\date{}
\newcommand\N{\mathbb{N}} % Entiers naturels
\newcommand\Z{\mathbb{Z}} % Entiers relatifs
\newcommand\R{\mathbb{R}} % Réels
\newcommand\C{\mathbb{C}} % Complexes
\newcommand\T{\mathbb{T}} % Tore
\newcommand\D{\mathbb{D}}% Disque unité
\newcommand\De{\mathbb{\Delta}_{\varepsilon}}
\newcommand\EE{\varepsilon}
\theoremstyle{plain}
\numberwithin{equation}{section}
\newtheorem{theorem}{Theorem}[section]
\newtheorem{proposition}[theorem]{Proposition}
\newtheorem{lemma}[theorem]{Lemma}
\newtheorem{remark}[theorem]{Remark}
\newtheorem*{Remarks}{Remarks}
  \title[]{Relative equilibria with holes for the  surface quasi-geostrophic equations}
 \author[C. Renault]{Coralie Renault}
\address{IRMAR, Universit\'e de Rennes 1\\ Campus de
Beaulieu\\ 35~042 Rennes cedex\\ France}
\email{coralie.renault@univ-rennes1.fr}
\begin{document}

\begin{abstract}
We study the existence of doubly connected rotating patches for the inviscid  surface quasi-geostrophic equation left open in \cite{16}.  By using the approach proposed by \cite{4} we also prove that   close to the annulus the boundaries  are actually analytic curves.   \end{abstract}

%\newpage 

\maketitle{}
%\tableofcontents

\section{Introduction}
In this paper we investigate the surface quasi-geostrophic $(\hbox{SQG})$ model  which describes the evolution of the potential temperature $\theta$ according to  the transport equation,
\begin{equation}\label{sqg}
\left\lbrace
\begin{array}{l}
\partial_t \theta + u \cdot \nabla\theta=0, \text{ }(t,x)\in \R_+ \times \R^2, \\
u=-\nabla^\perp(-\Delta)^{-\frac{1}{2}}\theta,\\
\theta_{|t=0}=\theta_0
\end{array}
\right.
\end{equation}
where $u$ refers to the velocity field and $\nabla^\perp=(-\partial_2,\partial_1)$. The  operator   $(-\Delta)^{-\frac{1}{2}}$ is  defined as follows
$$
(-\Delta)^{-\frac{1}{2}}\theta(x)=\frac{1}{2\pi}\int_{\mathbb{R}^2}\frac{ \theta(y)}{|x-y|}dy.
$$

This model is used to study the atmospheric  circulations near the tropopause and the ocean dynamics in the upper layers,    see for instance  \cite{13,21,23}.
This nonlinear  transport equation is more singular than  the vorticity equation for the  2D Euler equations where   the connection between the velocity and the vorticity is given by  the Biot-Savart law
 \[u=-\nabla^\perp(-\Delta)^{-1}\theta.\]
Another  model appearing in the literature which interpolates  between the $(\hbox{SQG})$  and Euler equations  is the $(\hbox{SQG})_\alpha$ model, see \cite{9}, where  the velocity is given by
 \[u=-\nabla^\perp(-\Delta)^{-1+\frac{\alpha}{2}}\theta,\quad \alpha \in (0,2).\]
These equations have been intensively studied during the past few decades and  abundant results have been established   in different topics such as  the well-posedness problem or  the vorticity dynamics. For instance, it is well-known that for Euler equations when  the initial data  $\theta_0$ belongs to $  L^{\infty} \cap L^{1}$  then there is  a unique global weak solution $\theta \in L^{\infty}(\R^+;  L^{\infty} \cap L^{1})$. This theory   fails for  $\alpha> 0$ due to the singularity of the kernel. 
However, the local well-posedness can be elaborated in  the sub-class of the vortex patches as it was shown in \cite{6} and \cite{14}. Recall that  an  initial datum  is a vortex patch when it takes the form $ \chi_D$, which  is the characteristic function of a smooth bounded domain $D$. The solutions keep this structure for a short time, that is, $\theta(t)=\chi_{D_t}$ where $D_t$ is another  domain describing the deformation of the initial one in the complex plane. 
The global existence of these solutions  is an outstanding open problem except for Euler equations in which case Chemin proved in  \cite{7}  the persistence of smooth regularity globally in time.  Note that a significant progress towards settling this problem, for $\alpha$ enough close to zero,  has been done recently in \cite{22bis}. Another direction related to the construction of  periodic global solutions through the bifurcation theory  has been recently investigated.  They correspond to rotating patches also called V-states or relative equilibria. In this setting the  domain of the patch is explicitly given by a pure rotation with uniform angular  velocity, that is,  $D_t=R_{x_0,\Omega t}D$ where $R_{x_0,\Omega t}$ is the planar rotation with the center $x_0$ and the  angle $\Omega t$; the parameter $\Omega$ is the angular velocity.  The  first example of rotating patches goes back for  Euler equation to   Kirchhoff who  discovered that an ellipse of semi-axes $a$ and $b$ rotates uniformly with the angular velocity $\Omega= \frac{ab}{(a^2+b^2)}$; see for instance \cite[p304]{1} and \cite[p 232]{24}. One century later, Deem and Zabusky gave in \cite{11} numerical evidence of the existence of the V-states with $m-$fold symmetry for each integer $m\in \big\{2,3,4,5\big\}$ and afterwards Burbea gave an analytically proof  in \cite{2}. The main idea of the demonstration is to reformulate the V-states equations with the contour dynamics equations, using the conformal parametrization $\Phi$, and to implement some bifurcation arguments. The bifurcation from the ellipses  to countable curves of non symmetric rotating patches  was discussed numerically and analytically in \cite{4,20,21b}. On the other hand we point out that the extension of this study to the $(\hbox{SQG})_\alpha$ was successfully carried out  in \cite{3,15}. Moreover the boundary regularity was achieved in \cite{3,4,20}. \\
The existence of V-states with one hole, also called doubly connected V-states, has been recently explored in \cite{16,17}. To fix the terminology, a patch $\theta_0= \chi_D$ is said to be doubly connected if the domain $D=D_1 \setminus D_2$ with $D_1$ and $D_2$  being two simply connected bounded domains such that the closure $\overline{D_2}$ is strictly embedded in $ D_1$.  The first result on the existence of m-fold symmetric V-states bifurcating from the annulus $\mathbb{A}_b=\lbrace z; b< \vert z \vert <1 \rbrace$ is established in \cite{16}. Roughly speaking, it is shown that for  higher modes $m$ there exist two branches of m-fold symmetric doubly connected  V-states bifurcating from the annulus at explicit eigenvalues $\Omega_m^{\pm}.$ Similar result with more involved computations was obtained for $(\hbox{SQG})_{\alpha}$ model with $ \alpha \in [0,1)$, see \cite{17}. Actually, it is shown that for given $\alpha \in [0,1)$ and $b\in (0,1)$, there exists $N \in \N$ such that for each $m\geq N$ there exists two  curves of $m$-fold doubly connected V-states bifurcating from the annulus $\mathbb{A}_b$ at the angular velocities
\[\Omega_m^{\pm}=\frac{1}{2} \left((1-b^{-\alpha})S_m+(1-b^2){\Lambda}_1(b)\right) \pm \frac{1}{2} \sqrt{{\Delta}_m(\alpha,b)}\]
with
\[{\Delta}_m(\alpha, b)=\left((b^{-\alpha}+1)S_m-(1+b^2){\Lambda}_1(b)\right)^2-4b^2{\Lambda}_m^2(b),\]
\[ {\Lambda}_m(b)\triangleq \frac{1}{b}\int_0^{+ \infty} J_m(bt) J_m(t) \frac{dt}{t^{1-\alpha}}\]
and
\[S_m \triangleq {\Lambda}_{1}(1)-{\Lambda}_m(1). \]
Where $J_m$ refers to the Bessel function of the first kind.\\
The main goal of this paper is to study the same problem for the SQG equation $(\ref{sqg})$ corresponding to $\alpha=1.$ Our aim is twofolds. First we shall establish the existence of doubly connected V-states and second we shall prove that the boundary is analytic. 
The main result of this paper reads as follows.
\begin{theorem}\label{existence}
Let  $b \in (0,1)$, there exists $N \in \N^{\star}\setminus{\lbrace 1\rbrace}$ with the following property: For any integer $m\geq N$ there exist two analytic curves of m-fold doubly connected V-states  for \eqref{sqg} bifurcating from the annulus $\mathbb{A}_b=\lbrace z \in \C, b < \vert z \vert < 1 \rbrace $ at the angular velocities
\begin{equation}\label{Omega}
\Omega_m^{\pm}=\frac{1}{2}\Big[\Big(1-\frac{1}{b}\Big)S_m+(1-b^2)\Lambda_1(b)\Big] \pm \frac{1}{2} \sqrt{\Delta_m(b)}
\end{equation}
where $S_m,\Lambda_m$ and $ \Delta_m$ are defined above by taking $\alpha=1.$
%\[\Delta_m(b)=\Bigg(\left(\frac{1}{b}+1\right)S_m-(1+b^2)\Lambda_1(b)\Bigg)^2-4b^2\Lambda_m^2(b).\]
\end{theorem}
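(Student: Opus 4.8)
The plan is to follow the now-standard bifurcation scheme, adapting to $\alpha=1$ the analysis carried out for $\alpha\in[0,1)$ in \cite{17} and for Euler in \cite{16}. First I would parametrize the two boundary curves of the doubly connected patch by
\[
z_j(w)=b_j\,w\Bigl(1+\sum_{n\ge 1}a_{n,j}\,w^{-n}\Bigr),\qquad w\in\mathbb{T},\quad j=1,2,
\]
with $b_1=1$, $b_2=b$, the coefficients $a_{n,j}$ real (to get the reflection symmetry) and $m$-fold symmetric (only powers of $w^{-m}$ survive). Writing the rotating-patch equation for \eqref{sqg} in contour-dynamics form, the condition that $\theta_0=\chi_D$ rotates with angular velocity $\Omega$ becomes a pair of functional equations $G_j(\Omega,z_1,z_2)=0$ on a neighborhood of the annulus in a product of Hölder spaces $C^{1+\beta}(\mathbb{T})$; here the $\alpha=1$ kernel $|x-y|^{-1}$ enters through the stream function, and one must check that the boundary terms still define $C^1$ (indeed smooth) maps between the chosen function spaces — this is the point where the extra singularity relative to $\alpha<1$ has to be handled, typically by the kernel-splitting and potential-theory estimates used in \cite{3,4}. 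The annulus $z_j(w)=b_j w$ is a trivial solution for every $\Omega$, so we look for nontrivial branches bifurcating off it.

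Next I would compute the linearized operator $D_{(z_1,z_2)}G(\Omega,\mathrm{id})$ at the annulus. Expanding in Fourier modes, this operator is diagonal in $m$ and acts on the $m$-th mode as a $2\times 2$ matrix $M_m(\Omega)$ whose entries involve $S_m$, $\Lambda_1(b)$, $\Lambda_m(b)$ exactly as in the $(\hbox{SQG})_\alpha$ computation with $\alpha=1$ (the integrals $\Lambda_m(b)=\frac1b\int_0^\infty J_m(bt)J_m(t)\,\frac{dt}{t^{1-\alpha}}$ converge for $\alpha=1$ because $J_m(t)=O(t^{-1/2})$ at infinity, and $b<1$ kills the singularity of $J_m(bt)J_m(t)$ coming from the $t=\infty$ oscillation). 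The kernel of the linearization is one-dimensional precisely when $\det M_m(\Omega)=0$, i.e. when $\Omega$ solves the quadratic giving $\Omega_m^\pm$ in \eqref{Omega}; the discriminant $\Delta_m(b)\ge 0$ (for $m$ large) guarantees the two eigenvalues are real and, for $m\ge N$, distinct, so each $\Omega_m^\pm$ yields a simple eigenvalue. One then verifies the transversality (non-degeneracy) condition $\partial_\Omega \det M_m(\Omega_m^\pm)\ne 0$ and that the range of the linearized operator has codimension one, so that the Crandall–Rabinowitz theorem applies and produces a local $C^1$ (in fact, by elliptic/analytic bootstrap, analytic) curve of nontrivial $m$-fold doubly connected V-states through $(\Omega_m^\pm,\text{annulus})$.

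For the analyticity of the bifurcating curves and of the patch boundaries themselves — the second half of the theorem — I would invoke the approach of \cite{4}: rather than Crandall–Rabinowitz in a Hölder space, set the problem in a Banach space of analytic functions on an annulus $\{r<|w|<1/r\}$ (or use the analytic implicit function theorem after checking the nonlinear functional $G$ is analytic as a map between such spaces), which is possible because the singular integral defining $G$ depends analytically on the conformal parameters once the curves stay close to circles. Then the bifurcation curve is analytic in the bifurcation parameter, and in particular each $z_j(w)$ extends holomorphically past $\mathbb{T}$, so the boundary $\partial D_j$ is a real-analytic Jordan curve. The main obstacle, as noted, is the first step: controlling the $|x-y|^{-1}$ (or its derivative) kernel so that $G$ is a well-defined $C^1$, and then analytic, map — in particular showing the "diagonal" part of the boundary operator is regularizing enough and the nonlocal interaction term between the two boundaries is smooth — and then pushing through the somewhat delicate explicit evaluation of $D G$ at the annulus to recover the stated eigenvalues $\Omega_m^\pm$ with the correct $S_m,\Lambda_1(b),\Lambda_m(b),\Delta_m(b)$ at $\alpha=1$.
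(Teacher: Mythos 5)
Your overall strategy is the paper's strategy (conformal parametrization of the two interfaces, contour-dynamics functionals $G_j$, explicit linearization at the annulus in terms of $S_m,\Lambda_1(b),\Lambda_m(b)$, Crandall--Rabinowitz, analyticity via the spaces of \cite{4}), but the way you order the functional-analytic setup contains a genuine gap. You propose to run Crandall--Rabinowitz first in H\"older spaces $C^{1+\beta}(\mathbb{T})$ and only afterwards pass to analytic spaces to upgrade regularity. At $\alpha=1$ this first step would fail: the linearized operator at the annulus acts on the $n$-th mode through the matrix $M_n$ whose entries contain $S_n\sim\frac{2}{\pi}\log n$, so the functional loses a full derivative \emph{and} a logarithm. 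In the H\"older scale used for $\alpha\in[0,1)$ in \cite{16,17} one can absorb a fractional loss $n^{\alpha}$, but not $n\log n$, and one cannot exhibit a target space in that scale in which the range of the linearization is closed of codimension one; this is exactly why the case $\alpha=1$ was left open in \cite{16}. The paper's actual resolution is to work from the outset in the spaces $X^{k+\log}\to Y^{k-1}$ of functions holomorphic in $\{|z|>\varepsilon\}$ (resp. the annulus $C_\varepsilon$), whose Fourier characterization carries the weight $\varepsilon^{-2n}n^{2k}(1+\log n)^2$; the $C^1$ regularity of $G$ in these spaces (Sections 3--4, via holomorphic extension of the kernels $g(\tau,z)$ and the splitting of Lemma 4.3) is the main technical content, and the analyticity of the boundary is then automatic rather than obtained by a separate bootstrap. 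So the "H\"older first, analytic later" ordering inverts the logic and the first half does not go through as stated.

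A second, smaller gap: "the kernel of the linearization is one-dimensional precisely when $\det M_m(\Omega)=0$" is not correct as stated. In the $m$-fold symmetric spaces the linearization is a multiplier over \emph{all} harmonics $nm$, $n\geq 1$, so besides $\det M_m(\Omega_m^{\pm})=0$ one must also rule out $\det M_{nm}(\Omega_m^{\pm})=0$ for every $n\geq 2$; likewise the codimension-one range requires inverting $M_{nm}$ with controlled growth. The paper obtains both from the strict monotonicity of the eigenvalue sequences $n\mapsto\lambda_n^{\pm}$ (Proposition 5.2), which is also what defines the threshold $N$ (the first index with $\Delta_n>0$, i.e. $E_n(b)>0$); this step is absent from your outline. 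Your transversality criterion $\partial_\Omega\det M_m(\Omega_m^{\pm})\neq 0$ is, in this $2\times 2$ multiplier setting, equivalent to the simplicity condition $\Delta_m>0$ used in the paper, so that part is consistent, as is your observation that membership of the solution in the analytic function spaces yields real-analytic boundary curves.
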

\begin{Remarks}
\begin{itemize}
\item For $\alpha=1$, the expression of $S_m$ can be simplified
 and takes the form
 \[S_n = \frac{2}{\pi}\sum_{k=1}^{n-1}\frac{1}{2k+1}\]
\item As we shall see later in the proofs, the number N is defined as the smallest integer such that 
\[ S_N > b \left(\frac{1+b^2}{1+b}\Lambda_1(b)+\frac{2b}{1+b}\Lambda_N(b)\right).\]
\item Our results are in line with results foretold in \cite{16}.
\end{itemize}
\end{Remarks}
Now we shall sketch  the proof of Theorem $\ref{existence}$ which relies on Crandall-Rabinowitz's theorem applied in suitable Banach spaces that capture the analyticity of the boundary. We mention that these spaces were introduced in \cite{4} in order to study  the simply connected V-states.  The first step is to write the boundary equations using the  exterior conformal parametrization of the domains $D_1$ and $D_2$. These conformal mappings  $\Phi_j : \D^c  \rightarrow D_j^c$  have  the following structure
\[ \forall \vert z \vert > 1 \text{, }\Phi_1(z)=z+\sum_{n\in \N} \frac{a_n}{z^{n}} \quad\text{ and  }\quad \Phi_2(z)=bz+\sum_{n\in \N} \frac{c_n}{z^{n}}.\]
 with $\D$ being the unit closed disc.  The Fourier coefficients are supposed to be real meaning  that we look only for the V-states which are symmetric with respect to the real axis. Notice also that when the boundaries  are assumed to be enough smooth then the $\Phi_j$ admit  unique univalent extension up the boundary.
%Furthermore,  we assume that the Fourier coefficients are real which means that we look only for the V-states which are symmetric with respect to the real axis. Moreover, using the subordination principe we deduce that $b \in (0,1)$; the parameter $b$ coincides with the small radius of the annulus that we sligthly perturb. 
We recall from Section $\ref{Boundary equations}$ that the boundaries of the V-states are subject to the equations: For $j \in \lbrace 1,2 \rbrace$ and  $\omega \in \T$
\begin{align*}
{G}_j(\Omega,\Phi_1,\Phi_2)(\omega)&\triangleq  \textnormal{Im} \left\lbrace  \Big( \Omega\Phi_j(\omega)- S(\Phi_1,\Phi_j)(\omega)+ S(\Phi_2,\Phi_j)(\omega) \Big)\overline{\Phi_j'(\omega)}\overline{\omega} \right\rbrace \\
&=0 
\end{align*}
with
\[ S(\Phi_i,\Phi_j)(\omega)=\fint_{\T} \frac{\tau\Phi_i'(\tau)-\omega \Phi_j'(\omega)}{\vert \Phi_i(\tau)-\Phi_j(\omega)\vert }\frac{d\tau}{\tau}. \]
To apply the bifurcation arguments we make use of the Banach spaces $X^{k+\log}$ and $Y^{k-1}$ that will be fully described in  the subsection \ref{functionspaces}. The main difficulty is to show that the functionals $G_j$ send a  small neighborhood in $X^{k+\log}$ of the trivial solution $(\hbox{Id}, b\hbox{Id})$ to the space $Y^{k-1}.$ This will be done carefully in Section \ref{RegulaT} where additional regularity properties will also be established. The second step is to compute explicitly  the linearized operator of  the vectorial functional $G=(G_1,G_2)$ at the annular solution $(\hbox{Id},b\,\hbox{Id})$. This part is very computational and after using special structures of  the Gauss hypergeometric functions we obtain the following compact expression: Given 
\[h_1(\omega) =\sum_{n=1}^{+ \infty} a_n\overline{\omega}^n \quad \text{ and } \quad h_2(\omega) =\sum_{n=1}^{+ \infty} c_n\overline{\omega}^n \text{, } \omega \in \T \]
we get
 \[D{G}(\Omega,\hbox{Id},b\,\hbox{Id})(h_1,h_2)(\omega)=\frac{i}{2}\sum_{n\geq 1}(n+1)M_{n+1}\left(\begin{array}{c}
a_n \\ 
c_n
\end{array} \right)\left(\omega^{n+1}-\overline{\omega}^{n+1} \right)\]
where the matrix $M_n$ is given for $n\geq 2$ by
\[M_n \triangleq \left(\begin{array}{cc}
\Omega-S_n+b^2\Lambda_1(b) & -b^2\Lambda_n{b} \\ 
b\Lambda_n(b) & b\Omega+S_n-b\Lambda_1(b)
\end{array}\right) .\]
With this explicit formula in hand we find  the values of $\Omega$ leading  to a one dimensional kernel operator. We also check the full conditions required by the Crandall-Rabinowitz's theorem. This discussion will be investigated in detail  in Section \ref{Linea12}.
   
 In what follows, we will need some notations:
 \begin{itemize}
 \item[$\bullet$] The unit disc  and its boundary will be denoted respectively by $\D$ and $\T$.
 \item[$\bullet$] The disc of r radius and centered in $0$ and its boundary will be denoted by $\D_r$ and  $\T_r$.
 \item[$\bullet$] We denote by C any positive constant that may change from line to line.
 \item[$\bullet$] Let $f :\T \rightarrow \C $ be a continuous function. We define its mean value by,
 \[ \fint_{\T} f(\tau ) d\tau \triangleq \frac{1}{2i\pi} \int_{\T} f(\tau) d \tau, \]
 where $d\tau $ stands for the complex integration.
 \item[$\bullet$] Let be $X$ and $Y$ be two normed spaces. We denote by $\mathcal{L}(X,Y)$ the space of all continuous linear maps $T:X \rightarrow Y$ endowed with its usual strong topology.
 \item[$\bullet$] Let $Y$ be a vector space and $R$ be  a subspace, then $Y /R $ denotes the quotient space.
 \end{itemize}
\section{Boundary equations}\label{Boundary equations}
We intend in this section to write down the equations governing the V-states in the doubly connected case. But before doing that we shall recall the  Riemann mapping theorem. To restate this result we need to recall the definition of  \textit{simply connected} domains. Let $\widehat{\C} \triangleq \C \cup \lbrace \infty \rbrace$ denote the Riemann sphere, we say that a domain $U \subset \widehat{\C}$ is \textit{simply connected} if the set $\widehat{\C}\setminus U$ is connected.
\begin{theorem}[Riemann Mapping Theorem]
Let $\D$ denote the unit open ball and $ U \subset \C$ be a simply connected bounded domain. Then there is a unique bi-holomorphic map called also conformal, $\Phi: \C \setminus \overline{\D} \rightarrow \C \setminus \overline{U}$ taking the form 
\[ \Phi(z)=az+\sum_{n \in \N} \frac{a_n}{z^n} \quad \text{ with }\quad a>0.\]
\end{theorem}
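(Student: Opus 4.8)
The plan is to deduce this exterior normalized version from the classical (interior) Riemann Mapping Theorem by conjugating with the inversion $\sigma(z)=1/z$, which is a conformal involution of the Riemann sphere $\widehat{\C}$ exchanging $0$ and $\infty$. Since $U$ is bounded I would first translate so that $0\in U$; such a translation changes a candidate map only by an additive constant, which is precisely the term $a_0$ (the index $n=0$) in the expansion $\sum_{n\in\N}a_n/z^n$, so it does not affect the stated form and can be undone at the end. Fixing $r>0$ with $\overline{\D_r}\subset U$, I set
\[ V \;\triangleq\; \sigma\big(\widehat{\C}\setminus\overline{U}\big)\subset\C, \]
the image under inversion of the exterior of $U$.

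Next I would check that $V$ meets the hypotheses of the interior theorem. Since $\sigma$ is a homeomorphism of $\widehat{\C}$ and $\widehat{\C}\setminus\overline{U}$ is the exterior domain of $U$ (it contains $\infty$), its image $V$ is a domain containing $\sigma(\infty)=0$. It is bounded, because the exterior is disjoint from $\overline{\D_r}\subset\overline U$, whence $V\subset\D_{1/r}$. It is simply connected in the sense of the paper, because
\[ \widehat{\C}\setminus V \;=\; \sigma\big(\overline{U}\big) \]
is the continuous image of the connected set $\overline{U}$ and is therefore connected. The classical interior Riemann Mapping Theorem, applied to the bounded simply connected proper subdomain $V\subsetneq\C$, then furnishes a unique conformal map $\psi:\D\to V$ normalized by $\psi(0)=0$ and $\psi'(0)>0$.

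I would then build $\Phi$ by transporting $\psi$ back through the inversion, setting
\[ \Phi(z)\;\triangleq\;\frac{1}{\psi(1/z)}\;=\;(\sigma\circ\psi\circ\sigma)(z),\qquad |z|>1. \]
For $|z|>1$ one has $1/z\in\D\setminus\{0\}$, where $\psi$ is holomorphic and nonvanishing (since $\psi$ is injective with $\psi(0)=0$), so $\Phi$ is holomorphic on $\C\setminus\overline{\D}$; as a composition of the conformal bijections $\sigma$ and $\psi$ it is a conformal bijection onto $\sigma(V\setminus\{0\})=\C\setminus\overline{U}$. Writing $\psi(w)=\psi'(0)\,w+O(w^2)$ near $0$ and substituting $w=1/z$ yields, for large $|z|$,
\[ \Phi(z)=\frac{1}{\psi'(0)}\,z+a_0+\sum_{n\geq1}\frac{a_n}{z^n}, \]
which is exactly the required form with $a=1/\psi'(0)>0$; re-adding the translation constant preserves this shape.

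Finally, for uniqueness I would note that $\Phi\mapsto\psi$, with $\psi(w):=1/\Phi(1/w)$, is a bijection between the maps of the stated form and the normalized interior maps $\D\to V$: indeed $\psi(0)=0$, and from $\Phi(1/w)\sim a/w$ one gets $\psi'(0)=1/a>0$. The uniqueness of $\psi$ in the interior theorem therefore forces the uniqueness of $\Phi$; equivalently, any two admissible maps differ by a conformal automorphism of $\C\setminus\overline{\D}$ fixing $\infty$, i.e.\ a rotation $z\mapsto e^{i\theta}z$, and the positivity of the leading coefficient $a$ forces $\theta=0$. The one genuinely delicate point is the topological reduction of the second paragraph---that passing from $U$ to the exterior of $\overline{U}$ preserves connectedness and simple connectedness, so that $V$ is a legitimate bounded simply connected domain. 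This is where the regularity of $\partial U$ enters (it is automatic for the smoothly bounded patch domains considered here, which guarantees that $\overline{U}$ is a full continuum with connected complement); I expect this to be the main obstacle, the remaining steps being a formal transfer of the classical result.
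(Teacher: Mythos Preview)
Your derivation of the exterior normalized Riemann map from the classical interior theorem via the inversion $\sigma(z)=1/z$ is correct and is the standard way this statement is obtained; the normalization bookkeeping and the uniqueness argument are both sound.

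However, there is nothing to compare with: the paper does not prove this theorem. It is stated as a classical background result and immediately followed by a remark referring to \cite{25,28} for boundary behaviour. So your proposal supplies an argument where the paper simply quotes the statement. One small comment on the delicate point you flag: for a \emph{general} bounded simply connected domain $U$ (in the sense that $\widehat{\C}\setminus U$ is connected), the set $\C\setminus\overline{U}$ need not be connected, so the target of $\Phi$ in the statement should really be understood as the unbounded component of $\C\setminus\overline{U}$; your parenthetical remark that the smooth patch domains of the paper avoid this issue is exactly right, and is why the paper can afford to state the theorem in this slightly loose form without further comment.
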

 Notice that in this theorem the regularity of the boundary has no effect regarding the existence of the conformal mapping but it contributes in the boundary behavior of the conformal mapping, see for instance \cite{25,28}. \\
Next, we shall move to the equations governing the boundary of the doubly connected V-states.  This can be done in the spirit of the paper \cite{16}. Assume that $\theta_0= \chi_D$ is a rotating patch with an angular velocity $\Omega$ and  such that   $D=D_{1}\setminus D_{2}$ is a doubly connected domain meaning that $D_{1}$ and $D_{2}$ are two simply connected bounded  domains with $D_{2} \subset D_{1}$. Denote by $\Gamma_1$ and $\Gamma_2$ their boundaries, respectively. Then following the same lines of \cite{16} we find that the exterior  conformal mappings $\Phi_1$ and $\Phi_2$ associated to $D_1$ and $D_2$ satisfy the coupled nonlinear equations: For $j\in\{1,2\}, \omega\in\T,$
\begin{eqnarray}\label{tildeG_j}
\nonumber\tilde{G}_j(\Omega,\Phi_1,\Phi_2)(\omega)&\triangleq&  \textnormal{Im} \left\lbrace  \Big( \Omega\Phi_j(\omega)- S(\Phi_1,\Phi_j)(\omega)+ S(\Phi_2,\Phi_j)(\omega) \Big)\overline{\Phi_j'(\omega)}\overline{\omega} \right\rbrace \\
&=&0 
\end{eqnarray}
with
\[ S(\Phi_i,\Phi_j)(\omega)=\fint_{\T} \frac{\tau\Phi_i'(\tau)-\omega \Phi_j'(\omega)}{\vert \Phi_i(\tau)-\Phi_j(\omega)\vert }\frac{d\tau}{\tau}. \]
Notice that we aim at finding   V-states which are small perturbation of the annulus $\mathbb{A}_b$ with $b \in (0,1)$ and therefore the conformal mappings take the form,
\[\forall |z|\geq 1,\quad \Phi_1(z)=z+f_1(z)=z + \sum_{n=1}^{+ \infty} \frac{a_n}{z^n}  \]
and
\[\Phi_2(z)=bz+f_2(z)= b z+ \sum_{n=1}^{+ \infty} \frac{b_n}{z^n}. \]

We shall introduce the functionals
\begin{equation}\label{V-state}
G_j(\Omega,f_1,f_2)\triangleq \tilde{G}_j(\Omega,\Phi_1,\Phi_2) \text{ } j=1,2.
\end{equation}
Then equations of the V-states become,
\[\forall \omega \in \T \text{, } G_j(\Omega,f_1,f_2)(\omega)=0 \text{, } j=1,2.\]
Now we can check that the annulus is a rotating patch for any $\Omega \in \R$. Indeed, 
\[ G_1(\Omega,0,0)(\omega)=\textnormal{Im} \left\lbrace- \overline{\omega} \fint_{\T} \frac{\tau-\omega }{\vert \tau-\omega\vert }\frac{d\tau}{\tau}+ \overline{\omega} \fint_{\T} \frac{b\tau-\omega }{\vert b\tau-\omega\vert }\frac{d\tau}{\tau}\right \rbrace.\]
Using the change of variable $\tau =\omega \xi$ in the last equation we obtain:
\[ {G}_1(\Omega,0,0)(\omega)=\textnormal{Im} \left\lbrace-  \fint_{\T} \frac{\xi-1 }{\vert \xi-1\vert }\frac{d\xi}{\xi}+ \fint_{\T} \frac{b\xi-1 }{\vert b\xi-1\vert }\frac{d\xi}{\xi}\right \rbrace. \]
Now we just observe  that each integral is real. In fact using the parametrization $\xi =e^{i\eta}$ one gets,
\[ \forall a \in (0,1] \text{, } \overline{ \fint_{\T} \frac{a\xi-1 }{\vert a\xi-1\vert }\frac{d\xi}{\xi} }={\frac{1}{2 \pi} \int_0^{2 \pi}\frac{ae^{-i\eta}-1 }{\vert ae^{-i\eta}-1\vert} d\eta}.\]
It suffices now to make again the change of variables  $\eta\mapsto-\eta$.
%\[ \forall a \in ]0,1] \text{, } \overline{\fint_{\T} \frac{a\xi-1 }{\vert a\xi-1\vert }\frac{d\xi}{\xi}}=\fint_{\T} \frac{a\xi-1 }{\vert a\xi-1\vert }\frac{d\xi}{\xi}\]
Hence we find,
\begin{equation*}
\forall \omega \in \T, {G}_1(\Omega,0,0)(\omega)=0.
\end{equation*}
Arguing similarly we also  get 
\begin{equation*}
\forall \omega \in \T, {G}_2(\Omega,0,0)(\omega)=0.
\end{equation*}
\section{Tools}
In this section, we shall gather some useful results that we shall use throughout the paper. First, we  will recall  the Crandall-Rabinowitz's theorem which is the key tool of the proof of our main result. Second, we  shall introduce  different  basic Banach spaces needed in the bifurcation. Last, we  shall collect some important properties on   special functions and  which are  helpful in the subsection $\ref{Linearized operator}$ to get compact formula for the linearized operator.\\
\subsection{Crandall-Rabinowitz's theorem}
We intend now to recall Crandall-Rabinowitz's theorem which is an important  tool in the bifurcation theory and will be used in the proof of Theorem $ \ref{existence} $. Let $ F:\R \times X \rightarrow Y$ be a continuous function with $X$ and $Y$ being two Banach spaces. Assume that $F(\lambda,0)=0$ for any $\lambda\in \R$. Whether or not  close to a trivial solution $(\lambda_0,0)$ one may find a branch of non trivial solutions of the equation $F(\lambda,x)=0$ is the main concern of the bifurcation theory. The following theorem provides sufficient conditions for the bifurcation based on the structure of the linearized  operator  at the point $(\lambda_0,0)$. For more details we refer to  \cite{10, 22}. 
\begin{theorem}\label{CR}
Let $X$,$Y$ be two Banach spaces, V a neighborhood of $0$ in $X$ and let $F:\R \times X \rightarrow Y$ with the following properties:
\begin{itemize}
\item[1] $F(\lambda,0)=0$ for any $\lambda\in \R$.
\item[2] The partial derivatives $F_{\lambda}$, $F_x$ and $F_{\lambda x}$ exist and are continuous.
\item[3] $\textnormal{Ker}(\mathcal{L}_0)$ and $Y/ \textnormal{Im}(\mathcal{L}_0)$ are one-dimensional.
\item[4] Transversality assumption: $ \partial_{\lambda}\partial_x F(0,0)x_0 \notin \textnormal{Im}(\mathcal{L}_0)$, where
\[\textnormal{Ker}(\mathcal{L}_0)=span(x_0) \text{, } \mathcal{L}_0\triangleq \partial_xF(0,0).\]
\end{itemize}
If Z is any complement of $\textnormal{Ker}(\mathcal{L}_0)$ in $X$, then there is a neighborhood $U$ of $(0,0)$ in $\R \times X$, an interval $(-a,a)$, and continuous functions $\phi : (-a,a) \rightarrow \R$, $\psi : (-a,a) \rightarrow Z$ such that $\phi(0)=0$,$\psi(0)=0$ and 
\[F^{-1}(0)\cap U = \Big\lbrace \big(\phi(\xi),\xi x_0+ \xi \psi(\xi) \big); \vert \xi \vert <a \Big\rbrace \cup \Big\lbrace (\lambda,0); (\lambda,0) \in U \Big\rbrace \]
\end{theorem}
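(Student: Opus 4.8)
\emph{Plan of proof.} The route is the classical Lyapunov--Schmidt reduction: the four hypotheses are exactly what is needed in order to collapse the equation $F(\lambda,x)=0$, in a neighbourhood of $(0,0)$, to a single scalar equation whose structure is transparent. By hypothesis~(3), $\mathrm{Ker}(\mathcal{L}_0)=\mathrm{span}(x_0)$ has a closed complement $Z$ in $X$ (the space $Z$ of the statement), and $\mathrm{Im}(\mathcal{L}_0)$ is a closed subspace of $Y$ of codimension one; I would fix a one-dimensional complement $W$, the continuous projection $Q\colon Y\to\mathrm{Im}(\mathcal{L}_0)$ with $\mathrm{Ker}(Q)=W$, and a nonzero functional $\ell\in Y^{*}$ with $\mathrm{Ker}(\ell)=\mathrm{Im}(\mathcal{L}_0)$. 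The open mapping theorem makes $\mathcal{L}_0|_{Z}\colon Z\to\mathrm{Im}(\mathcal{L}_0)$ an isomorphism. Writing $x=\xi x_0+z$ with $\xi\in\R$ and $z\in Z$, the equation $F(\lambda,x)=0$ is equivalent to the system
\[
Q\,F(\lambda,\xi x_0+z)=0,\qquad \ell\bigl(F(\lambda,\xi x_0+z)\bigr)=0,
\]
the first being the auxiliary equation and the second the bifurcation equation.

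First I would solve the auxiliary equation. The map $\Psi(\lambda,\xi,z)\triangleq Q\,F(\lambda,\xi x_0+z)$ is $C^1$ by hypothesis~(2), vanishes at $(0,0,0)$, and satisfies $\partial_{z}\Psi(0,0,0)=\mathcal{L}_0|_{Z}$, an isomorphism; the implicit function theorem then produces a $C^1$ map $z=z(\lambda,\xi)$ near $(0,0)$, locally unique, with $z(0,0)=0$. Since $F(\lambda,0)=0$ for every $\lambda$ (hypothesis~(1)), local uniqueness forces $z(\lambda,0)=0$, hence $z(\lambda,\xi)=\xi\,\zeta(\lambda,\xi)$ with $\zeta$ continuous; differentiating the auxiliary equation in $\xi$ at $(0,0)$ and using $\mathcal{L}_0 x_0=0$ together with the injectivity of $\mathcal{L}_0|_{Z}$ gives $\partial_{\xi}z(0,0)=0$, i.e. $\zeta(0,0)=0$.

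Next I would reduce the bifurcation equation to a scalar one. Set $g(\lambda,\xi)\triangleq\ell\bigl(F(\lambda,\xi x_0+z(\lambda,\xi))\bigr)$, so that $F^{-1}(0)$ near $(0,0)$ is described by $z=z(\lambda,\xi)$ together with $g(\lambda,\xi)=0$. Because $F(\lambda,0)=0$ and $z(\lambda,0)=0$ one has $g(\lambda,0)=0$, whence $g(\lambda,\xi)=\xi\,h(\lambda,\xi)$ with $h$ continuous; using the identity $F(\lambda,\xi x_0+z)=\int_{0}^{1}D_{x}F\bigl(\lambda,t(\xi x_0+z)\bigr)\cdot(\xi x_0+z)\,dt$ together with $\xi x_0+z=\xi(x_0+\zeta)$ yields the representation
\[
h(\lambda,\xi)=\ell\Bigl(\int_{0}^{1}D_{x}F\bigl(\lambda,t\xi(x_0+\zeta(\lambda,\xi))\bigr)\cdot(x_0+\zeta(\lambda,\xi))\,dt\Bigr),
\]
from which one reads off $h(0,0)=\ell(\mathcal{L}_0 x_0)=0$ and $\partial_{\lambda}h(0,0)=\ell\bigl(\partial_{\lambda}\partial_{x}F(0,0)\,x_0\bigr)$, the remaining term lying in $\mathrm{Im}(\mathcal{L}_0)=\mathrm{Ker}(\ell)$; by the transversality hypothesis~(4) this scalar is nonzero. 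Solving $h(\lambda,\xi)=0$ for $\lambda$ then gives a continuous function $\lambda=\phi(\xi)$ near $\xi=0$, locally unique, with $\phi(0)=0$, and therefore, near $(0,0)$,
\[
F^{-1}(0)\cap U=\bigl\{\bigl(\phi(\xi),\,\xi x_0+\xi\,\psi(\xi)\bigr)\colon|\xi|<a\bigr\}\cup\bigl\{(\lambda,0)\colon(\lambda,0)\in U\bigr\},
\]
where $\psi(\xi)\triangleq\zeta(\phi(\xi),\xi)\in Z$ is continuous with $\psi(0)=\zeta(0,0)=0$; this is the claimed description of the solution set.

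The step I expect to be the real obstacle is this last reduction, performed under the minimal regularity of hypothesis~(2): the scalar map $h$ involves a difference quotient of $F_{x}$, so care is needed to show that $h$ is regular enough --- in particular that $h(\lambda,0)$ is $C^1$ in $\lambda$ and that $h(\lambda,\xi)\to h(\lambda,0)$ uniformly in $\lambda$ as $\xi\to0$ --- to solve $h(\lambda,\xi)=0$ uniquely for $\lambda=\phi(\xi)$. This is precisely the technical core of the Crandall--Rabinowitz argument, which I would carry out along the lines of \cite{10,22}. Everything else is a routine combination of the implicit function theorem in Banach spaces with the open mapping theorem.
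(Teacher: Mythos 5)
Your Lyapunov--Schmidt reduction is correct and is precisely the classical Crandall--Rabinowitz argument: the paper itself states Theorem~\ref{CR} without proof, deferring to \cite{10,22}, and your sketch (auxiliary equation solved by the implicit function theorem via $\mathcal{L}_0|_Z$, factorization $g=\xi h$, transversality giving $\partial_\lambda h(0,0)=\ell\bigl(\partial_\lambda\partial_x F(0,0)x_0\bigr)\neq0$) is exactly the proof found in those references. The only delicate point, the regularity of $h$ needed to solve $h(\lambda,\xi)=0$ for $\lambda$, is correctly identified by you and is handled in \cite{10,22} using the continuity of $F_{\lambda x}$ assumed in hypothesis~2.
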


\subsection{Function spaces}\label{functionspaces}
 We shall see later the spaces that we shall introduce in this paragraph  will play a central role in the proof of our main theorem. They were first  devised in \cite{3} but with a different representation.
Let $ \varepsilon \in (0,1)$ and introduce the sets
$$ C_{\varepsilon}= \left\lbrace z \in \C \vert \,\,\varepsilon < \vert z \vert <\frac{1}{\varepsilon }\right\rbrace \quad\hbox{and}\quad 
\mathbb{\Delta}_{\varepsilon}=\Big\{ z \in \C \vert\,\, \varepsilon <\vert z \vert \Big\}.
 $$
We denote by $\mathcal{A}_\varepsilon$ the set of holomorphic functions $h$ on $\mathbb{\Delta}_{\varepsilon} $  and such that
$$
\forall z\in \De,\quad  h(z)=\sum_{n\geq1} h_n z^{-n}\quad\hbox{with}\quad  h_n\in \mathbb{R}.
$$
For $m\in \N$ we define $\mathcal{A}_\varepsilon^m$ as the set of functions $h\in \mathcal{A}_\varepsilon$ such that
$$
\forall z\in \De,\quad  h(z)=\sum_{n\geq1} h_n z^{-nm+1}.
$$
Let  $\widehat{\mathcal{A}}_\varepsilon$ be  the set of holomorphic functions $h$ on $ C_\EE$ with the property
\[ \forall z \in C_{\varepsilon} \text{, }h(z) = i\sum_{n=1}^{+ \infty} h_{n}(z^{n}-z^{-n}) \text{, } h_{n}\in \R. \]
For $m\in \N$ we define $\widehat{\mathcal{A}}_\varepsilon^m$ as the set of functions $h\in \widehat{\mathcal{A}}_\varepsilon$ such that,
\[ \forall z \in C_{\varepsilon} \text{, }h(z) = i\sum_{n=1}^{+ \infty} h_{n}(z^{nm}-z^{-nm}) \text{, } h_{n}\in \R. \]
Finally we denote by $\tilde{A}_\varepsilon$ the set of holomorphic functions on $C_{\varepsilon}$ and such that,
\[\forall z \in C_{\varepsilon} \text{, } h(z) = \sum_{n \in \Z} h_{n}z^{n} \text{ with } h_{n}\in \R. \]

 For $k\in\mathbb{N}$  we introduce the spaces,
 \begin{eqnarray*}
X^{k+\log}=  \Bigg\{ h\in \mathcal{A}_\EE, &&\int_{0}^{2\pi} \vert h(\varepsilon e^{i\theta})\vert^2 d\theta<+ \infty,\int_{0}^{2\pi}\vert( \partial_z^kh)(\varepsilon e^{i\theta})\vert^2 d\theta<+ \infty,\\
 &&\Big\| \int_{\T} \frac{(\partial_z^k h)(\varepsilon \tau)-(\partial_z^k h)(\varepsilon \cdot)}{\vert \tau -\cdot\vert} \frac{d\tau}{\tau}\Big\|_{L^2(\T)} < + \infty \Bigg\}
\end{eqnarray*}
and 
\begin{eqnarray*}
X^{k+\log}_m=  X^{k+\log}\cap \mathcal{A}_\varepsilon^m.
\end{eqnarray*}
We also define the spaces,
\[Y^{k-1} =\Bigg\{ h\in\widehat{\mathcal{A}}_\varepsilon \text{, }  \int_{0}^{2\pi} \vert h(\varepsilon e^{i\theta})\vert^2 d\theta<+ \infty \text{, }  \int_{0}^{2\pi}\vert (\partial_z^{k-1}h)( \varepsilon e^{i\theta})\vert^2 d\theta<+ \infty \Bigg\},\]
\[Y^{k-1}_m =Y^{k-1}\cap \widehat{\mathcal{A}}_\varepsilon^m\]
and
\begin{eqnarray*}
\tilde{Y}^{k-1} =\Bigg\{ h\in \tilde{A}_\varepsilon\text{, }&& \int_{0}^{2\pi} \vert h(\varepsilon e^{i\theta})\vert^2 d\theta<+ \infty \text{, }\int_{0}^{2\pi} \Big|h(\varepsilon^{-1}{ e^{i\theta}})\Big|^2 d\theta<+ \infty , \\
&&\int_{0}^{2\pi}\Big| (\partial_z^{k-1}h)({ \varepsilon e^{i\theta}})\Big|^2 d\theta<+ \infty
 \text{, } \int_{0}^{2\pi}\Big|(\partial_z^{k-1}h)( \varepsilon^{-1} e^{i\theta})\Big|^2 d\theta<+ \infty \Bigg\}.
\end{eqnarray*}
Next we shall be concerned with a characterization of the  space $X^{k+\log}$ space in terms of the Fourier coefficients. 
\begin{lemma}\label{caracterisation} Let $k\in \mathbb{N}$ and  $h\in \mathcal{A}_{\varepsilon}$ with $h(z)=\sum_{n\in\mathbb{N^*}}h_n z^{-n}$. Then $h\in X^{k+\log}$ if and only if 
\[\forall \omega\in \mathbb{T},\,h(\omega)= \sum_{n=1}^{+ \infty} h_{n} \overline{\omega}^{n}\quad\hbox{and}\quad   \|h\|_{X^{k+\log}}^2\approx \sum_{n=1}^{+ \infty} \frac{ h_{n}^2 }{\varepsilon^{2(n+k)}}n^{2k} (1+\log(n))^2.\]
\end{lemma}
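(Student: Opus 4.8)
The plan is to push each of the three quantities entering the definition of $\|\cdot\|_{X^{k+\log}}$ to the Fourier side by Parseval's identity on the circles $\{|z|=r\}$ with $r>\varepsilon$, and then let $r\downarrow\varepsilon$ (all the resulting identities hold in $[0,+\infty]$, by monotone convergence, so no convergence issue arises). Note first that the condition ``$h(\omega)=\sum_{n}h_n\overline{\omega}^{\,n}$ on $\mathbb{T}$'' is automatic: $h\in\mathcal{A}_\varepsilon$ is holomorphic on $\Delta_\varepsilon\supset\mathbb{T}$ with $h(z)=\sum_{n\geq1}h_nz^{-n}$ there, and $z^{-n}=\overline{z}^{\,n}$ on $\mathbb{T}$. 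Hence the whole content of the lemma is the weighted-$\ell^2$ norm equivalence, and ``$h\in X^{k+\log}$'' will simply mean that the right-hand side is finite.

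For the two plain $L^2$ terms I would argue directly by Parseval on $|z|=\varepsilon$, getting $\int_0^{2\pi}|h(\varepsilon e^{i\theta})|^2\,d\theta=2\pi\sum_{n\geq1}h_n^2\varepsilon^{-2n}$. Differentiating the Laurent series term by term, $\partial_z^k(z^{-n})=(-1)^k\,n(n+1)\cdots(n+k-1)\,z^{-n-k}$, so Parseval gives
\[\int_0^{2\pi}|(\partial_z^k h)(\varepsilon e^{i\theta})|^2\,d\theta=2\pi\sum_{n\geq1}(n(n+1)\cdots(n+k-1))^2\,\frac{h_n^2}{\varepsilon^{2(n+k)}},\]
and since $n^k\leq n(n+1)\cdots(n+k-1)\leq k^k n^k$ for $n\geq1$, this term is $\approx\sum_{n\geq1}n^{2k}\varepsilon^{-2(n+k)}h_n^2$, with constants depending only on $k$.

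Then I would turn to the logarithmic term, which carries the real work. Writing $g\triangleq\partial_z^k h$, one has $g(\varepsilon\tau)=\sum_{m\geq k+1}\gamma_m\tau^{-m}$ with $\gamma_m=(-1)^k(m-1)(m-2)\cdots(m-k)\,\varepsilon^{-m}h_{m-k}$; the substitution $\tau=\omega\xi$ (under which $|\tau-\omega|=|\xi-1|$ and $d\tau/\tau=d\xi/\xi$) gives
\[\int_{\mathbb{T}}\frac{g(\varepsilon\tau)-g(\varepsilon\omega)}{|\tau-\omega|}\,\frac{d\tau}{\tau}=\sum_{m\geq k+1}\gamma_m\,c_m\,\overline{\omega}^{\,m},\qquad c_m\triangleq\int_{\mathbb{T}}\frac{\xi^{-m}-1}{|\xi-1|}\,\frac{d\xi}{\xi},\]
so by Parseval its $L^2(\mathbb{T})$-norm squared equals $2\pi\sum_{m\geq k+1}|\gamma_m|^2|c_m|^2$. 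The crux is the two-sided bound $|c_m|\approx1+\log m$ for every $m\geq1$: parametrising $\xi=e^{i\eta}$ and discarding the imaginary part, which is odd under $\eta\mapsto2\pi-\eta$, one finds $c_m=-i\int_0^{2\pi}\frac{\sin^2(m\eta/2)}{\sin(\eta/2)}\,d\eta$, a nonzero purely imaginary number; since $\sin(\eta/2)\approx\min(\eta,2\pi-\eta)$ on $(0,2\pi)$, splitting the integral at scale $1/m$ and replacing $\sin^2(m\eta/2)$ by its average $1/2$ away from the origin yields $\int_0^{2\pi}\frac{\sin^2(m\eta/2)}{\sin(\eta/2)}\,d\eta\approx\log m$ for $m\geq2$, while the integral is the fixed positive number $4$ for $m=1$; hence $|c_m|\approx1+\log m$. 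This is the logarithmic loss of the SQG single-layer kernel, and it is the step I expect to be the main obstacle — though in the end it is an elementary one-dimensional estimate, not even a principal-value one, since the integrand $\frac{\xi^{-m}-1}{|\xi-1|}$ is bounded.

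Finally I would assemble the three contributions. Re-indexing $m=n+k$, one has $|\gamma_{n+k}|^2\approx n^{2k}\varepsilon^{-2(n+k)}h_n^2$ and $1+\log(n+k)\approx1+\log n$ (constants depending on $k$), so the logarithmic term is $\approx\sum_{n\geq1}n^{2k}\varepsilon^{-2(n+k)}(1+\log n)^2h_n^2$. Because $\varepsilon\in(0,1)$ we have $\varepsilon^{-2(n+k)}\geq\varepsilon^{-2n}$ and $n^{2k}(1+\log n)^2\geq1$ for all $n\geq1$, so this last contribution dominates the other two; adding the three therefore gives
\[\|h\|_{X^{k+\log}}^2\approx\sum_{n\geq1}\frac{h_n^2}{\varepsilon^{2(n+k)}}\,n^{2k}\,(1+\log n)^2,\]
and in particular $h\in X^{k+\log}$ precisely when the right-hand side is finite, which is exactly the assertion of the lemma.
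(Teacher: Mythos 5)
Your proof is correct, and it follows the same basic strategy as the paper: expand $\partial_z^k h$ in its Laurent series, observe that the kernel $\frac{1}{|\tau-\omega|}$ acts diagonally on the Fourier modes, and conclude by Parseval. The only real divergence is in how the diagonal multiplier is handled. The paper invokes the exact identity \eqref{c1} (quoted from the earlier work on $(\hbox{SQG})_\alpha$ V-states), which evaluates $\fint_{\T}\frac{\overline{\tau}^{\,m}-\overline{\omega}^{\,m}}{|\tau-\omega|}\frac{d\tau}{\tau}$ in closed form as $\overline{\omega}^{\,m}\big(-\frac{2}{\pi}-S_m\big)$ and then uses $S_m\sim\log m$; you instead re-derive the two-sided bound $|c_m|\approx 1+\log m$ from scratch via the elementary estimate of $\int_0^{2\pi}\frac{\sin^2(m\eta/2)}{\sin(\eta/2)}\,d\eta$ (splitting at scale $1/m$, with the vanishing of the denominator at both ends $\eta=0,2\pi$ correctly accounted for). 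This buys self-containedness at the cost of losing the exact constant, which is irrelevant for the norm equivalence. You are also more explicit than the paper on two points it leaves implicit: the Parseval computation of the two plain $L^2$ pieces together with the observation that, since $\varepsilon\in(0,1)$ and $n^{2k}(1+\log n)^2\geq 1$, they are dominated by the logarithmic term, and the remark that the representation $h(\omega)=\sum_n h_n\overline{\omega}^{\,n}$ on $\T$ is automatic for $h\in\mathcal{A}_\varepsilon$. These additions are sound, and the level of rigor in the interchange of sum and integral matches (indeed slightly exceeds) that of the paper's own proof.
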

\begin{proof}
It is easy to see that for $z\in \mathbb{\Delta}_{\varepsilon}$
\[(\partial_z^k h)(z)=\sum_{n=1}^{+ \infty} (-1)^k h_{n} \frac{(n+k-1)!}{(n-1)!}\frac{1}{z^{n+k}}. \]
Hence using the identity $\eqref{c1}$ we get for $\omega\in \T$
\begin{align*}
\fint_{\T} \frac{(\partial_z^k h)(\varepsilon \tau)-(\partial_z^k h)(\varepsilon \omega)}{\vert \tau -\omega \vert} \frac{d\tau}{\tau}
&=\sum_{n=1}^{+ \infty} (-1)^k \frac{h_{n}}{\varepsilon^{n+k}} \frac{(n+k-1)!}{(n-1)!} \fint_{\T} \frac{\overline{\tau}^{n+k}-\overline{\omega}^{n+k}}{\vert \tau -\omega \vert} \frac{d\tau}{\tau}\\
&=\sum_{n=1}^{+ \infty} (-1)^k \frac{h_{n}}{\varepsilon^{n+k}} \frac{(n+k-1)!}{(n-1)!}\overline{\omega}^{n+k} \left[-\frac{2}{\pi}-S_{n+k} \right].
\end{align*}
Therefore we may obtain  the equivalence between the norms since $S_n \sim \log(n)$. 
\end{proof}

\subsection{Hypergeometric functions}
We shall  give basic results on the Gauss hypergeometric functions. The formulae listed below will be crucial in the computations of the linearized operator associated to the V-state equations. Recall that $\forall (a,b,c)\in \R \times \R \times \R \setminus(-\N)$ the hypergeometric function $z \mapsto F(a,b,c;z)$ is defined on the open unit disc $\D$ by the power series
\[ F(a,b,c;z)=\sum_{n=0}^{+ \infty} \frac{(a)_n (b)_n}{(c)_n} \frac{z^n}{n!} \text{, } \forall z \in \D. \]
Here, $(x)_n$ is the Pockhhammer symbol defined by,
\[ (x)_n = \left \lbrace \begin{array}{cc}
1  & n=0 \\ 
x(x+1)\cdots (x+n-1) & n\geq 1.
\end{array} \right.\]
One may easily see that
\[ (x)_n=x(1+x)_{n-1} \text{, } (x)_{n+1}=(x+n)(x)_n. \]
For a future use we recall an integral representation of the hypergeometric function, for instance see \cite{29}. Assume that $c> b >0$, then
\[ F(a,b,c;z)=\frac{\Gamma(c)}{\Gamma(b) \Gamma(c-b)} \int_0^1 x^{b-1} (1-x)^{c-b-1} (1-zx)^{-a}
dx \text{, } \forall z\in  \D.\]
The function $\Gamma: \C \setminus (-\N) \rightarrow \C$ refers to the gamma function which is an analytic continuation to the negative half plane of the usual gamma function defined on the positive half-place $\lbrace \textnormal{Re}(z) >0 \rbrace $ by the integral representation,
\[ \Gamma(z)=\int_0^{+ \infty} t^{z-1} e^{-t} dt.\]
Next, we recall some contiguous functions relations of the hypergeometric series, see \cite{29}.

%\begin{equation}\label{eq1}
%c(c+1)[F\left(a,b,c;z \right)-F\left(a,b,c+1,z\right)]-abzF\left(a+1,b+1,c+2,z \right)=0
%\end{equation}
\begin{equation}\label{eq2}
cF\left(a,b,c;z\right)-cF\left(a+1,b,c;z\right)+bzF\left(a+1,b+1,c+1;z\right)=0
\end{equation}
\begin{equation}\label{eq11}
cF\left(a,b,c;z\right)-cF\left(a,b+1,c;z\right)+azF\left(a+1,b+1,c+1;z\right)=0
\end{equation}
\begin{equation}\label{eq12}
bF\left(a,b+1,c;z\right)-aF\left(a+1,b,c;z\right)+(a-b)\left(a,b,c;z\right)=0
\end{equation}
%\begin{equation}\label{eq3}
%(a-c+1)F\left(a,b,c;z\right)-aF\left(a+1,b,c;z\right)+(c-1)F\left(a,b,c-1;z\right)=0
%\end{equation}
\begin{equation}\label{eq4}
cF\left(a,b,c;z\right)-(c-b)F\left(a,b,c+1;z\right)-bF\left(a,b+1,c+1;z\right)=0
\end{equation}
%\begin{equation}\label{eq5}
%[1-b+(c-a-1)z]F\left(a,b,c;z\right)-(c-b)F\left(a,b-1,c;z\right)+(c-1)(1-z)F\left(a,b,c-1;z\right)=0
%\end{equation}
%\begin{equation}\label{eq6}
%(b-c+1)F\left(a,b,c;z\right)-bF\left(a,b,+1,c;z\right)+(c-1)F\left(a,b,c-1;z\right)=0
%\end{equation}
%\begin{equation}\label{eq7}
%(1-z)F\left(a,b,c;z\right)-F\left(a-1,b,c;z\right)+c^{-1}(c-b)zF\left(a,b,c+1;z\right)=0
%\end{equation}
%\begin{equation}\label{eq9}
%[a+(b-c)z]F\left(a,b,c;z\right)-a(1-z)F \left(a+1,b,c;z\right)+c^{-1}(c-b)(c-a)zF \left(a,b,c+1;z\right)=0
%\end{equation}
%
%\begin{equation}\label{eq10}
%[b+(a-c)z]F\left(a,b,c;z\right)-b(1-z)F\left(a,b+1,c;z\right)+c^{-1}(c-b)(c-a)zF \left(a,b,c+1;z\right)=0.
%\end{equation}
We end this discussion with recalling Bessel function $J_n$ of the first kind  with  $n \in \N$,
\[ \forall z\in \C,\quad J_n(z)=\sum_{k\geq 0}\frac{(-1)^k}{k!(n+k)!}\left( \frac{z}{2} \right)^{2k+n}.\]
We recall the Sonine-Schafheitlin's formula which hold provided that $0<b<a$ and the integral is convergent, see for example  \cite[p. 401]{29},
\begin{align*}
\int_0^{+ \infty} \frac{J_{\mu}(at)J_{\nu}(bt)}{t^{\lambda}}dt=&\frac{a^{\lambda-\nu-1}b^{\nu}\Gamma(\frac{1}{2}\mu+\frac{1}{2}\nu-\frac{1}{2}\lambda+\frac{1}{2})}{2^{\lambda}\Gamma(\nu+1)\Gamma(\frac{1}{2} \mu+\frac{1}{2}\lambda-\frac{1}{2}\nu+\frac{1}{2})}\\
&\times F\left(\frac{\mu+\nu-\lambda+1}{2},\frac{\nu-\lambda-\mu+1}{2},\nu+1;\frac{b^2}{a^2}\right).
\end{align*}
\section{Regularity of the nonlinear functional}\label{RegulaT}
In this section we are going to check that the functionals $G_j$ seen in \eqref{V-state} are well-defined and satisfy the regularity assumption required by  Crandall-Rabinowitz's theorem. Recall that the exterior domains $\C\backslash D_j$ are parametrized by the conformal mappings $\Phi_j$ whose extension to the boundaries enjoy the following structure,
\[ \forall\, \omega\in\T,\quad \Phi_1(\omega)=\omega + \sum_{n \in \N^{\star}} a_n \overline{\omega}^n =\omega+f_1(\omega) \text{ with } a_n\in \R.\]
\[ \Phi_2(\omega)=b\omega + \sum_{n \in \N^{\star}} c_n \overline{\omega}^n =b\omega+ f_2(\omega) \text{ with  } c_n\in \R.\]
The parameter $b$ belongs to $(0,1)$ which means that we are looking for V-states which are perturbation of the annulus centered at zero and of radius $b$ and $1.$
Recall that the equations of the V-states are given by,
\[ \forall \omega \in \T \text{, } G_j(\Omega,f_1,f_2)(\omega)=0 \text{, } j=1,2,\]
where 
\begin{equation}\label{G_j}
G_j(\Omega,f_1,f_2)(\omega)= \textnormal{Im} \left \lbrace \left(\Omega\Phi_j(\omega)- S(\Phi_1,\Phi_j)(\omega)+ S(\Phi_2,\Phi_j)(\omega)\right)\overline{\Phi_j'(\omega)}\overline{\omega} \right \rbrace
\end{equation}
with
\[ S(\Phi_i,\Phi_j)(\omega)=\fint_{\T} \frac{\tau\Phi_i'(\tau)-\omega \Phi_j'(\omega)}{\vert \Phi_i(\tau)-\Phi_j(\omega)\vert }\frac{d\tau}{\tau}. \]

The study of the regularity of these functionals will be done in several steps. In the first step we shall analyze the existence of the functionals and in the second one establish some strong  regularity.
\subsection{Existence}

 The main result of this section reads as follows.
\begin{proposition}\label{biendef}
For  $j\in \lbrace 1,2 \rbrace$ and for any   $ k \geq 3$, there exists $r \in (0,1)$ such that,

 \[ \begin{array}{lll}
    G_j: &\R\times V_r  \times V_r  &\longrightarrow Y^{k-1}\\
         &(\Omega,f_1,f_2) &\longmapsto G_j(\Omega,f_1,f_2)
    \end{array} \]
    is well-defined. Where $V_r=\big\{f\in X^{k+\log},\,  \text{ }\Vert f \Vert_{X^{k+\log}} \leq r\big\}.$
\end{proposition}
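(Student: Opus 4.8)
The plan is to decompose $G_j$ into its constituent pieces and check that each lands in $Y^{k-1}$, i.e.\ each piece is a holomorphic function on $C_\varepsilon$ of the form $i\sum_{n\ge1}h_n(z^n-z^{-n})$ with real coefficients, whose $\partial_z^{k-1}$-derivative is square-integrable on both circles $\T_\varepsilon$ and $\T_{1/\varepsilon}$. Concretely, $G_j = \mathrm{Im}\{(\Omega\Phi_j - S(\Phi_1,\Phi_j)+S(\Phi_2,\Phi_j))\,\overline{\Phi_j'}\,\overline\omega\}$, and the strategy is: (i) note that since the Fourier coefficients of $f_1,f_2$ are real, the annulus solution was already verified (in Section~\ref{Boundary equations}) to satisfy $G_j(\Omega,0,0)\equiv 0$, and the symmetry with respect to the real axis forces $G_j$ to have the sine-type Fourier expansion required by $\widehat{\mathcal A}_\varepsilon$; (ii) the linear-in-$\Omega$ term $\mathrm{Im}\{\Omega\Phi_j\overline{\Phi_j'}\overline\omega\}$ is a polynomial expression in $f_j$ and its derivative, hence its regularity in $Y^{k-1}$ follows from the algebra property of $X^{k+\log}$ under pointwise product together with the fact that differentiation costs one derivative (which is exactly the gap between the index $k+\log$ and $k-1$); (iii) the genuinely delicate terms are the singular-integral operators $S(\Phi_i,\Phi_j)$.

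For the singular terms I would exploit the structure already laid bare in the proof of Lemma~\ref{caracterisation}: on the trivial solution, $S(\mathrm{Id},\mathrm{Id})$ and $S(b\,\mathrm{Id},b\,\mathrm{Id})$ produce exactly the kernel $\fint_\T \frac{\overline\tau^{\,n}-\overline\omega^{\,n}}{|\tau-\omega|}\frac{d\tau}{\tau}$, whose $L^2(\T)$-norm grows like $S_n\sim\log n$; this is precisely why the target space is $Y^{k-1}$ (no log loss) while the domain is $X^{k+\log}$ (log gain), so the logarithmically-divergent piece of the kernel is absorbed. The plan is to write, for each pair $(i,j)$,
\[
S(\Phi_i,\Phi_j)(\omega)=\fint_\T\frac{\tau\Phi_i'(\tau)-\omega\Phi_j'(\omega)}{|\Phi_i(\tau)-\Phi_j(\omega)|}\frac{d\tau}{\tau},
\]
and split the denominator as $|\Phi_i(\tau)-\Phi_j(\omega)| = |\tau\delta_{i1}+b\,\delta_{i2}\cdot\tau - \cdots|$ — more usefully, factor out the leading difference $|A\tau - B\omega|$ (with $A,B\in\{1,b\}$) and Taylor-expand the correction $\left(1+ \frac{\text{smooth, small}}{A\tau-B\omega}\right)^{-1/2}$. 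For the diagonal case $i=j$ this reproduces the $\fint\frac{\bar\tau^n-\bar\omega^n}{|\tau-\omega|}$ kernel plus a remainder that is smoother by one derivative; for the off-diagonal case $|\tau-b\omega|$ or $|b\tau-\omega|$ the kernel is non-singular on $\T$, so it is genuinely smoothing. One then uses that convolution-type operators with kernels of logarithmic (or better) singularity map $X^{k+\log}\to Y^{k-1}$ boundedly, and that the nonlinear remainders — being composites of the smooth map $s\mapsto(1+s)^{-1/2}$ with polynomial expressions in $f_1,f_2,f_1',f_2'$ — stay in the relevant space provided $r$ is small enough to keep $A\tau-B\omega$ bounded away from zero after perturbation (this is where the requirement $\overline{D_2}\subset D_1$, i.e.\ $b<1$ with a margin, and the smallness radius $r$ enter). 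The condition $k\ge3$ is what guarantees $X^{k+\log}\hookrightarrow C^{1+\beta}$ type embeddings, so that $\Phi_j$ is genuinely a diffeomorphism onto its image and all compositions make sense.

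Assembling: I would first record the embedding $X^{k+\log}\hookrightarrow H^{k}(\T)\hookrightarrow C^{k-1}(\T)$ (for $k\ge3$) and the Banach-algebra property of the relevant scale; then handle the $\Omega$-term by the algebra property alone; then treat $S(\Phi_i,\Phi_j)$ by the split above, bounding the principal (log-singular) part via Lemma~\ref{caracterisation} and the remainder via Hölder estimates on smooth kernels; finally check the output has the correct parity (the $\mathrm{Im}\{\cdot\,\overline\omega\}$ structure and reality of all Fourier coefficients give the antisymmetric $z^n-z^{-n}$ form) so that $G_j(\Omega,f_1,f_2)\in\widehat{\mathcal A}_\varepsilon$, and that the two $L^2$-conditions defining $Y^{k-1}$ hold. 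The main obstacle I anticipate is the uniform control, over the small ball $V_r\times V_r$, of the denominators $|\Phi_i(\tau)-\Phi_j(\omega)|$ near the diagonal $\tau=\omega$ in the case $i=j$: one must extract the exact logarithmic singularity cleanly and show the leftover kernel is a bounded operator $X^{k+\log}\to Y^{k-1}$ without any log loss, which requires carefully tracking how many derivatives the $(1-zx)^{-a}$-type expansions cost — precisely the computation the paper defers to the detailed version of Section~\ref{RegulaT}.
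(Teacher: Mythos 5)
Your outline follows the paper's route in its broad strokes: split $G_j$ into the rotation term, the self-induced singular integrals and the interaction integrals, factor the singularity out of $\vert \Phi_i(\tau)-\Phi_j(\omega)\vert$, expand the correction through $s\mapsto(1+s)^{-1/2}$ for $r$ small, exploit the logarithmic gap between $X^{k+\log}$ and $Y^{k-1}$ for the diagonal terms, treat the off-diagonal terms as nonsingular since $\vert b\tau-\omega\vert\geq 1-b$, and check reality/parity at the end. But there is a genuine gap at the central step, in two respects. First, membership in $Y^{k-1}$ requires an actual holomorphic extension of $\omega\mapsto S(\Phi_i,\Phi_j)(\omega)$ to the annulus $C_{\varepsilon}$, and the modulus $\vert \Phi_i(\tau)-\Phi_j(\omega)\vert$ is not holomorphic in $\omega$; you assert holomorphy of each piece but never construct it. The paper's mechanism (Lemma~\ref{anal}) is: change variables $\tau\mapsto\tau\omega$ so that the singular factor becomes the $\omega$-independent $\vert\tau-1\vert$, then write $\vert V(\tau\omega)-V(\omega)\vert^2=b^{2(j-1)}\vert\tau-1\vert^2\,g(\tau,\omega)\,g(\overline{\tau},\omega^{-1})$ with $g(\tau,z)=1+\frac{\tilde{V}(\tau z)-\tilde{V}(z)}{b^{j-1}z(\tau-1)}$ holomorphic on $\overline{\mathbb{\Delta}_{\varepsilon}}$ (this uses the reality of the coefficients, i.e. $\overline{V(\omega)}=V(1/\omega)$ on $\T$), and finally choose $r$ small so that $z\mapsto g(\tau,z)g(\overline{\tau},1/z)$ avoids $\R_{-}$, making its square root holomorphic. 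That is where the smallness of $r$ really enters: not merely keeping the denominator away from zero on $\T$, as you suggest, but keeping the complexified product off the branch cut of the square root.

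Second, the decisive estimate — that the leading term produced by $\partial_z^{k-1}$ and Leibniz, namely $z\fint_{\T}\frac{(\partial_z^kh)(\tau z)-(\partial_z^kh)(z)}{b^{j-1}\vert\tau-1\vert}\,g(\tau,z)^{-\frac12}g(\overline{\tau},\tfrac1z)^{-\frac12}\frac{d\tau}{\tau}$, lies in $L^2(\varepsilon^{\pm1}\T)$ — is not a general fact about ``convolution-type operators with logarithmic kernels''. The paper proves it via Lemma~\ref{exp}: $g(\tau,z)^{-\frac12}=\big(1+\frac{\partial_z\tilde{V}(z)}{b^{j-1}}\big)^{-\frac12}+(\tau-1)H(\tau,z)$ with $H$ bounded, so the exactly singular part is a bounded function of $z$ times precisely the quantity $\fint_{\T}\frac{(\partial_z^kh)(\varepsilon\tau\,\cdot)-(\partial_z^kh)(\varepsilon\,\cdot)}{\vert\tau-1\vert}\frac{d\tau}{\tau}$ whose $L^2$ bound is the third defining condition of $X^{k+\log}$ (equivalently Lemma~\ref{caracterisation}), while the remainder has an integrable kernel after a H\"older bound on $\partial_z^{k-1}\tilde{V}$; the intermediate terms ($K_2$, $K_3$ in the paper) must also be checked. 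You leave exactly this step as an acknowledged hole, and your closing remark that it is ``the computation the paper defers'' is inaccurate — it is carried out in full in Section~\ref{RegulaT}. A smaller omission: the sine-type form of the output is not an automatic consequence of symmetry; the paper verifies that the Fourier coefficients of $S(\Phi_i,\Phi_j)$ are real by conjugating $a_n$ and changing variables, and only then the identity $G_j=\frac{F_j(\omega)-F_j(1/\omega)}{2i}$ places $G_j$ in $\widehat{\mathcal{A}}_{\varepsilon}$, hence in $Y^{k-1}$.
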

The proof of this result is postponed later and is founded on the following lemma. 
\begin{lemma}\label{anal}
Let  $\varepsilon \in (0,1), j\in \{1 ,2 \}$, $V=b^{j-1}\textnormal{Id}+\tilde{V}$ with $\tilde{V}\in V_r$ and $r$ small enough. Let $h\in V_r,$ then the function
\[ K:\omega\in\T\mapsto  \fint_{\T} \frac{\tau \partial_{\tau}h(\tau)-\omega  \partial_{\tau}h(\omega)}{\vert V(\tau)-V(\omega)\vert}\frac{d \tau}{\tau}\]
can be extended analytically in $C_{\varepsilon}$ to a function $\tilde{K}$ with $\tilde{K}\in \tilde{Y}^{k-1}$. In addition,
\[ \Vert \tilde{K} \Vert _{\tilde{Y}^{k-1}} \leq C \Big(\Vert V\Vert_{H^k(\varepsilon\T)}+\Vert V\Vert_{H^k(\varepsilon^{-1}\T)}\Big) \Vert h \Vert_{X^{k+\log}}.\]
\end{lemma}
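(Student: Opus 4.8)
The strategy is to expand the kernel $\frac{1}{|V(\tau)-V(\omega)|}$ as a convergent series on $\T$, exploiting the fact that $V(\tau)-V(\omega) = b^{j-1}(\tau-\omega)(1+g(\tau,\omega))$ with $g$ small when $\tilde V$ has small $X^{k+\log}$-norm, and to read off the analytic extension and the $\tilde Y^{k-1}$ bound from the Fourier-coefficient characterization of the spaces (Lemma~\ref{caracterisation}). First I would write $V(\tau)-V(\omega)=b^{j-1}(\tau-\omega)\bigl(1+A(\tau,\omega)\bigr)$ where $A$ collects the difference quotients of $\tilde V$; since $|\tau-\omega|^2=(\tau-\omega)(\overline\tau-\overline\omega)=-(\tau-\omega)^2/(\tau\omega)$ on $\T$, one can write $|V(\tau)-V(\omega)|^2$ as an explicit product involving $(\tau-\omega)$, $(\overline\tau-\overline\omega)$ and bounded analytic factors, and then $|V(\tau)-V(\omega)|^{-1} = b^{-(j-1)}|\tau-\omega|^{-1}(1+A)^{-1/2}(1+\overline A)^{-1/2}$. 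Using the smallness of $\|\tilde V\|_{X^{k+\log}}$ (hence of $A$ in the relevant norm), I would expand $(1+A)^{-1/2}$ by the binomial series and control it together with its derivatives up to order $k$ in $H^k$, so that the whole non-singular part of the kernel lies in a Banach algebra of functions of $(\tau,\omega)$ that extend holomorphically to $C_\varepsilon\times C_\varepsilon$.

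Next I would treat the genuinely singular factor $\frac{\tau\partial_\tau h(\tau)-\omega\partial_\tau h(\omega)}{|\tau-\omega|}$: here I would invoke precisely the computation already carried out in the proof of Lemma~\ref{caracterisation}, namely $\fint_\T \frac{\overline\tau^{\,p}-\overline\omega^{\,p}}{|\tau-\omega|}\frac{d\tau}{\tau}=\overline\omega^{\,p}\bigl[-\tfrac{2}{\pi}-S_p\bigr]$, which exhibits the singular integral applied to a monomial as a bounded (indeed $\log$-losing) operation. Because $h\in X^{k+\log}$ means $\tau\partial_\tau h(\tau)=-\sum_{n\ge1} n h_n\overline\tau^{\,n}$ with $\sum n^{2k}(1+\log n)^2 h_n^2 \varepsilon^{-2(n+k)}<\infty$, the extra $\log$ absorbed by the singular kernel is exactly compensated, and the output after the singular integration sits in a space with $k$ honest derivatives, i.e. in $\tilde Y^{k-1}$. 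Combining this with the smooth multiplier $(1+A)^{-1/2}(1+\overline A)^{-1/2}$ via the algebra property of $H^k$ on the circles $\varepsilon\T$ and $\varepsilon^{-1}\T$, and using that multiplication by a bounded holomorphic function on $C_\varepsilon$ preserves the analytic extension, yields $\tilde K\in\tilde Y^{k-1}$ together with the claimed estimate $\|\tilde K\|_{\tilde Y^{k-1}}\le C(\|V\|_{H^k(\varepsilon\T)}+\|V\|_{H^k(\varepsilon^{-1}\T)})\|h\|_{X^{k+\log}}$, where the factor $\|h\|_{X^{k+\log}}$ enters linearly because $K$ is linear in $h$ and the smooth-kernel norm is bounded in terms of $\|V\|_{H^k}$ on the two circles.

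I expect the main obstacle to be the bookkeeping for the analytic extension of the \emph{non-local} object: one must argue that after integrating $\tau$ over $\T$ the resulting function of $\omega$ genuinely extends holomorphically to the full annulus $C_\varepsilon$ (not just to a one-sided neighbourhood), and simultaneously that differentiating up to order $k-1$ in $\omega$ and evaluating on both $\varepsilon\T$ and $\varepsilon^{-1}\T$ stays in $L^2$. This is where the two-circle structure of $\tilde Y^{k-1}$ and $X^{k+\log}$ is used essentially: the estimate must be shown uniformly as $\omega$ ranges over $C_\varepsilon$, which requires the expansion of $(1+A)^{-1/2}$ to be valid not just on $\T$ but on $C_\varepsilon$, and this in turn is what forces the smallness hypothesis $r$ small and the appearance of the $H^k$-norms of $V$ on \emph{both} bounding circles. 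A secondary technical point is checking that the binomial expansion of $(1+A(\tau,\omega))^{-1/2}$ converges in the $H^k$-algebra norm in the $(\tau,\omega)$ variables uniformly over $C_\varepsilon\times C_\varepsilon$; this is a direct but somewhat lengthy Faà-di-Bruno / Banach-algebra estimate, and it is the place where the hypothesis $k\ge3$ (so that $H^k$ embeds in a Banach algebra under pointwise product in one real dimension) is used.
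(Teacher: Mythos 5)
Your overall skeleton matches the paper's: factor $|V(\tau)-V(\omega)|=b^{j-1}|\tau-\omega|\,(1+A)^{1/2}(1+\overline A)^{1/2}$, extend the non-singular factor holomorphically to the annulus for $r$ small, and use the defining $X^{k+\log}$ quantity to absorb the logarithmic loss of the singular integral at the top order of derivatives. But there is a genuine gap at the step where you ``combine the output after the singular integration with the smooth multiplier via the algebra property of $H^k$.'' The multiplier $(1+A(\tau,\omega))^{-1/2}(1+\overline{A}(\tau,\omega))^{-1/2}$ depends on the integration variable $\tau$, so it sits \emph{inside} the integral and cannot be pulled out after the singular integration; the operation is not a composition of ``apply the singular integral to $h$'' followed by ``multiply by a smooth function.'' This matters because $1/|\tau-\omega|$ is \emph{not} integrable on $\T$ (it behaves like $1/|\sin|$), so the convergence of $K$ and of each term of $\partial_z^{k-1}\tilde K$ rests entirely on the difference structure of the numerator; in particular you cannot expand the multiplier in a (double Fourier or binomial) series and then invoke the monomial identity of Lemma~\ref{caracterisation} termwise, since splitting the numerator difference against a $\tau$-dependent factor destroys the cancellation that makes the integral meaningful.

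The paper closes exactly this point with two ingredients you would need to reproduce: (i) a Leibniz decomposition of $\partial_z^{k-1}\tilde K$ isolating the worst term $K_1$, where all $k-1$ derivatives fall on the $h$-difference, from terms like $K_3$ where derivatives hit the multiplier (those are handled by $C^{1/2}$ control of $\partial_z^{k-1}\tilde V$, which makes $|\tau-1|^{-1/2}$ integrable); and (ii) the diagonal-freezing expansion of Lemma~\ref{exp},
\[
g(\tau,z)^{-\frac12}=\Big(1+\tfrac{\partial_z\tilde V(z)}{b^{\,j-1}}\Big)^{-\frac12}+(\tau-1)H(\tau,z),\qquad \|H\|_{L^\infty}\le C\|\tilde V\|_{H^3},
\]
which writes the multiplier as a constant-in-$\tau$ factor plus a remainder carrying an extra $(\tau-1)$. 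Only for the frozen part can the factor be taken outside the integral, so that the quantity $\big\|\fint_{\T}\frac{(\partial_z^k h)(\varepsilon\tau\,\cdot)-(\partial_z^k h)(\varepsilon\,\cdot)}{|\tau-\cdot|}\frac{d\tau}{\tau}\big\|_{L^2(\T)}$ built into the $X^{k+\log}$ norm applies; the remainder term has an integrable kernel and is bounded by $\|\partial_z^k h\|_{L^2}$-type quantities. Without this freezing/commutator argument (or an equivalent cancellation bookkeeping inside your series expansion), your proof does not close; with it, your binomial-series packaging of the smooth factor is an acceptable variant of the paper's direct holomorphic definition of the square root, though the series adds convergence bookkeeping without buying anything (note also that $H^s(\T)$ is an algebra already for $s>1/2$; the hypothesis $k\ge3$ is used for the Lipschitz and $C^{1/2}$ embeddings, not for the algebra property).
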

Before giving details of the proof we need to  make a comment.
\begin{remark}
Take 
 \[ \begin{array}{lllllll}
    h: &\T &\longrightarrow \C & \text{ and} & \tilde{h}: &C_{\varepsilon} &\longrightarrow \C \\
         &\tau &\longmapsto\sum_{n=1}^{+ \infty}a_n\overline{\tau}^n&&&z&\longmapsto\sum_{n=1}^{+ \infty} \frac{a_n}{z^n}
    \end{array} \]
    then for any $\tau\in \T, \, z\in C_\varepsilon$
    \[  \partial_{\tau}h(\tau)=-\sum_{n=1}^{+ \infty}na_n\overline{\tau}^{n+1}
   \quad \text{ and } \quad \partial_z\tilde{h}(z)=-\sum_{n=1}^{+ \infty}n \frac{a_n}{z^{n+1}}. \]
  Thus,
    \[\partial_{\tau}h= \left. \partial_z\tilde{h}\right|_{\T}.\]
\end{remark}
\begin{proof}
By change of variables, we may write
\[K(\omega)=\omega\fint_{\T} \frac{\tau  \partial_{\tau}h(\tau \omega)-\partial_{\tau}h(\omega)}{\vert V(\tau \omega)-V(\omega)\vert }\frac{d\tau}{\tau}.\]
Our next task is to get a holomorphic  extension of $\omega\mapsto |V(\tau \omega)-V(\omega)|.$ For this aim we write for any $\tau,\omega\in \T,$
\begin{align*}
\vert V(\tau \omega)-V(\omega) \vert^2&=\left (V(\tau \omega)-V(\omega)\right) \left( V({ \overline{\tau}}{\omega}^{-1})-V(\omega^{-1}) \right)\\
&=b^{2(j-1)} \vert \tau -1 \vert^2 \,g(\tau, \omega)\,g(\overline{\tau},\omega^{-1}),
\end{align*}
where $g$ can be extended in a usual way as follows,
\begin{equation}\label{f11}\forall z\in \overline{\mathbb{\Delta}_\varepsilon},\quad g(\tau, z)=1+\frac{\tilde{V}(\tau z)-\tilde{V}(z)}{b^{j-1} z(\tau-1)}. %\quad \text{and } \quad g(\overline{\tau},z^{-1})=1+z \frac{ \tilde{V}(\frac{\overline{\tau}}{ z})-\tilde{V}(z^{-1})}{b^{j-1} (\overline{\tau}-1)} 
\end{equation}
Therefore we get as a by-product,
\begin{equation}\label{maxinf}
\exists C>0, \forall \tau\in \T,\forall z\in \overline{\mathbb{\Delta}_\varepsilon},\quad C^{-1}\leq | g(\tau, z)|\leq C.
\end{equation}
Now we shall use the following estimate,%According to mean value theorem combined with the maximum principle for holomorphic functions we obtain 
\begin{align*}\vert\tilde{V}(\tau z)-\tilde{V}(z)\vert&\leq\varepsilon  |\tau-1|{\Vert \partial_z\tilde{V}\Vert_{L^\infty( \varepsilon \T)}}.\end{align*}
%\begin{align*} \vert 1-g(\tau, \omega)\vert  &\leq \frac{\Vert \tilde{V}\Vert_{\hbox{Lip}(\T)}}{b^{j-1}}\\
%&\leq C\Vert V\Vert_{X^{k+\log}}<1.
%\end{align*}
This follows from the  mean value theorem combined with the maximum principle for holomorphic functions. Indeed, setting  $\widehat{{V}}(z)=\tilde{V}(\frac{1}{z})$, which is holomorphic in  the disc $D_{\frac{1}{\varepsilon}}=\left\lbrace z \in \C , \vert z \vert < \frac{1}{\varepsilon} \right\rbrace$, we deduce by the mean value theorem that for any  $ z_1,z_2 \in D_{\frac{1}{\varepsilon}}$,
\begin{equation}\label{maxprincipe}
\vert \widehat{{V}}(z_1)-\widehat{{V}}(z_2)\vert \leq\vert z_1 -z_2\vert  \|\partial_{z}\widehat{{V}}\|_{L^\infty(\overline{ D_{\frac{1}{\varepsilon}}})}.
\end{equation}
According to the maximum principle one readily gets
\begin{align*} \|\partial_{z}\widehat{{V}}\|_{L^\infty(\overline{ D_{\frac{1}{\varepsilon}}})}&=
\Vert \partial_z \widehat{{V}} \Vert_{L^{ \infty}( \varepsilon^{-1} \T)}\\
& =\varepsilon^2 \Vert \partial_z \tilde{V} \Vert_{L^{ \infty}(\varepsilon \T)},
\end{align*}
 Applying this inequality with $z_1=\frac{1}{\tau z}$ and $z_2=\frac{1}{z}$ for  $z \in C_{\varepsilon}$ we deduce
\[ \vert \tilde{V}(\tau z)-\tilde{V}(z) \vert \leq \varepsilon \vert {\tau}-1 \vert \Vert \partial_z \tilde{V} \Vert_{L^{ \infty}(\varepsilon \T)} \]
which is the desired inequality. Using  Sobolev embedding $X^{k+\log}\hookrightarrow \hbox{Lip}(\varepsilon\T)$ for $k\geq2$  we find
\begin{equation}\label{estimation}
\vert \tilde{V}(\tau z)-\tilde{V}(z) \vert \leq C\vert {\tau}-1 \vert \Vert \partial_z \tilde{V} \Vert_{X^{k-1+\log}} \leq C\vert {\tau}-1 \vert \Vert  \tilde{V} \Vert_{X^{k+\log}} 
\end{equation}
with $C$ a constant depending on $\varepsilon.$ 

Consequently, one may find small $r$ such that for $\tilde V\in V_r$ the function $z\in C_\varepsilon\mapsto  g(\tau,z) g(\tau, z^{-1})$  is holomorphic and does not cross the negative real axis $\R_{-}$. This allows to define the square root of this latter function,  which remains in turn  holomorphic in the same set $C_\varepsilon.$ Finally, the holomorphic extension of $K$ to  $C_\varepsilon$ could be 
\begin{align*}\tilde{K}(z)&= z\fint_{\T} \frac{\tau  (\partial_zh)(\tau z )- (\partial_zh)(z)}{ b^{j-1} \vert \tau - 1\vert } g(\tau, z)^{-\frac{1}{2}} g(\overline{\tau},\frac{1}{ z})^{-\frac{1}{2}}\frac{d\tau}{\tau}\\
&\triangleq z \fint_{\T} k(z,\tau) d\tau. 
\end{align*}
It remains to check the holomorphic structure of this integral with respect to the complex parameter.  Observe that for fixed $\tau\in \T\backslash\{1\}$ the function  $z\in C_\varepsilon \mapsto k(\tau,z)$ is holomorphic. We also note that the mapping  $ \tau\in \T\backslash\{1\}\mapsto k(\tau,z)$ is bounded uniformly in $z\in C_\varepsilon$. This follows from the estimate
\begin{align*}\vert \partial_z h(\tau z)-\partial_z h(z) \vert & \leq 
\varepsilon \vert \tau-1\vert \Vert \partial_z^2h \Vert_{L^{\infty}(\varepsilon \T )}\\
& \leq C \vert \tau-1\vert \Vert h \Vert_{H^3(\varepsilon \T)}\\
& \leq C \vert \tau-1\vert \Vert h \Vert_{X^{k+\log}}.
\end{align*}
Therefore in view of \eqref{maxinf}, we find a constant $C$ such that for any $(z, \tau)\in \overline{C_{\varepsilon}} \times \T$
\begin{equation}\label{Ineq21} \vert k(z,\tau) \vert \leq C.
\end{equation}
Consequently  $\tilde{K}$ is analytic in the annulus $C_{\varepsilon}$ and therefore it belongs to the class $\tilde A_\varepsilon$.  Hence, it remains to check that $\tilde{K}$ has finite  norm in $\tilde Y^{k-1}$.  
We shall start with the  $L^2$ norm of the inner restriction $\omega\in \T\mapsto \tilde{K}(\varepsilon\omega )$. We observe that 
\[ \tilde{K}(\varepsilon \omega)= \varepsilon \omega\fint_{\T} k(\varepsilon \omega,\tau) d\tau.\]
It is obvious  from \eqref{Ineq21} that
\[ \tilde{K}(\varepsilon \cdot)\in L^{\infty}(\T) \subset L^2(\T)\]
with
\[ \Vert \tilde{K}(\varepsilon \cdot) \Vert_{L^2(\T)} \leq C \Vert V \Vert_{H^2(\varepsilon \T)} \Vert h \Vert_{H^3(\varepsilon \T)}.\]
As to the estimate over the exterior  boundary we proceed in the same way as before and we get
\[ \Vert \tilde{K}(\frac{1}{\varepsilon}\cdot ) \Vert_{L^2(\T)} \leq C \Vert V \Vert_{H^2(\varepsilon \T)} \Vert h \Vert_{H^3(\varepsilon \T)}.\]
Now, we want to control the $L^2$ norm of $\partial_z^{k-1}\tilde{K}(\varepsilon^{\pm} \cdot)$. In what follows, we just give details about $\partial_z^{k-1}\tilde{K}(\varepsilon\cdot)$, we deal with the other term with similar ideas. The computations are very long and we shall focus only on the leading term of $\partial_z^{k-1}\tilde{K}$. From Leibniz formula we may write
\begin{eqnarray*}
\partial_z^{k-1}\tilde{K}(z)&=& z\fint_{\T} \frac{(\partial_z^kh)(\tau z )- (\partial_z^kh)(z)}{ b^{j-1} \vert \tau - 1\vert } g(\tau, z)^{-\frac{1}{2}} g(\overline{\tau},\frac{1}{ z})^{-\frac{1}{2}}\frac{d\tau}{\tau}\\
&+& z\fint_{\T} \frac{(\tau^k-1)  (\partial_z^kh)(\tau z )}{ b^{j-1} \vert \tau - 1\vert } g(\tau, z)^{-\frac{1}{2}} g(\overline{\tau},\frac{1}{ z})^{-\frac{1}{2}}\frac{d\tau}{\tau}\\
&+& z\fint_{\T} \frac{\tau  (\partial_zh)(\tau z )- (\partial_zh)(z)}{ b^{j-1} \vert \tau - 1\vert } \partial_z^{k-1}\Big[g(\tau, z)^{-\frac{1}{2}} g(\overline{\tau},\frac{1}{ z})^{-\frac{1}{2}}\Big]\frac{d\tau}{\tau}+l.o.t.\\
&\triangleq& z K_1(z)+zK_2(z)+zK_3(z)+l.o.t.
\end{eqnarray*}
We shall now check that the terms $K_2$ and $K_3$  can actually be included to  the low order terms. Indeed, for $K_2$ we write according to \eqref{maxinf},
\begin{eqnarray*}
\|K_2(\varepsilon \cdot)\|_{L^\infty(\T)}&\le&C\|\partial_z^{k}h(\varepsilon\cdot)\|_{L^2(\T)}.
\end{eqnarray*}
As to the third term $K_3$ we shall only extract   some significant terms  and the other ones are treated in a similar way. First, it is easy to get

\[ \partial_z\Big[g(\tau, z)^{-\frac{1}{2}}g(\overline{\tau},\frac{1}{ z})^{-\frac{1}{2}}\Big]=-\frac{1}{2}\partial_z \Big(g(\tau, z)g(\overline{\tau},\frac{1}{ z}) \Big)g(\tau, z)^{-\frac{3}{2}}g(\overline{\tau},\frac{1}{ z})^{-\frac{3}{2}}\]
and 
\begin{align*}
\partial_z\Big(g(\tau, z)g(\overline{\tau},\frac{1}{ z})\Big)=&\frac{z(\tau-1)\partial_z\tilde{V}(\tau z)+z \left(\partial_z\tilde{V}(\tau z)-\partial_z\tilde{V}(z) \right)-\left(\tilde{V}(\tau z)-\tilde{V}(z)\right)}{z^2b^{j-1}(\tau-1)}g(\overline{\tau},\frac{1}{ z})\\
&+\frac{z\left(\tilde{V}(\frac{\overline{\tau}}{z})-\tilde{V}(\frac{1}{z}) \right)-(\overline{\tau}-1)\partial_z\tilde{V}(\frac{\overline{\tau}}{z})-\left(\partial_z\tilde{V}(\frac{\overline{\tau}}{z})-\partial_z\tilde{V}(\frac{1}{z})\right)}{b^{j-1}z(\overline{\tau}-1)}g(\tau,z).
\end{align*}
Thus
\begin{eqnarray*}
z\partial_z\Big(g(\tau, z)^{-\frac{1}{2}}g(\overline{\tau},\frac{1}{ z})^{-\frac{1}{2}}\Big)&=&-\frac{1}{2b^{j-1}}\frac{\partial_z\tilde{V}(\tau z)-\partial_z\tilde{V}(z)}{(\tau-1)} g(\tau, z)^{-\frac{3}{2}}g(\overline{\tau},\frac{1}{ z})^{-\frac{1}{2}}\\
&+&\frac{1}{2b^{j-1}}\frac{\partial_z\tilde{V}(\frac{\overline{\tau}}{ z})-\partial_z\tilde{V}(\frac1z)}{(\overline{\tau}-1)} g(\tau, z)^{-\frac{1}{2}}g(\overline{\tau},\frac{1}{ z})^{-\frac{3}{2}}+ l.o.t.
\end{eqnarray*}
Iterating this procedure we find
\begin{eqnarray*}
z\partial_z^{k-1}\Big(g(\tau, z)^{-\frac{1}{2}}g(\overline{\tau},\frac{1}{ z})^{-\frac{1}{2}}\Big)&=&-\frac{1}{2b^{j-1}}\frac{\partial_z^{k-1}\tilde{V}(\tau z)-\partial_z^{k-1}\tilde{V}(z)}{(\tau-1)} g(\tau, z)^{-\frac{3}{2}}g(\overline{\tau},\frac{1}{ z})^{-\frac{1}{2}}\\
&+&\frac{1}{2b^{j-1}}\frac{\partial_z^{k-1}\tilde{V}(\frac{\overline{\tau}}{ z})-\partial_z^{k-1}\tilde{V}(\frac1z)}{(\overline{\tau}-1)} g(\tau, z)^{-\frac{1}{2}}g(\overline{\tau},\frac{1}{ z})^{-\frac{3}{2}}+ l.o.t.
\end{eqnarray*}
It follows that
\begin{align*}
 K_3(\varepsilon \omega)&=-\frac{1}{2b^{j-1}}\fint_{\T} \frac{ \tau \partial_zh(\varepsilon \tau \omega )-\partial_zh(\varepsilon \omega)}{\vert \tau - 1\vert }\frac{\partial_z^{k-1}\tilde{V} (\varepsilon \tau \omega )-\partial_z^{k-1}\tilde{V} (\varepsilon \omega)}{ \tau - 1 } g(\tau, \varepsilon \omega)^{-\frac{3}{2}} g(\overline{\tau},\frac{1}{ \varepsilon \omega})^{-\frac{1}{2}} \frac{d\tau}{\tau}\\
&+\frac{1}{2b^{j-1}}\fint_{\T} \frac{ \tau \partial_zh(\varepsilon \tau \omega )-\partial_zh(\varepsilon \omega)}{\vert \tau - 1\vert }\frac{\partial_z^{k-1}\tilde{V} (\frac{\overline\tau }{ \varepsilon \omega} )-\partial_z^{k-1}\tilde{V} (\frac{1}{\varepsilon \omega})}{\overline\tau - 1 } g(\tau, \varepsilon \omega)^{-\frac{1}{2}} g(\overline{\tau},\frac{1}{ \varepsilon \omega})^{-\frac{3}{2}} \frac{d\tau}{\tau}+l.o.t.
\\\end{align*}
By the definition of H\"older spaces
\[ \vert  \partial_z^{k-1}\tilde{V} (\varepsilon^{\pm 1} \tau \omega )- \partial_z^{k-1}\tilde{V}(\varepsilon^{\pm 1} \omega) \vert \leq \varepsilon^{\pm \frac12}\Vert  \partial_z^{k-1}\tilde{V} \Vert_{C^{\frac{1}{2}}(\varepsilon^{\pm1}\T)} \vert \tau-1\vert^{\frac{1}{2}}.\]
Thanks to
\[ \int_{\T} \frac{1}{ \vert \tau-1\vert^{\frac{1}{2}}} |d\tau| < +\infty \]
combined with \eqref{maxinf} we obtain
\[ \| K_3(\varepsilon \cdot)\|_{L^\infty(\T)} \leq C \Big(\Vert\partial_zh\Vert_{L^{\infty}(\varepsilon\T)}+\Vert\partial_z^2h \Vert_{L^{\infty} (\varepsilon\T)} \Big)\Big(  \Vert\partial_z^{k-1}\tilde{V} \Vert_{C^{\frac{1}{2}}(\varepsilon\T)}+ \Vert\partial_z^{k-1}\tilde{V} \Vert_{C^{\frac{1}{2}}(\varepsilon^{-1}\T)}\Big)+...\]
Hence, using  Sobolev embedding we get 
\[ \Vert  K_3(\varepsilon \cdot)\Vert_{L^{\infty} (\T)} \leq C  \Big(\Vert V \Vert_{H^k(\varepsilon\T)}+\Vert V \Vert_{H^k(\varepsilon^{-1}\T)}\Big) \Vert h \Vert_{H^3(\varepsilon\T)} .\]
Now let us move to the estimate of the term  $K_1$ which is  is the most singular one. For this goal we need  the following lemma.
\begin{lemma}\label{exp}
Let be $\varepsilon \in (0,1)$, $\tilde{V} \in \tilde V_r$ and r be small enough. Define for any   $ \tau \in \T$ and  $ z \in \varepsilon\T \cup \varepsilon^{-1} \T$ 
\[g(\tau, z)=1+\frac{\tilde{V}(\tau z)-\tilde{V}(z)}{b^{j-1}z(\tau-1)} . \]
Then
\[ {g(\tau, z)^{-\frac{1}{2}} }= \left(1+\frac{\partial_{z}\tilde{V}(z)}{b^{j-1}} \right)^{-\frac{1}{2}}+{(\tau-1)} H(\tau,z)\]
where $H(\cdot,\varepsilon^{\pm1} \cdot)\in L^{\infty}(\T \times \T)$ and 
\[ \Vert H(\cdot,\varepsilon^{\pm1} \cdot)\Vert_{ L^{\infty}(\T \times \T)} \leq C \Vert \tilde V\Vert_{H^3(\varepsilon^{\pm1}\T)}. \]
\end{lemma}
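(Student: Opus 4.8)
The plan is to Taylor-expand the function $g(\tau,z)$ around $\tau=1$ and isolate the zeroth-order term, which turns out to be $1+\partial_z\tilde V(z)/b^{j-1}$, and then to control the remainder. First I would observe that by the definition
\[
g(\tau,z)=1+\frac{\tilde V(\tau z)-\tilde V(z)}{b^{j-1}z(\tau-1)},
\]
and, since $\tilde V\in\mathcal A_\varepsilon$ is holomorphic on $\overline{\mathbb\Delta_\varepsilon}$, the divided difference $\frac{\tilde V(\tau z)-\tilde V(z)}{z(\tau-1)}$ extends to a jointly continuous function in $(\tau,z)$ even across $\tau=1$: writing $\tilde V(w)=\sum_{n\ge1}a_n w^{-n}$ one gets, for $\tau\ne1$,
\[
\frac{\tilde V(\tau z)-\tilde V(z)}{z(\tau-1)}=\sum_{n\ge1}a_n\,\frac{\tau^{-n}-1}{\tau-1}\,z^{-n-1}
=-\sum_{n\ge1}a_n\bigl(\tau^{-1}+\tau^{-2}+\cdots+\tau^{-n}\bigr)z^{-n-1},
\]
which at $\tau=1$ equals $-\sum_{n\ge1}n\,a_n z^{-n-1}=\partial_z\tilde V(z)$. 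Hence $g(1,z)=1+\partial_z\tilde V(z)/b^{j-1}$, which is exactly the claimed leading term, and (using the estimates \eqref{estimation} and \eqref{maxinf} together with the smallness of $r$) this leading factor stays uniformly bounded away from $\mathbb R_-$, so its inverse square root is well defined and bounded on $\varepsilon^{\pm1}\T$.

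Next I would set
\[
H(\tau,z)\triangleq\frac{g(\tau,z)^{-1/2}-\bigl(1+\partial_z\tilde V(z)/b^{j-1}\bigr)^{-1/2}}{\tau-1},
\]
which is exactly the quantity that must be shown to lie in $L^\infty(\T\times\T)$ after the substitution $z\mapsto\varepsilon^{\pm1}\omega$, with the stated bound. The key point is that the map $w\mapsto w^{-1/2}$ is Lipschitz on a neighborhood of the range of $g$ (which, by \eqref{maxinf} and smallness of $r$, is a fixed compact subset of $\C\setminus\mathbb R_-$ independent of $\tau,z$), so
\[
\bigl|g(\tau,z)^{-1/2}-g(1,z)^{-1/2}\bigr|\le C\,|g(\tau,z)-g(1,z)|,
\]
and it remains to bound $|g(\tau,z)-g(1,z)|$ by $C|\tau-1|\,\Vert\tilde V\Vert_{H^3(\varepsilon^{\pm1}\T)}$. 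From the series above,
\[
g(\tau,z)-g(1,z)=\frac1{b^{j-1}}\sum_{n\ge1}a_n\Bigl(n-\sum_{\ell=1}^{n}\tau^{-\ell}\Bigr)z^{-n-1}
=\frac1{b^{j-1}}\sum_{n\ge1}a_n\sum_{\ell=1}^{n}\bigl(1-\tau^{-\ell}\bigr)z^{-n-1},
\]
and $|1-\tau^{-\ell}|\le\ell\,|\tau-1|$ for $\tau\in\T$, so for $|z|=\varepsilon^{\pm1}$ one gets
\[
|g(\tau,z)-g(1,z)|\le\frac{|\tau-1|}{b^{j-1}}\sum_{n\ge1}|a_n|\,\frac{n(n+1)}{2}\,\varepsilon^{\mp(n+1)}
\le C\,|\tau-1|\,\Vert\tilde V\Vert_{H^3(\varepsilon^{\pm1}\T)},
\]
the last inequality following from Cauchy--Schwarz in $n$ together with the Fourier characterization of $H^3(\varepsilon^{\pm1}\T)$ (the weights $n^3\varepsilon^{\mp n}$ dominate $n^2\varepsilon^{\mp n}$ up to a summable factor). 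Dividing by $|\tau-1|$ yields the claimed bound on $H$. One alternatively obtains the same conclusion more softly by the mean value / maximum principle argument already used for \eqref{estimation}: apply it to the second divided difference $\partial_\tau$ of $\tilde V(\tau z)$ to get $|g(\tau,z)-g(1,z)|\lesssim|\tau-1|\,\Vert\partial_z^2\tilde V\Vert_{L^\infty(\varepsilon^{\pm1}\T)}\lesssim|\tau-1|\,\Vert\tilde V\Vert_{H^3(\varepsilon^{\pm1}\T)}$.

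The main obstacle I anticipate is purely bookkeeping: one must verify that the Lipschitz constant of $w\mapsto w^{-1/2}$ can be taken uniform over all of the relevant values $g(\tau,z)$, which requires the smallness of $r$ to force $g$ into a fixed compact set avoiding $\mathbb R_-$ and $0$ — this is precisely \eqref{maxinf} combined with \eqref{estimation}, so no new ideas are needed. The only genuine subtlety is keeping track of the two boundaries $\varepsilon\T$ and $\varepsilon^{-1}\T$ simultaneously (the powers $z^{-n-1}$ behave oppositely on the two circles), but since the $H^3$ norms on both circles enter the final estimate this causes no difficulty.
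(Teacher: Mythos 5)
Your proposal is correct and follows essentially the same route as the paper: the paper likewise expands $g$ to first order at $\tau=1$, writing $g(\tau,z)=1+\partial_z\tilde V(z)/b^{j-1}+(\tau-1)H_1(\tau,z)$ with $\vert H_1\vert\lesssim\Vert\partial_z^2\tilde V\Vert_{L^\infty(\varepsilon^{\pm1}\T)}$, and then passes to $g^{-\frac12}$ using that $g$ stays near $1$ when $r$ is small. Your Fourier-series/Cauchy--Schwarz verification of that expansion and your use of the Lipschitz property of $w\mapsto w^{-1/2}$ are only minor variants of the paper's Taylor/maximum-principle estimate and its explicit algebraic formula for $H$ (you even note the paper's alternative yourself), so no essential difference or gap.
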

\begin{proof}

We shall only  prove the result for $z \in \varepsilon \T$. Similar computations can be done for $z\in \EE^{-1}\T$.  From Taylor expansion at  the second order we find,\[\forall z \in \varepsilon \T, \,  g(\tau, z)=1+\frac{\partial_{z}\tilde{V}(z)}{b^{j-1}}+(\tau-1)H_1(\tau,z),\]
 such that
\[ \vert H_1(\tau,z) \vert \leq C \Vert \partial_z^2 \tilde{V} \Vert_{L^{\infty}( \varepsilon \T)} .\]
Using Sobolev embeddings we get for $k\geq3$ and $(\tau,z)\in \T\times \varepsilon \T$
\[\vert H_1(\tau,z) \vert \leq C \Vert \tilde V \Vert_{X^{k+\log}}.\]
Finally, from standard computations we obtain the identity
\[g(\tau, z)^{-\frac{1}{2}}=\left(1+\frac{\partial_{z}\tilde{V}(z)}{b^{j-1}}\right)^{-\frac{1}{2}}+(\tau-1)H(\tau, z)\]
with
\[ H(\tau, z)=-\frac{H_1(\tau,z)\left(\sqrt{1+\frac{\partial_{z}\tilde{V}(z)}{b^{j-1}}}+\sqrt{1+\frac{\partial_{z}\tilde{V}(z)}{b^{j-1}}+(\tau-1)H_1(\tau,z)}\right)^{-1}}{\sqrt{1+\frac{\partial_{z}\tilde{V}(z)}{b^{j-1}}}\sqrt{1+\frac{\partial_{z}\tilde{V}(z)}{b^{j-1}}+(\tau-1)H_1(\tau,z)} }.\]
One may easily check that 
\[ H(\cdot, \cdot) \in L^{\infty}(\T \times \EE \T)\]
and  the desired  result follows immediately by choosing the radius $r$ small enough.
\end{proof}
Let us now see how to use the preceding lemma for  estimating  $K_1.$
According to this lemma  one may obtain a constant $ C$ depending on $\varepsilon$ and $b$ such that

\begin{align*}
\Vert K_1(\varepsilon \cdot)\Vert_{L^2(\T)}& \leq C \Vert(1+\frac{\partial_{z}\tilde{V}(\varepsilon \cdot)}{b^{j-1}})^{-\frac{1}{2}} (1+\frac{\partial_{z}\tilde{V}(\frac{1}{\varepsilon \cdot})}{b^{j-1}})^{-\frac{1}{2}}\Vert_{L^{\infty}(\T)}\Big\| \fint_{\T} \frac{\partial_z^kh(\varepsilon \tau \cdot )-\partial_z^kh(\varepsilon \cdot)}{\vert \tau-1\vert}\frac{d\tau}{\tau}\Big\|_{L^2(\T)} \\
&+C \Vert \partial_z^kh(\varepsilon \cdot)\Vert_{L^2(\T)} \leq C \Vert h \Vert_{X^{k+\log}}.
\end{align*}
This concludes the proof of  the Lemma \ref{anal}.
\end{proof}
Now, we are in position to give the proof of  the  Proposition $\ref{biendef}$.
\begin{proof}
Note that for any $\omega\in \T$ one has 
\[G_j(\Omega,f_1,f_2)(\omega)=\frac{F_j(\omega)-F_j(\frac{1}{\omega})}{2i}, \]
with
\begin{align*}
F_j(\omega)&=\Omega\Phi_j(\omega)\Phi_j'\left(\frac{1}{\omega}\right)\frac{1}{\omega}-\Phi_j'\left(\frac{1}{\omega}\right)\frac{1}{\omega}\fint_{\T} \frac{\tau \partial_{\tau} \Phi_1(\tau)-\omega \partial_{\tau} \Phi_j(\omega)}{\vert \Phi_1(\tau)-\Phi_j(\omega)\vert }\frac{d\tau}{\tau}\\
&+ \Phi_j' \left(\frac{1}{\omega}\right) \frac{1}{\omega} \fint_{\T} \frac{\tau \partial_{\tau} \Phi_2(\tau)-\omega \partial_\tau \Phi_j(\omega)}{\vert \Phi_2(\tau)-\Phi_j(\omega)\vert }\frac{d\tau}{\tau}.
\end{align*}
We shall prove that $F_j$ belongs to $\tilde{Y}^{k-1}$. The first term of the right-hand side describing the rotation term belongs to that space. The remaining terms are of two kinds: the self-induced terms and the interaction terms. For the first ones we simply  use Lemma $\ref{anal}$ with $h=V=\Phi_j$. As to the interaction terms, the integrand is nowhere singular because the interfaces do not intersect and therefore they are well estimated. We shall briefly give more explanation about this fact.
Take the term
 \label{Ktilde}
\[ \widehat{K}(\omega)\triangleq \fint_{\T} \frac{\tau\Phi_2'(\tau)-\omega \Phi_1'(\omega)}{\vert \Phi_2(\tau)-\Phi_1(\omega)\vert }\frac{d\tau}{\tau}=\omega\fint_{\T} \frac{\tau \Phi_2'(\tau \omega)- \Phi_1'(\omega)}{\vert \Phi_2(\tau \omega)-\Phi_1(\omega)\vert }\frac{d\tau}{\tau}.\]
As before we write, for any $\tau, \omega \in \T$,
\[\vert \Phi_2(\tau \omega)-\Phi_1(\omega)\vert =\vert b\tau-1\vert \Big(\tilde{g}(\tau , \omega) \tilde{g}(\overline{\tau},\omega^{-1})\Big)^{\frac12}\]
where 
\[ \forall z \in \overline{\mathbb{\Delta}_{\varepsilon}} , \, \tilde{g}(\tau, z) = 1+\frac{f_2(\tau z) -f_1(z)}{(b\tau -1)z}.\]
From the maximum principe,
\[ \Big|\frac{f_2(\tau z) -f_1(z)}{(b\tau -1)z} \Big| \leq \varepsilon \frac{ \Vert f_2(\varepsilon \cdot) \Vert_{L^{\infty}(\T) }+\Vert f_1(\varepsilon \cdot) \Vert_{L^{\infty}(\T)}}{1-b}\le \varepsilon \frac{2r}{1-b}\cdot\]
Hence 
\[\widehat{K}(z)=z\fint_{\T} \frac{\tau \Phi_2'(\tau z)-\Phi_1'(z)}{\vert b\tau-1\vert } \tilde{g}(\tau, z)^{-\frac{1}{2}} \tilde{g}(\overline{\tau}, \frac{1}{z})^{-\frac{1}{2}} \frac{d\tau}{\tau}.\]
Note that
\[ 0<1-b\leq \vert b\tau-1 \vert \leq 1+b\]
and 
consequently the integrand is less singular than those of the self-induced terms and thus one can find that 
$\widehat{K}$ is analytic in $C_{\varepsilon}$ and belongs to $ \tilde{Y}^{k-1}.$
At this stage we have shown that $F_j$ belongs to the space $ \tilde{Y}^{k-1} $ and to achieve the proof of the proposition it remains to check that the Fourier coefficients of $G_j(\Omega,f_1,f_2)$ belong to $i\R$. By the assumptions, the Fourier coefficients of $\Phi_j=b^{j-1}Id+f_j$ are real and thus the coefficient of $\overline{\Phi_j'}$ are real too. From the stability of this property under the multiplication and the conjugation we deduce that the Fourier coefficients of $\omega \mapsto \Omega \Phi_j(\omega)\overline{\Phi_j'(\omega)}\overline{\omega}$ are real. To end  the proof we shall check that the Fourier coefficients of  $S(\Phi_i,\Phi_j)$ for $i,j\in \{1 , 2 \}$ are real.
We have
 \[ S(\Phi_i,\Phi_j)(\omega)=\sum_{n \in \Z} a_n \omega^n \text{, } a_n=\fint_{\T} \frac{S(\Phi_i,\Phi_j)(\omega)}{\omega^{n+1}} d \omega= \fint_{\T} \fint_{\T}\frac{\tau \Phi_i'(\tau)-\omega \Phi_j'(\omega)}{\vert \Phi_i(\tau)-\Phi_j(\omega) \vert} \frac{d \tau}{\tau}\frac{d \omega}{\omega^{n+1}}.\]
 The coefficient can also be written in the form
 \[ a_n=\frac{1}{4 \pi^2} \int_0^{2 \pi} \int_0^{2 \pi}\frac{e^{i\theta}\Phi_i'(e^{i\theta})-e^{i\eta}\Phi_j'(e^{i\eta})}{\vert \Phi_i(e^{i\theta})-\Phi_j(e^{i\eta})\vert}e^{-in\eta} d\theta d\eta.\]
By taking the conjugate of $a_n$ and using the properties:
\[ \overline{\Phi_i(e^{i\theta})}=\Phi_i(e^{-i\theta}) \text{, } \overline{\Phi_i'(e^{i\theta})}=\Phi_i'(e^{-i\theta})  \text{ and } \vert z \vert =\vert \overline{z}\vert. \]
One may obtain by change of variable
\begin{align*}
\overline{a_n}=& \frac{1}{4 \pi^2} \int_0^{2 \pi}\int_0^{2 \pi}\frac{e^{-i\theta}\Phi_i'(e^{-i\theta})-e^{-i\eta}\Phi_j'(e^{-i\eta})}{\vert \Phi_i(e^{-i\theta})-\Phi_j(e^{-i\eta})\vert}e^{in\eta} d\theta d\eta\\
=&\frac{1}{4 \pi^2} \int_0^{2 \pi} \int_0^{2 \pi}\frac{e^{i\theta}\Phi_i'(e^{i\theta})-e^{i\eta}\Phi_j'(e^{i\eta})}{\vert \Phi_i(e^{i\theta})-\Phi_j(e^{i\eta})\vert}e^{-in\eta} d\theta d\eta=a_n.
\end{align*}
Consequently the Fourier coefficients of $S(\Phi_i,\Phi_j)$ are real and therefore $G_j(\Omega,\Phi_1,\Phi_2)$ belongs to the space $Y^{k-1}$ and the proof of Proposition \ref{biendef} is now achieved.
\end{proof}

\subsection{Regularity} The goal of this section is to study the  strong regularity of $G_j$ and the main result reads as follows.
\begin{proposition}\label{regularite}
For  $j\in \lbrace 1,2 \rbrace$ and for any $ k \geq 3$, there exists $r \in (0,1)$ such that,
 \[ \begin{array}{lll}
    G_j: &\R\times V_r  \times V_r &\longrightarrow Y^{k-1}\\
         &(\Omega,f_1,f_2) &\longmapsto G_j(\Omega,f_1,f_2)
    \end{array} \]
 is of class $C^1$, where $V_r=\big\{f\in X^{k+\log} |  \text{ }\Vert f \Vert_{X^{k+\log}} \leq r\big\}$.
\end{proposition}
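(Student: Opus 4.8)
The plan is to show that $G_j$ possesses continuous partial (Gâteaux) derivatives on $\R\times V_r\times V_r$ with values in $Y^{k-1}$; by the classical criterion, existence and continuity of all partial derivatives force $G_j$ to be Fréchet differentiable of class $C^1$. Since $\Omega\mapsto G_j(\Omega,f_1,f_2)$ is affine, $\partial_\Omega G_j(\Omega,f_1,f_2)\colon\lambda\mapsto \lambda\,\textnormal{Im}\big\{\Phi_j(\omega)\overline{\Phi_j'(\omega)}\overline\omega\big\}$ is manifestly continuous into $Y^{k-1}$ (a polynomial expression already handled in Proposition~\ref{biendef}), so the whole issue is differentiability in $f_1$ and $f_2$. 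Using the decomposition $G_j=\tfrac{1}{2i}\big(F_j(\omega)-F_j(1/\omega)\big)$ from the proof of Proposition~\ref{biendef}, it suffices to differentiate each of the three building blocks of $F_j$: the rotation term $\Omega\Phi_j(\omega)\Phi_j'(1/\omega)\tfrac{1}{\omega}$, the self-induced terms $S(\Phi_j,\Phi_j)$, and the interaction terms $S(\Phi_i,\Phi_j)$ with $i\neq j$.

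Next I would write down the directional derivative in a direction $(h_1,h_2)\in X^{k+\log}\times X^{k+\log}$. The rotation term is differentiated by Leibniz's rule and stays in $\tilde Y^{k-1}$. For $S(\Phi_i,\Phi_j)$, writing $\Phi_\ell=b^{\ell-1}\textnormal{Id}+f_\ell$ and differentiating
\[
S(\Phi_i,\Phi_j)(\omega)=\fint_{\T}\frac{\tau\Phi_i'(\tau)-\omega\Phi_j'(\omega)}{|\Phi_i(\tau)-\Phi_j(\omega)|}\frac{d\tau}{\tau},
\]
one gets two contributions. Differentiating the numerator yields an operator of exactly the type controlled by Lemma~\ref{anal} (with $h$ replaced by $h_i$ or $h_j$ and $V=\Phi_i$ or $\Phi_j$). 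Differentiating the denominator yields the more singular piece
\[
-\fint_{\T}\big(\tau\Phi_i'(\tau)-\omega\Phi_j'(\omega)\big)\,\frac{\textnormal{Re}\big[\overline{(\Phi_i(\tau)-\Phi_j(\omega))}\,(h_i(\tau)-h_j(\omega))\big]}{|\Phi_i(\tau)-\Phi_j(\omega)|^{3}}\,\frac{d\tau}{\tau},
\]
in which the cube of the small denominator appears. For $i\neq j$ the interfaces are disjoint, all kernels are nowhere singular, and every such term is estimated crudely as the quantity $\widehat K$ in the proof of Proposition~\ref{biendef}; the real work is therefore the self-induced case $i=j$.

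For the self-induced terms I would reuse, after the change of variables $\tau\mapsto\tau\omega$, the factorization $|V(\tau\omega)-V(\omega)|^2=b^{2(j-1)}|\tau-1|^2\,g(\tau,\omega)\,g(\overline\tau,\omega^{-1})$ of Lemma~\ref{anal}, so that the cubic denominator becomes $b^{-3(j-1)}|\tau-1|^{-3}(g\,g)^{-3/2}$; I would need the obvious analogue of Lemma~\ref{exp}, namely $g(\tau,z)^{-3/2}=\big(1+\partial_z\tilde V(z)/b^{j-1}\big)^{-3/2}+(\tau-1)\widetilde H(\tau,z)$ with $\widetilde H(\cdot,\varepsilon^{\pm1}\cdot)\in L^{\infty}(\T\times\T)$. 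The three extra powers of $(\tau-1)$ in the denominator are then compensated: one by the numerator difference $\tau\Phi_j'(\tau\omega)-\Phi_j'(\omega)=O(|\tau-1|)$, one by $h_j(\tau\omega)-h_j(\omega)=O(|\tau-1|)$, and the last by a combination of the leftover of the $g^{-3/2}$ expansion and of $\textnormal{Re}[\,\overline{(\cdot)}\,(\cdot)\,]$, so that after also paying a half-power through the Hölder control $X^{k+\log}\hookrightarrow C^{1/2}$ one is left with an integrable kernel ($\int_{\T}|\tau-1|^{-1/2}|d\tau|<\infty$). The holomorphic extension to $C_\varepsilon$, the $L^2$ bounds of the inner and outer restrictions, and the $L^2$ bound on $\partial_z^{k-1}$ are obtained exactly as in the proof of Proposition~\ref{biendef}: the only genuinely singular summand is the analogue of the term $K_1$ there, which carries $\partial_z^{k}h_i$ (coming from $\partial_z^{k-1}$ hitting the differentiated numerator) and is absorbed by $\|h_i\|_{X^{k+\log}}$ thanks to the logarithmic weight in the definition of that norm. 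As all the objects involved are built from functions with real Fourier coefficients, the differential lands in $Y^{k-1}$ and not merely in $\tilde Y^{k-1}$.

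It remains to prove that $(\Omega,f_1,f_2)\mapsto DG_j(\Omega,f_1,f_2)$ is continuous as a map into $\mathcal{L}\big(\R\times X^{k+\log}\times X^{k+\log},\,Y^{k-1}\big)$. Each entry of $DG_j$ is an explicit integral operator whose kernel depends on $(f_1,f_2)$ only through the auxiliary functions $g$ (equivalently, through $\tilde V=f_\ell$) and through the smooth nonlinearities $t\mapsto t^{-1/2},t^{-3/2}$ evaluated at quantities staying away from $\R_-$ for $r$ small; since the relevant quotients $\big(\tilde V(\tau z)-\tilde V(z)\big)/\big(z(\tau-1)\big)$ and $\big(\partial_z^\ell\tilde V(\tau z)-\partial_z^\ell\tilde V(z)\big)/(\tau-1)$ are Lipschitz functions of $\tilde V\in X^{k+\log}$ (by the Sobolev embeddings used throughout Section~\ref{RegulaT}), a telescoping argument together with the boundedness estimates of the previous step gives the desired continuity. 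Combining this with the existence of the Gâteaux derivatives yields $G_j\in C^1$. The main obstacle is purely the bookkeeping for the cubic-denominator terms carried through $k-1$ derivatives, but this is a controlled repetition of the computations already performed for Lemma~\ref{anal}, Lemma~\ref{exp} and Proposition~\ref{biendef}, not a new analytic difficulty.
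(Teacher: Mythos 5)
Your proposal takes essentially the same route as the paper's proof: compute the G\^ateaux derivative of the rotation, self-induced and interaction pieces, handle the cubic-denominator terms through the factorization $\vert V(\tau\omega)-V(\omega)\vert^2=b^{2(j-1)}\vert\tau-1\vert^2 g(\tau,\omega)g(\overline{\tau},\omega^{-1})$ together with a Lemma~\ref{exp}-type expansion (the leading $K_1$-type term being absorbed by the logarithmic norm, the remainder by the $C^{1/2}$/integrable-kernel argument), and then obtain $C^1$ from continuity of the derivative via telescoping differences in $f_j$ --- which is exactly the paper's scheme (its terms $I_1$--$I_5$, the model integral $P$, and $\tilde I_1$--$\tilde I_9$). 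The differences are only presentational (the paper additionally writes out the convergence of the difference quotients with explicit powers of $t$, and your attribution of the third power of $\vert\tau-1\vert$ is really supplied by $\Phi_j(\tau)-\Phi_j(\omega)$ inside the real part), so the proposal is correct and essentially identical in approach.
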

\begin{proof}
To prove that $G_j$ is of class $C^1$  we shall first check the existence of its G\^{a}teaux derivative. Second, we will show that this derivative is strongly continuous, and therefore it will necessary coincide with  the Fr\'echet derivative. This will  answer the $C^1$ regularity.
We split  $G_j$ into two terms, the self-induced term and the interaction term,
\[ G_j(\Omega,f_1,f_2)=O_j(\Omega,f_j)+N_j(f_1,f_2) \text{, }j\in \lbrace 1,2 \rbrace \]
with
\[\forall \omega \in \T, \; O_j(\Omega,f_j)(\omega)\triangleq \textnormal{Im} \left\lbrace \left(\Omega\Phi_j(\omega)+(-1)^j S(\Phi_j,\Phi_j)(\omega)\right) \overline{\omega}\overline{\Phi_j'(\omega)}\right\rbrace\]
and
\[N_j(f_1,f_2)(\omega)\triangleq (-1)^{j-1} \textnormal{Im}\left\lbrace S(\Phi_i,\Phi_j)(\omega)\overline{\omega} \overline{\Phi_j'(\omega)} \right\rbrace \text{, } i \neq j .\]
The G\^{a}teaux derivative of  $G_j$ at $(f_1,f_2)$ in the direction $(h_1,h_2)$ is given by the formula:
\begin{eqnarray}\label{diff}
\nonumber DG_j(\Omega,f_1,f_2)(h_1,h_2)&=&DO_j(\Omega,f_j)h_j+ DN_j(f_1,f_2)(h_1,h_2)\\
\nonumber &\triangleq& \underset{t \rightarrow 0}{ \lim } \frac{1}{t}\left[ O_j(\Omega,f_j+th_j)-O_j(\Omega,f_j)\right]+\underset{t \rightarrow 0}{ \lim } \frac{1}{t}\left[ N_j(f_1+th_1,f_2+th_2)-N_j(f_1,f_2)\right]\\
&=&\frac{d}{dt}|_{ t=0}O_j(\Omega,f_j+th_j)+\left.\frac{d}{dt}\right|_{ t=0}N_j(f_1+th_1,f_2+th_2),
\end{eqnarray}
where the limits are taken in the strong topology of $Y^{k-1}$. Once we have checked the existence of these quantities, it remains  to verify that 
the functions,
\[F_1(t,\omega)\triangleq\frac{1}{t}\Big[ O_j(\Omega,f_j+th_j)(\omega)-O_j(\Omega,f_j)(\omega)\Big]-\left.\frac{d}{dt}\right|_{ t=0} O_j(\Omega,f_j+th_j)(\omega)\]
and
\[F_2(t,\omega)\triangleq\frac{1}{t}\Big[ N_j(\Omega,f_1+th_1,f_2+th_2)(\omega)-N_j(\Omega,f_1,f_2)(\omega)\Big]-\left.\frac{d}{dt}\right|_{ t=0} N_j(\Omega,f_1+th_1,f_2+th_2)(\omega)\]
 can be analytically extended on $C_{\varepsilon}$, and their extension, still denoted by  $F_j$, satisfy
\[ \underset{t\rightarrow 0}{\lim} \Vert F_j(t)\Vert_{Y^{k-1}}=0.\]
The existence of  G\^{a}teaux derivative can be done in a straightforward way and one readily gets
\begin{eqnarray}\label{lj}
\nonumber DO_j(\Omega,f_j)h_j(\omega)&=&\textnormal{Im} \left\lbrace \Omega \left(\Phi_j(\omega)\overline{h_j'(\omega)}+\overline{\Phi_j'(\omega)}h_j(\omega) \right) \overline{\omega}+(-1)^j \overline{h_j'(\omega)}\overline{\omega}\fint_{\T}  \frac{\tau\Phi_j'(\tau)-\omega \Phi_j'(\omega)}{\vert \Phi_j(\omega)-\Phi_j(\tau)\vert }\frac{d\tau}{\tau} \right\rbrace\\
\nonumber&+&(-1)^{j-1}\textnormal{Im} \left\lbrace \overline{\Phi_j'(\omega)}\overline{\omega}\fint_{\T}  \frac{(\tau\Phi_j'(\tau)-\omega \Phi_j'(\omega))\textnormal{Re}\bigg( \Big( \overline{h_j(\tau)-h_j(\omega)}\Big) \Big(\Phi_j(\tau)-\Phi_j(\omega) \Big) \bigg)}{\vert \Phi_j(\tau)-\Phi_j(\omega) \vert^3} \frac{d\tau}{\tau} \right\rbrace \\
&+&(-1)^j\textnormal{Im} \left\lbrace \overline{\Phi_j'(\omega)}\overline{\omega}\fint_{\T}  \frac{\tau h_j'(\tau)-\omega h_j'(\omega)}{\vert \Phi_j(\omega)-\Phi_j(\tau)\vert }\frac{d\tau}{\tau} \right\rbrace
\end{eqnarray}
and
\begin{eqnarray}\label{Nj}
\nonumber DN_j(\Omega,f_1,f_2)(h_1,h_2)(\omega)&=&(-1)^{j-1} \textnormal{Im} \Bigg\{ \overline{h_j'(\omega)}\overline{\omega}\fint_{\T}  \frac{\tau\Phi_i'(\tau)-\omega \Phi_j'(\omega)}{\vert \Phi_i(\omega)-\Phi_j(\tau)\vert }\frac{d\tau}{\tau} \\
\nonumber&+&  \overline{\Phi_j'(\omega)}\overline{\omega} \fint_{\T}  \frac{\tau h_i'(\tau)-\omega h_j'(\omega)}{\vert \Phi_i(\omega)-\Phi_j(\tau)\vert }\frac{d\tau}{\tau} \\
&-&\overline{\Phi_j'(\omega)}\overline{\omega}  \fint_{\T} \frac{[\tau \Phi_i'(\tau)-\omega \Phi_j'(\omega)]\textnormal{Re} \bigg( \Big(\overline{h_i(\tau)-h_j(\omega)} \Big) \Big(\Phi_i(\tau)-\Phi_j(\omega)\Big)\bigg)}{\vert \Phi_i(\tau)-\Phi_j(\omega)\vert^3} \frac{d \tau}{\tau} \Bigg\}.
\end{eqnarray}
First we note that  $F_1(t,\omega)$ can be written in the form 
\[ F_1(t,\omega)=\sum_{l=1}^5 \frac{I_l(t,\omega)-I_l(t,\omega^{-1})}{2i}\]
with
\begin{eqnarray*}
I_1(t,\omega)&=&  \frac{\overline{\Phi_j'(\omega)}\overline{\omega}}{t}\fint_{\T}  [\tau \Phi_j'(\tau)-\omega \Phi_j'(\omega)] \left[ \frac{1}{\Delta^t_{\Phi_j}( h_j)(\tau, \omega)}-\frac{1}{\Delta\Phi_j(\tau, \omega)} \right] \frac{d\tau}{\tau} \\ 
&&+\overline{\Phi_j'(\omega)}\overline{\omega} \fint_{\T}  [\tau \Phi_j'(\tau)-\omega \Phi_j'(\omega)] \left[ \frac{\textnormal{Re}\bigg( \Big(\overline{h_j(\tau)-h_j(\omega)} \Big) \Big(\Phi_j(\tau)-\Phi_j(\omega) \Big) \bigg)}{[\Delta\Phi_j(\tau, \omega)]^3} \right] \frac{d\tau}{\tau}  ,
\end{eqnarray*}  
\begin{eqnarray*}
I_2(t,\omega) &=& \overline{\Phi_j'(\omega)}\overline{\omega}\fint_{\T}  [\tau h_j'(\tau)-\omega h_j'(\omega)] \left[ \frac{1}{\Delta^t_{\Phi_j}( h_j)(\tau, \omega)}-\frac{1}{\Delta\Phi_j(\tau, \omega)} \right] \frac{d\tau}{\tau} , \\
I_3(t,\omega)&=& \overline{h_j'(\omega)}\overline{\omega}\fint_{\T}  [\tau \Phi_j'(\tau)-\omega \Phi_j'(\omega)] \left[ \frac{1}{\Delta^t_{\Phi_j}( h_j)(\tau, \omega)}-\frac{1}{\Delta\Phi_j(\tau, \omega)} \right] \frac{d\tau}{\tau} ,\\
I_4(t,\omega)&=& t\overline{h_j'(\omega)}\overline{\omega}\fint_{\T}  \frac{\tau h_j'(\tau)-\omega h_j'(\omega)}{\Delta^t_{\Phi_j} (h_j)(\tau, \omega)} \frac{d\tau}{\tau} ,\\
I_5(t,\omega)&=& t\Omega \overline{\omega}\overline{h_j'(\omega)}h_j(\omega).
\end{eqnarray*} 
We have use the following notations, 
\[\Delta\Phi_j(\tau, \omega)=\vert \Phi_j(\tau)-\Phi_j(\omega)\vert \]
and
\[\Delta^t_{\Phi_j} (h_j)(\tau,\omega)= \vert \Phi_j(\tau)+th_j(\tau)-\Phi_j(\omega)-th_j(\omega) \vert. \]
First, it is not difficult to check the following limit
\[  \underset{t\rightarrow 0}{\lim} \Vert I_5(t)\Vert_{\tilde{Y}^{k-1}}=0\]
Moreover, if t is small enought, one may use the Lemma \ref{anal} with $h=h_j$ and $V=\Phi_j+th_j$ to etablish
\[  \underset{t\rightarrow 0}{\lim} \Vert I_4(t)\Vert_{\tilde{Y}^{k-1}}=0\]
We have to rewrite the terms $I_1,I_2$ and $I_3$ to compute theirs limits. 
We begin to rewrite one part of the integrand term:
\begin{equation}\label{difffrac}
\frac{1}{\Delta^t_{\Phi_j}( h_j)(\tau,\omega)}-\frac{1}{\Delta\Phi_j(\tau,\omega)}=\frac{\Big(\Delta\Phi_j(\tau,\omega)\Big)^2-\left(\Delta^t_{\Phi_j}( h_j)(\tau,\omega)\right)^2}{\Big(\Delta\Phi_j(\tau,\omega) \Big) \left( \Delta^t_{\Phi_j}( h_j )(\tau,\omega)\right)\left(\Delta^t_{\Phi_j}( h_j)(\tau,\omega)+\Delta\Phi_j(\tau,\omega) \right)}.
\end{equation}
 Then we display the dependency on t in the numerator
\begin{equation}\label{dept}
\Big( \Delta\Phi_j(\tau,\omega)\Big)^2-\left( \Delta^t_{\Phi_j}( h_j)(\tau,\omega)\right)^2=-t\left[2\textnormal{Re}\bigg(\overline{\Big(h_j(\tau)-h_j(\omega)\Big)}\Big(\Phi_j(\tau)-\Phi_j(\omega)\Big)\bigg)\right]-t^2\vert h_j(\tau)-h_j(\omega)\vert^2.
\end{equation}
Moreover, straightforward manipulations lead to the following identity usefull for the term $I_3$
\begin{eqnarray}\label{diffrac2}
&\nonumber \frac{1}{\left(\Delta\Phi_j(\tau, \omega)\right)^3}-\frac{2}{\left(\Delta\Phi_j(\tau,\omega) \right) \left(\Delta^t_{\Phi_j}( h_j)( \tau,\omega)\right) \left(\Delta^t_{\Phi_j} (h_j)(\tau, \omega)+\Delta\Phi_j(\tau,\omega) \right)}=\\
 &\frac{\left[\Delta^t_{\Phi_j} (h_j)(\tau,\omega)\right]^2-\left[\Delta\Phi_j(\tau,\omega) \right]^2}{\left[\Delta\Phi_j(\tau,\omega) \right]^3 \left[\Delta^t_{\Phi_j}( h_j)(\tau,\omega) \right] \left[\Delta^t_{\Phi_j}( h_j)(\tau,\omega)+\Delta\Phi_j(\tau,\omega) \right]}+\frac{\left[\Delta^t_{\Phi_j}( h_j)(\tau,\omega)\right]^2-\left[\Delta\Phi_j(\tau,\omega)\right]^2}{\left(\Delta^t_{\Phi_j}( h_j)(\tau,\omega)\right) \left(\Delta\Phi_j(\tau,\omega)\right)^2 \left(\Delta^t_{\Phi_j}( h_j) (\tau,\omega)+ \Delta\Phi_j(\tau,\omega)\right)^2}. 
\end{eqnarray}
Thanks to $(\ref{difffrac})$,$(\ref{dept})$,we rewrite the terms $I_2$ and $I_3$ .
\begin{eqnarray*}
I_2(t,\omega) &=&-t^2 \overline{\Phi_j'(\omega)}\overline{\omega}\fint_{\T}   \frac{\left[\tau h_j'(\tau)-\omega h_j'(\omega) \right] \vert h_j(\tau)-h_j(\omega)\vert^2}{\Big(\Delta\Phi_j(\tau,\omega) \Big) \left( \Delta^t_{\Phi_j}( h_j )(\tau,\omega)\right)\left(\Delta^t_{\Phi_j}( h_j)(\tau,\omega)+\Delta\Phi_j(\tau,\omega) \right)} \frac{d\tau}{\tau} , \\
&&-t\overline{\Phi_j'(\omega)}\overline{\omega}\fint_{\T}  \frac{ \left[\tau h_j'(\tau)-\omega h_j'(\omega) \right]\left[2\textnormal{Re}\bigg(\overline{\Big(h_j(\tau)-h_j(\omega)\Big)}\Big(\Phi_j(\tau)-\Phi_j(\omega)\Big)\bigg)\right]}{\Big(\Delta\Phi_j(\tau,\omega) \Big) \left( \Delta^t_{\Phi_j}( h_j )(\tau,\omega)\right)\left(\Delta^t_{\Phi_j}( h_j)(\tau,\omega)+\Delta\Phi_j(\tau,\omega) \right)} \frac{d\tau}{\tau} , \\
I_3(t,\omega)&=& -t^2\overline{h_j'(\omega)}\overline{\omega}\fint_{\T}  \frac{\left[\tau \Phi_j'(\tau)-\omega \Phi_j'(\omega)\right] \vert h_j(\tau)-h_j(\omega)\vert^2}{\Big(\Delta\Phi_j(\tau,\omega) \Big) \left( \Delta^t_{\Phi_j}( h_j )(\tau,\omega)\right)\left(\Delta^t_{\Phi_j}( h_j)(\tau,\omega)+\Delta\Phi_j(\tau,\omega) \right)}  \frac{d\tau}{\tau} \\
&&-t \overline{h_j'(\omega)}\overline{\omega}\fint_{\T} \frac{ \left[\tau \Phi_j'(\tau)-\omega \Phi_j'(\omega)\right] \left[2\textnormal{Re}\bigg(\overline{\Big(h_j(\tau)-h_j(\omega)\Big)}\Big(\Phi_j(\tau)-\Phi_j(\omega)\Big)\bigg)\right]}{\Big(\Delta\Phi_j(\tau,\omega) \Big) \left( \Delta^t_{\Phi_j}( h_j )(\tau,\omega)\right)\left(\Delta^t_{\Phi_j}( h_j)(\tau,\omega)+\Delta\Phi_j(\tau,\omega) \right)}  \frac{d\tau}{\tau} 
\end{eqnarray*}
Moreover, using in addition $\ref{diffrac2}$ we can also rewrite $I_1$.
\begin{eqnarray*}
I_1(t,\omega)&=& -t \overline{\Phi_j'(\omega)}\overline{\omega}\fint_{\T}   \frac{\left[\tau \Phi_j'(\tau)-\omega \Phi_j'(\omega)\right]\vert h_j(\tau)-h_j(\omega)\vert^2}{\Big(\Delta\Phi_j(\tau,\omega) \Big) \left( \Delta^t_{\Phi_j}( h_j )(\tau,\omega)\right)\left(\Delta^t_{\Phi_j}( h_j)(\tau,\omega)+\Delta\Phi_j(\tau,\omega) \right)} \frac{d\tau}{\tau} \\
&&-t^2 \overline{\Phi_j'(\omega)}\overline{\omega}\fint_{\T}   \frac{\left[\tau \Phi_j'(\tau)-\omega \Phi_j'(\omega)\right]\left[\textnormal{Re}\bigg(\overline{\Big(h_j(\tau)-h_j(\omega)\Big)}\Big(\Phi_j(\tau)-\Phi_j(\omega)\Big)\bigg)\right] \vert h_j(\tau)-h_j(\omega)\vert^2}{\Big(\Delta\Phi_j(\tau,\omega) \Big)^3 \left(\Delta^t_{\Phi_j}( h_j)(\tau,\omega) \right) \left(\Delta^t_{\Phi_j}( h_j)(\tau,\omega)+\Delta\Phi_j(\tau,\omega) \right)} \frac{d\tau}{\tau} \\
&&-t^2 \overline{\Phi_j'(\omega)}\overline{\omega}\fint_{\T}   \frac{\left[\tau \Phi_j'(\tau)-\omega \Phi_j'(\omega)\right]\left[\textnormal{Re}\bigg(\overline{\Big(h_j(\tau)-h_j(\omega)\Big)}\Big(\Phi_j(\tau)-\Phi_j(\omega)\Big)\bigg)\right]\vert h_j(\tau)-h_j(\omega)\vert^2}{\left(\Delta^t_{\Phi_j}( h_j)(\tau,\omega)\right) \Big(\Delta\Phi_j(\tau,\omega)\Big)^2 \left(\Delta^t_{\Phi_j}( h_j) (\tau,\omega)+ \Delta\Phi_j(\tau,\omega)\right)^2} \frac{d\tau}{\tau} \\ 
&&-2t \overline{\Phi_j'(\omega)}\overline{\omega}\fint_{\T}   \frac{\left[\tau \Phi_j'(\tau)-\omega \Phi_j'(\omega)\right]\left[\textnormal{Re}\bigg(\overline{\Big(h_j(\tau)-h_j(\omega)\Big)}\Big(\Phi_j(\tau)-\Phi_j(\omega)\Big)\bigg)\right]^2}{\Big(\Delta\Phi_j(\tau,\omega) \Big)^3 \left(\Delta^t_{\Phi_j}( h_j)(\tau,\omega) \right) \left( \Delta^t_{\Phi_j}( h_j)(\tau,\omega)+\Delta\Phi_j(\tau,\omega) \right)} \frac{d\tau}{\tau} \\
&&-2t \overline{\Phi_j'(\omega)}\overline{\omega}\fint_{\T}   \frac{\left[\tau \Phi_j'(\tau)-\omega \Phi_j'(\omega)\right]\left[\textnormal{Re}\bigg(\overline{\Big(h_j(\tau)-h_j(\omega)\Big)}\Big(\Phi_j(\tau)-\Phi_j(\omega)\Big)\bigg)\right]^2}{\left(\Delta^t_{\Phi_j}( h_j)(\tau,\omega)\right) \Big(\Delta\Phi_j(\tau,\omega)\big)^2 \left(\Delta^t_{\Phi_j}( h_j) (\tau,\omega)+ \Delta\Phi_j(\tau,\omega)\right)^2} \frac{d\tau}{\tau} 
\end{eqnarray*}  
One may see that we just need to check that the integral term of $I_i(t,\omega)$ belongs to $\tilde{Y}^{k-1}$. We introduce a model integral term, the others term are controled in a similarly way. For any $\omega \in \T$,
\begin{align*}
P(\omega) &\triangleq \fint_{\T}  \frac{\Big(\tau \Phi_j'(\tau)-\omega \Phi_j'(\omega)\Big) \Big( h(\tau)-h(\omega)\Big) \Big( h(\overline{\tau})-h(\overline{\omega}) \Big)}{\left(\Delta^t_{\Phi_j}( h_j) (\tau,\omega)+\Delta\Phi_j(\tau,\omega)\right) \Big(\Delta\Phi_j(\tau,\omega)\Big) \left(\Delta^t_{\Phi_j}( h_j) (\tau,\omega)\right)}\frac{d\tau}{\tau} \\
&= \omega \fint_{\T}  \frac{\Big((\tau-1) \Phi_j'(\tau\omega)+\Phi_j'(\tau\omega)- \Phi_j'(\omega) \Big) \Big( h_j(\tau\omega)-h_j(\omega)\Big)  \Big( h_j(\frac{\overline{\tau}}{\omega})-h_j(\frac{1}{\omega}) \Big)}{\left(\Delta^t_{\Phi_j}( h_j) (\tau \omega,\omega)+\Delta\Phi_j(\tau \omega ,\omega)\right) \Big(\Delta\Phi_j(\tau \omega,\omega)\Big) \left(\Delta^t_{\Phi_j}( h_j) (\tau \omega,\omega)\right)}\frac{d\tau}{\tau}
\end{align*}
Following the same idea of the Lemma $\ref{anal}$, we can write
\[ \frac{1}{\Delta\Phi_j(\tau \omega,\omega) +\Delta^t_{\Phi_j}( h_j)(\tau \omega,\omega)}=\frac{1}{b^{j-1}\vert \tau-1\vert}  \frac{1}{\sqrt{g_j(\tau,\omega) g_j(\overline{\tau},\frac{1}{\omega})}+\sqrt{\tilde{g}_j(\tau,\omega)\tilde{g}_j(\overline{\tau},\frac{1}{\omega})}} .\]
Where $g_j$ and $\tilde{g}_j$ can be extended in the usual ways as follows,
\[\forall z \in \overline{\mathbb{\Delta}_{\varepsilon}},  \; g_j(\tau,z)=1+\frac{f_j(\tau z)-f_j(z)}{b^{j-1}z(\tau-1)} \text{ and } \tilde{g}_j(\tau, z)=g_j(\tau,z)+t \frac{h_j(\tau z)-h_j(z)}{b^{j-1}z(\tau-1)}.\]
As before we can extend P analytically in $C_\varepsilon$ and control the $L^2$ norm of the inner restriction $\omega \in \T \mapsto P(\varepsilon^{\pm1} \omega)$.
We just give few details to control the $L^2$ norm  of the leading term of $\partial_z^{k-1}P(\varepsilon \cdot)$, the proof for the control of $\partial_z^{k-1}P(\varepsilon^{-1} \cdot)$ is similar. Using the  same arguments than before we may write for $z \in \overline{\mathcal{C}_{\varepsilon}}$
\[ \partial_z^{k-1}P(z)=z \fint_{\T}  \frac{\Big(( \partial_z^k \Phi_j )(\tau z)- ( \partial_z^k \Phi_j )(z)\Big) \Big(h_j(\tau z)-h_j(z)\Big) \Big(h_j(\frac{\overline{\tau}}{z})-h_j(\frac{1}{z})\Big) g_j(\tau,z)^{-\frac{1}{2}}g_j(\overline{\tau},\frac{1}{z})^{-\frac{1}{2}}}{b^{3(j-1)}\vert \tau-1\vert^3 \Big( \sqrt{g_j(\tau,z) g_j(\overline{\tau},\frac{1}{z})}+\sqrt{\tilde{g}_j(\tau,z)\tilde{g}_j(\overline{\tau},\frac{1}{z})}\Big) \sqrt{\tilde{g}_j(\tau,z) \tilde{g}_j(\overline{\tau},\frac{1}{z})}} \frac{d\tau}{\tau}+ l.o.t\]
Applying the Lemma $\ref{exp}$ with $\tilde{V_1}=f_j$ and $\tilde{V_2}=f_j+th_j$ we can etablish for $z \in \varepsilon \T \cup \varepsilon^{-1} \T$ the following identity
\begin{align*}
&\frac{ \Big(\sqrt{g(\tau,z) g(\overline{\tau},\frac{1}{z})}+\sqrt{\tilde{g}(\tau,z)\tilde{g}(\overline{\tau},\frac{1}{z})}\Big)^{-1} }{\vert \tau-1\vert}=\frac{(\tau-1)}{\vert \tau-1 \vert}H(\tau,z)\\
&+\frac{\bigg( \sqrt{(1+\frac{(\partial_z f_j)(z)}{b^{j-1}})(1+\frac{(\partial_z f_j)(\frac{1}{z})}{b^{j-1}})}+\sqrt{(1+\frac{\big(\partial_z(f_j+th_j)\big)(z)}{b^{j-1}}\big) \big(1+\frac{\big(\partial_z (f_j+th_j) \big)(\frac{1}{z})}{b^{j-1}}\big)}\bigg)^{-1}}{\vert \tau-1 \vert}.
\end{align*}
With $H(\cdot ,\varepsilon^{\pm} \cdot)\in L^{\infty}(\T \times \T)$.\\
Consequently, this identity allows us to deal with the leading term of $\partial_z^{k-1}P$. It follows for $l \in \lbrace1,2,3 \rbrace $
\[ \underset{t \rightarrow 0}{\textnormal{lim}} \Vert I_l\Vert_{ \tilde{Y}^{k-1}} =0. \]
Eventually, we have proved than $F_1$  can be extended analytically on $C_{\varepsilon}$ and 
\[ \underset{t \rightarrow 0}{\textnormal{lim}} \Vert F_1\Vert_{ Y^{k-1}} =0. \]
Moreover, the interaction term $F_2$ is dealed with the same arguments than the regularity of the interaction term.
Our next task is to prove that
\[DG_j: \R\times V_r \times V_r \rightarrow \mathcal{L}(\R\times  X^{k+\log} \times X^{k+\log} , Y^{k-1})\]
is  well-defined and continuous.\\
For the first part, the non trivial point  is that $\forall (\Omega, f_1,f_2) \in \R\times V_r \times V_r $, $DG_j(\Omega, f_1,f_2) \in \mathcal{L}(\R\times  X^{k+\log} \times X^{k+\log} , Y^{k-1})$. The linearity is obvious.\\
As before, we  just give details about the continuity of the self-induced term $DO_j(\Omega,f_j)$ . To begin we rewrite
\[DO_j(\Omega,f_j)(h_j)(\omega)=\sum_{i=1}^4 \frac{\hat{I}_i(\omega)-\hat{I}_i(\frac{1}{\omega})}{2i}\]
with 
\begin{align*}
\hat{I}_1(\omega)&=\Omega \frac{1}{\omega} \left[ \Phi_j(\omega)h_j'(\frac{1}{\omega})+h_j(\omega)\Phi_j'(\frac{1}{\omega})\right],\\
\hat{I}_2(\omega)&=(-1)^j h_j'(\frac{1}{\omega})\frac{1}{\omega} \fint_{\T}  \frac{\tau\Phi_j'(\tau)-\omega \Phi_j'(\omega)}{\Delta\Phi_j(\tau ,\omega) }\frac{d\tau}{\tau}, \\
\hat{I}_3(\omega)&=(-1)^j \Phi_j'(\frac{1}{\omega})\frac{1}{\omega} \fint_{\T}  \frac{\tau h_j'(\tau)-\omega h_j'(\omega)}{\Delta\Phi_j(\tau ,\omega) }\frac{d\tau}{\tau}, 
\end{align*}
and
\[\hat{I}_4(\omega)=-(-1)^j \Phi_j'(\frac{1}{\omega})\frac{1}{\omega} \fint_{\T}  \frac{(\tau\Phi_j'(\tau)-\omega \Phi_j'(\omega))\textnormal{Re}\bigg(\Big(\overline{h_j(\tau)-h_j(\omega)}\Big)\Big(\Phi_j(\tau)-\Phi_j(\omega)\Big)\bigg)}{\big( \Delta\Phi_j(\tau,\omega)\big)^3} \frac{d\tau}{\tau}. \]
Using the Lemma $ \ref{anal}$ and an adaptation, one may find a constant C such that for $p \in \lbrace 1,2,3,4 \rbrace$ the following estimate is checked 
\[ \Vert \hat{I}_p \Vert_{\tilde{Y}^{k-1}} \leq C \Vert \Phi_j \Vert_{X^{k+\log}} \Vert h_j \Vert_{X^{k+\log}}. \]
 Consequently, $DG_j$ is well-defined.
The continuity of $DG_j$ is the final point of the proof. We just explain the  continuity of $DO_j(\Omega,\cdot)$.
Let be $f_j,\tilde{f_j}\in V_r \times V_r$ and $h_j\in X^{k+\log}$ with $\Vert h_j \Vert_{X^{k+\log}} =1$, we have for any $\omega \in \T$
\[ DO_j(\Omega,f_j)(h_j)(\omega)-DO_j(\Omega,\tilde{f_j})(h_j)(\omega)= \sum_{i=1}^9 \frac{\tilde{I}_i(\omega)-\tilde{I}_i(\frac{1}{\omega})}{2i}\]
with
\begin{align*}
\tilde{I}_1(\omega)&= \frac{\Omega}{\omega} \Big(\Phi_j(\omega)-\tilde{\Phi}_j(\omega) \Big) h_j'(\frac{1}{\omega})+ \frac{\Omega}{\omega} \Big(\Phi_j'(\frac{1}{\omega})-\tilde{\Phi}_j'(\frac{1}{\omega}) \Big)h_j(\omega),\\
\tilde{I}_2(\omega)& =\frac{(-1)^{j}}{\omega} h_j'(\frac{1}{\omega})\fint_{\T} \frac{\tau \Big(\Phi_j'(\tau)-\tilde{\Phi}_j'(\tau) \Big)-\omega \Big(\Phi_j'(\omega)-\tilde{\Phi}_j'(\omega) \Big)}{\Delta\Phi_j(\tau ,\omega) } \frac{d\tau}{\tau},\\
 \tilde{I}_3(\omega) &=\frac{(-1)^{j}}{\omega} h_j'(\frac{1}{\omega})\fint_{\T} \left[\tau\tilde{\Phi}_j'(\tau)-\omega \tilde{\Phi}_j'(\omega) \right] \left[\frac{1}{\Delta\Phi_j(\tau ,\omega)} -\frac{1}{\Delta\tilde{\Phi}_j(\tau ,\omega)}\right]\frac{d\tau}{\tau} ,\\
\tilde{I}_4(\omega)& =\frac{(-1)^{j}}{\omega} \Big(\Phi_j'(\frac{1}{\omega})-\tilde{\Phi}_j'(\frac{1}{\omega}) \Big)\fint_{\T} \frac{\tau h_j'(\tau)-\omega h_j'(\omega)}{\Delta\Phi_j(\tau ,\omega) } \frac{d\tau}{\tau}, \\
\tilde{I}_5(\omega) &=\frac{(-1)^{j}}{\omega} \tilde{\Phi}_j'(\frac{1}{\omega})\fint_{\T} \Big(\tau h_j'(\tau)-\omega h_j'(\omega) \Big) \left[\frac{1}{\Delta\Phi_j(\tau,\omega) } -\frac{1}{\Delta\tilde{\Phi}_j(\tau ,\omega) }\right]\frac{d\tau}{\tau},  
\end{align*}
\begin{align*}
\tilde{I}_6(\omega) &=\frac{(-1)^{j}}{\omega} \Big(\tilde{\Phi}_j'(\frac{1}{\omega})-\Phi_j'(\frac{1}{\omega}) \Big)\fint_{\T} \frac{\bigg( \tau \tilde{\Phi}_j'(\tau)-\omega \tilde{\Phi}_j'(\omega) \bigg)\textnormal{Re}\bigg( \Big(\overline{h_j(\omega)-h_j(\tau)} \Big) \Big(\tilde{\Phi}_j(\omega)-\tilde{\Phi}_j(\tau) \Big) \bigg)}{\big( \Delta\tilde{\Phi}_j(\tau ,\omega) \big)^3 }\frac{d\tau}{\tau}, \\
\tilde{I}_7(\omega)& =\frac{(-1)^{j}}{\omega} \Phi_j'(\frac{1}{\omega})\fint_{\T} \frac{\bigg( \tau \Big(\tilde{\Phi}_j'(\tau)-\Phi_j'(\tau) \Big)-\omega \Big(\tilde{\Phi}_j'(\omega)-\Phi_j'(\omega) \Big) \bigg) \textnormal{Re} \bigg( \Big(\overline{h_j(\omega)-h_j(\tau)} \Big) \Big(\tilde{\Phi}_j(\omega)-\tilde{\Phi}_j(\tau)\Big) \bigg)}{ \big(\Delta\tilde{\Phi}_j(\tau ,\omega) \big)^3 }\frac{d\tau}{\tau}, \\
\tilde{I}_8(\omega)& =\frac{(-1)^{j}}{\omega} \Phi_j'(\frac{1}{\omega})\fint_{\T} \frac{\Big(\tau \Phi_j'(\tau)-\omega \Phi_j'(\omega)\Big)\textnormal{Re} \bigg( \Big(\overline{h_j(\omega)-h_j(\tau)} \Big)\Big( \big(\tilde{\Phi}_j(\omega)-\Phi_j(\omega) \big) -\big(\tilde{\Phi}_j(\tau)-\Phi_j(\tau)\big)\Big)\bigg)}{\big( \Delta\tilde{\Phi}_j(\tau ,\omega) \big)^3 }\frac{d\tau}{\tau} 
\end{align*}
and
\begin{align*}
\tilde{I}_9(\omega) &=\frac{(-1)^{j}}{\omega} \Phi_j'(\frac{1}{\omega})\fint_{\T} \big(\tau\Phi_j'(\tau)-\omega\Phi_j'(\omega) \big) \textnormal{Re}\bigg( \Big( \overline{h_j(\omega)-h_j(\tau)} \Big) \Big(\Phi_j(\omega)-\Phi_j(\tau) \Big) \bigg)\times\\
&\left[ \frac{1}{ \big( \Delta\tilde{\Phi}_j(\tau ,\omega) \big)^3 }-\frac{1}{ \big( \Delta\Phi_j(\tau ,\omega) \big)^3 }\right] \frac{d\tau}{\tau} .
\end{align*}
 For $p\in \lbrace 1,2,4,6,7,8 \rbrace$, one may extend $\tilde{I}_p$ as before. The control of the $\tilde{Y}^{k-1}$ norm leads on the lemma $\ref{anal}$ and an adaptation, we can find a constant C such that 
\[ \Vert \tilde{I}_p \Vert_{\tilde{Y}^{k-1}} \leq C \Vert \Phi_j-\tilde{\Phi}_j \Vert_{X^{k+\log}}.\]
We give few details for the integral term of $\tilde{I}_3$. As
\begin{align*}
 \frac{1}{\Delta \Phi_j(\tau ,\omega)}-\frac{1}{\Delta\tilde{\Phi}_j(\tau ,\omega)}&=\frac{\bigg(\tilde{\Phi}_j(\tau)-\tilde{\Phi}_j(\omega)\bigg)\bigg( \Big(\tilde{\Phi}_j(\overline{\tau})-\Phi_j(\overline{\tau})\Big)- \Big(\tilde{\Phi}_j(\frac{1}{\omega})-\Phi_j(\frac{1}{\omega})\Big) \bigg)}{\big( \Delta \Phi_j(\tau ,\omega) \big) \big( \Delta\tilde{\Phi}_j(\tau ,\omega)\big)  \big(\Delta \Phi_j(\tau ,\omega)+\Delta\tilde{\Phi}_j(\tau ,\omega) \big)} \\
 &+\frac{\bigg(\Phi_j(\overline{\tau})-\Phi_j(\frac{1}{\omega})\bigg)   \bigg( \tilde{\Phi}_j(\tau)-\Phi_j(\tau)-\tilde{\Phi}_j(\omega)+\Phi_j(\omega)\bigg)}{\big(\Delta \Phi_j(\tau ,\omega)\big) \big( \Delta\tilde{\Phi}_j(\tau ,\omega)\big) \big(\Delta\Phi_j(\tau ,\omega)+\Delta\tilde{\Phi}_j(\tau ,\omega)\big)}.
\end{align*}
The integral can be split in two terms and we just give few details for one. We deal with the other in the same way. After a change of variabe, we shall extend and control the term
\[\tilde{\tilde{I}}_3(\omega) \triangleq   \omega \fint_{\T}\frac{ \bigg(\tau\tilde{\Phi}_j'(\tau \omega)- \tilde{\Phi}_j'(\omega)\bigg) \bigg( \tilde{\Phi}_j(\tau\omega)-\tilde{\Phi}_j(\omega) \bigg) \bigg( \Big(\tilde{\Phi}_j(\frac{\overline{\tau}}{\omega})-\Phi_j(\frac{\overline{\tau}}{\omega})\Big)-\Big(\tilde{\Phi}_j(\frac{1}{\omega})-\Phi_j(\frac{1}{\omega})  \Big) \bigg)}{\left(\Delta\Phi_j(\tau \omega,\omega)\right) \left(\Delta\tilde{\Phi}_j(\tau \omega,\omega)\right) \left(\Delta\Phi_j(\tau \omega,\omega)+\Delta\tilde{\Phi}_j(\tau \omega,\omega) \right)} \frac{d\tau}{\tau}.\]
As before, we can write
\[ \Delta\Phi_j(\tau \omega,\omega)=b^{j-1}\vert \tau -1 \vert\sqrt{g(\tau,\omega)g(\overline{\tau},\frac{1}{\omega})},\]
\[ \Delta\tilde{\Phi}_j(\tau \omega,\omega)=b^{j-1} \vert \tau -1 \vert \sqrt{\tilde{g}(\tau,\omega)\tilde{g}(\overline{\tau},\frac{1}{\omega})},\]
and
\[ \Delta\Phi_j(\tau \omega,\omega)+\Delta\tilde{\Phi}_j(\tau \omega,\omega) =b^{j-1}\vert \tau-1\vert \Big( \sqrt{g(\tau,\omega)g(\overline{\tau},\frac{1}{\omega})}+ \sqrt{\tilde{g}(\tau,\omega)\tilde{g}(\overline{\tau},\frac{1}{\omega})} \Big).\]
where we can extend $g$ and $\tilde{g}$ as usual,
\[\forall z \in \overline{\mathbb{\Delta_{\varepsilon}}}, \; g(\tau, z)=1+\frac{f_j( \tau z)-f_j(z)}{b^{j-1} \omega (\tau-1)} \text{ and }\tilde{g}(\tau,z)=1+\frac{\tilde{f}_j( \tau z)-\tilde{f}_j(z)}{b^{j-1} \omega (\tau-1)}.\]
Thus the holomorpic extension of $\tilde{\tilde{I}}_3$ on $C_{\varepsilon}$  is given by
\[\tilde{\tilde{I}}_3(z) =z \fint_{\T}\frac{ \Big( \tau \partial_z \tilde{\Phi}_j(\tau z) - \partial \tilde{\Phi}_j(z) \Big) \Big(\tilde{\Phi}_j(\tau z)-\tilde{\Phi}_j(z) \Big) \Big(\tilde{\Phi}_j(\frac{\overline{\tau}}{z})-\Phi_j(\frac{\overline{\tau}}{z})-\tilde{\Phi}_j(\frac{1}{z})+\Phi_j(\frac{1}{z}) \Big)}{b^{3(j-1)}\vert \tau-1 \vert^3  \sqrt{\tilde{g}(\tau,z)\tilde{g}(\overline{\tau},\frac{1}{z})}\sqrt{g(\tau,z)g(\overline{\tau},\frac{1}{z})} \Big( \sqrt{g(\tau,z)g(\overline{\tau},\frac{1}{z})}+ \sqrt{\tilde{g}(\tau,z)\tilde{g}(\overline{\tau},\frac{1}{z})}\Big)} \frac{d\tau}{\tau}.\]
Concerning the $\tilde{Y}^{k-1}$ norm, we just give some details for the $L^2$ norm of the leading term of $\partial_z^{k-1}\tilde{\tilde{I}}_3(\varepsilon \cdot)$. For $z \in \overline{\mathcal{C}_{\varepsilon}}$, one may write
\begin{align*}
\partial_z^{k-1} \tilde{\tilde{I}}_3(z)= \fint_{\T}\frac{ \Big( \partial_z^k \tilde{\Phi}_j(\tau z)-\partial_z^k \tilde{\Phi}_j(z)\Big) \Big(\tilde{\Phi}_j( \tau z)-\tilde{\Phi}_j(z) \Big) \Big(\tilde{\Phi}_j(\frac{\overline{\tau}}{z})-\Phi_j(\frac{\overline{\tau}}{z})-\tilde{\Phi}_j(\frac{1}{z})+\Phi_j(\frac{1}{z}) \Big)}{\vert \tau-1 \vert^3  \sqrt{\tilde{g}(\tau,z)\tilde{g}(\overline{\tau},\frac{1}{z})} \sqrt{g(\tau,z)g(\overline{\tau},\frac{1}{z})}\Big( \sqrt{g(\tau,z)g(\overline{\tau},\frac{1}{z})}+ \sqrt{\tilde{g}(\tau,z) \tilde{g}(\overline{\tau},\frac{1}{z})}
\Big)} \frac{d\tau}{\tau} +l.o.t
\end{align*}
The control of the $L^2$ norm of the inner restriction $(\omega \mapsto \partial_z^{k-1} \tilde{\tilde{I}}_3(\varepsilon \omega))$ must ensure the continuity. First, one may obtain the following identity for $z \in \varepsilon \T \cup \varepsilon^{-1} \T$:
\[\frac{\Big(\tilde{\Phi}_j(\tau  z)-\tilde{\Phi}_j(z)\Big) \Big(\tilde{\Phi}_j(\frac{\overline{\tau}}{z})-\Phi_j(\frac{\overline{\tau}}{z})-\tilde{\Phi}_j(\frac{1}{z})+\Phi_j(\frac{1}{z}) \Big)}{\vert \tau-1 \vert^3  \sqrt{\tilde{g}(\tau,z)\tilde{g}(\overline{\tau},\frac{1}{z})}\sqrt{g(\tau,z)g(\overline{\tau},\frac{1}{z})}\Big( \sqrt{g(\tau,z)g(\overline{\tau},\frac{1}{z})}+ \sqrt{\tilde{g}(\tau,z)\tilde{g}(\overline{\tau},\frac{1}{z})} \Big)}=\frac{K(z)}{\vert \tau-1 \vert}+\frac{(\tau-1)}{\vert \tau-1 \vert }H(\tau,z)\]
with the  estimations
\begin{equation}\label{ker}
 \Vert K \Vert_{L^{\infty}(\varepsilon^{\pm} \T)} \leq C \Vert \Phi_j-\tilde{\Phi}_j \Vert_{X^{k+\log}} \text{, }\Vert H \Vert_{L^{ \infty}(\T \times \varepsilon^{\pm} \T)} \leq C \Vert \Phi_j-\tilde{\Phi}_j \Vert_{X^{k+\log}}.
 \end{equation}
The proof leads on  the lemma $\ref{exp}$ and an adaptation.
 Hence, we can estimate for  $p \in \lbrace 3,5 \rbrace$:
 \[\Vert \tilde{I}_p \Vert_{\tilde{Y}^{k-1}} \leq C \Vert \Phi_j-\tilde{\Phi}_j \Vert_{X^{k+\log}}\]
For $\tilde{I}_9$ we just need to notice this decomposition
\begin{align*}
&\frac{1}{\big( \Delta \Phi_j(\tau ,\omega) \big)^3}-\frac{1}{\big(  \Delta\Phi_j(\tau ,\omega) \big)^3} =\frac{1}{\big( \Delta\Phi_j(\tau ,\omega) \big)^2}\left[ \frac{1}{ \Delta\Phi_j(\tau ,\omega) }-\frac{1}{ \Delta\tilde{\Phi}_j(\tau ,\omega) } \right]\\
&+\frac{1}{\big( \Delta\Phi_j(\tau ,\omega) \big) \big( \Delta \tilde{\Phi}_j(\tau ,\omega) \big) }\left[ \frac{1}{\Delta\Phi_j(\tau ,\omega) }-\frac{1}{ \Delta \tilde{\Phi}_j(\tau ,\omega) } \right]\\
& +\frac{1}{\big( \Delta \tilde{\Phi}_j(\tau ,\omega) \big)^2 } \left[ \frac{1}{\Delta\Phi_j(\tau ,\omega) }-\frac{1}{\Delta \tilde{\Phi}_j(\tau ,\omega) } \right] .
\end{align*}
With this writting and the same arguments than for $\tilde{I}_3$ , we get
 \[\Vert \tilde{I}_9 \Vert_{\tilde{Y}^{k-1}} \leq C \Vert \Phi_j-\tilde{\Phi}_j \Vert_{X^{k+\log}}.\]
Finally, $DG_j$ is continuous.
\end{proof}
\section{Study of the linearized operator}\label{Linea12}
The main task of this section is to perform a spectral study of the linearized operator of the functional $G$ introduced in \eqref{V-state} at the annular solution $(\hbox{Id},b\hbox{Id})$. The first subsection is dedicated to an explicit computation of this operator and to get a more user-friendly expression through some basic identities on hypergeometric functions.  In the second part, we want to find the values of $\Omega$ leading to a one-dimensional kernel for the linearized operator.  We show that for each frequency mode this study reduces to  a second degree equation on the variable $\Omega$. The dimension of the kernel is achieved through the strict monotonicity  of the eigenvalues with respect to the frequency.  Lastly, we check the  full assumptions of the Crandall-Rabinowitz's  theorem especially the transversality condition which holds only when the eigenvalues are simple.
\subsection{Linearized operator}\label{Linearized operator}
The primary purpose of this section is to compute the linearized operator of $G$ at the trivial solution $(\hbox{Id}, b\hbox{Id})$ and to reach a more simplified and compact expression.
Since $G=(G_1,G_2)$ then for given $(h_1,h_2) \in X^{k+\log} \times X^{k+\log} $, we have
\[ DG(\Omega,0,0)(h_1,h_2)=\left( \begin{array}{c}
D_{f_1}G_1(\Omega,0,0)h_1+D_{f_2}G_1(\Omega,0,0)h_2 \\ 
D_{f_1}G_2(\Omega,0,0)h_1+D_{f_2}G_2(\Omega,0,0)h_2
\end{array} \right).\]
Replacing in $(\ref{diff})$, $(\ref{lj})$ and $(\ref{Nj})$ $\Phi_1$ by $\hbox{Id}$ and $\Phi_2$ by $b \hbox{Id}$  yields
\begin{equation*}\label{DG1}
DG_1(\Omega,0,0)(h_1,h_2)(\omega)=\Omega\, \mathcal{L}_0(h_1)(\omega)+\mathcal{L}_1(h_1)(\omega)+\mathcal{L}_2(h_1,h_2)(\omega),
\end{equation*}
\begin{equation*}\label{DG2}
DG_2(\Omega,0,0)(h_1,h_2)(\omega)=\Omega \,b \mathcal{L}_0(h_2)(\omega)+\mathcal{L}_1(h_2)(\omega)+\mathcal{L}_3(h_1,h_2)(\omega)
\end{equation*}
with
 \[ \mathcal{L}_0(h_j)(\omega)=\textnormal{Im}\left\lbrace h_j'(\overline{\omega})+h_j(\omega)\overline{\omega}\right\rbrace,\]
\begin{align*}  
  \mathcal{L}_1(h_j)(\omega)&=\textnormal{Im}\left\lbrace (-1)^j\overline{h_j'(\omega)}\overline{\omega} \fint_{\T}  \frac{\tau-\omega }{\vert \omega-\tau\vert }\frac{d\tau}{\tau}-(-1)^j  \overline{\omega}\fint_{\T}  \frac{(\tau-\omega)\textnormal{Re}\Big((\overline{h_j(\tau)-h_j(\omega)})(\tau-\omega)\Big)}{\vert \tau-\omega \vert^3} \frac{d\tau}{\tau}\right\rbrace\\
&+\textnormal{Im}\left\lbrace(-1)^j\overline{\omega}\fint_{\T}  \frac{\tau h_j'(\tau)-\omega h_j'(\omega)}{\vert \omega-\tau\vert }\frac{d\tau}{\tau} \right\rbrace,
\end{align*}
\begin{eqnarray*}
\mathcal{L}_2(h_1,h_2)(\omega)&=& \textnormal{Im}\left\lbrace\overline{h_1'(\omega)}\overline{\omega}\fint_{\T}  \frac{b\tau-\omega }{\vert b\tau-\omega\vert }\frac{d\tau}{\tau} \right\rbrace-\textnormal{Im} \left\lbrace \overline{\omega}\fint_{\T}  \frac{ \omega h_1'(\omega)-\tau h_2'(\tau)}{\vert \omega-b\tau\vert }\frac{d\tau}{\tau} \right\rbrace\\
&+&\textnormal{Im} \left\lbrace-\overline{\omega}\fint_{\T}  \frac{(b\tau-\omega )\textnormal{Re}\Big((h_1(\omega)-h_2(\tau))\overline{(\omega-b\tau)}\Big)}{\vert b\tau-\omega \vert^3} \frac{d\tau}{\tau} 
\right\rbrace
\end{eqnarray*}
and
\begin{align*}
\mathcal{L}_3(h_1,h_2)(\omega)&= \textnormal{Im}\left(-\overline{\omega}h_2'(\overline{\omega})\fint_{\T}  \frac{\tau-b\omega }{\vert \tau-b\omega\vert }\frac{d\tau}{\tau}+\overline{\omega}b\fint_{\T}  \frac{\omega h_2'(\omega)-\tau h_1'(\tau)}{\vert b\omega-\tau\vert }\frac{d\tau}{\tau}\right) \\
&+\textnormal{Im} \left(b\,\overline{\omega}\fint_{\T}  \frac{(\tau-b\omega )\textnormal{Re}\Big((h_1(\tau)-h_2(\omega))(\overline{\tau-b\omega})\big)}{\vert \tau-b\omega \vert^3} \frac{d\tau}{\tau} \right).
\end{align*}
We shall now compute the Fourier series of the mapping $\omega \mapsto DG(\Omega,0,0)(h_1,h_2)(\omega)$ with
\[h_1(\omega) =\sum_{n=1}^{+ \infty} a_n\overline{\omega}^n \text{ and } h_2(\omega) =\sum_{n=1}^{+ \infty} c_n\overline{\omega}^n \text{, } \omega \in \T \]
where $a_n$ and $c_n$ are real  for all the values $ n\in \N^{\star}$. This is summarized in the following proposition.
\begin{proposition}\label{Matrice}
Let $b\in (0,1)$, $n\in \N^{\star}$ and define 
\[ \Lambda_n(b) \triangleq \frac{1}{b} \int_0^{+ \infty} J_n(bt)J_n(t) dt,\quad S_n \triangleq \frac{2}{\pi}\sum_{k=1}^{n-1}\frac{1}{2k+1}\]
with  $J_n$ refers to the Bessel function of the first kind. Then, we have
 \[DG(\Omega,0,0)(h_1,h_2)(\omega)=\frac{i}{2}\sum_{n\geq 1}(n+1)M_{n+1} \left( \begin{array}{c}
a_n \\ 
c_n
\end{array} \right) \left(\omega^{n+1}-\overline{\omega}^{n+1}\right) \text{, } \forall \omega \in \T \]
where the matrix $M_n$ is given for $n\geq 2$ by:
\[M_n=\left(\begin{array}{cc}
\Omega-S_n+b^2\Lambda_1(b) & -b^2\Lambda_n(b) \\ 
b\Lambda_n(b) & b\Omega+S_n-b\Lambda_1(b)
\end{array}\right). \]
%The determinant of this matrix is given by:
%\begin{equation}\label{determinant}
%\textnormal{det} (M_n)=(\Omega-S_n+b^2\Lambda_1(b))(b\Omega+S_n-b\Lambda_1(b))+b^3 \Lambda_n^2(b)
%\end{equation}
\end{proposition}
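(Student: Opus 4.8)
The computation starts from the explicit forms of $\mathcal L_0,\mathcal L_1,\mathcal L_2,\mathcal L_3$ displayed just above the statement, into which I would insert $h_1(\omega)=\sum_{n\ge1}a_n\overline{\omega}^{\,n}$, $h_2(\omega)=\sum_{n\ge1}c_n\overline{\omega}^{\,n}$, compute the Fourier series of each piece, take imaginary parts, and collect the coefficient of $\tfrac i2(n+1)(\omega^{n+1}-\overline{\omega}^{\,n+1})$. The term $\mathcal L_0$ is immediate: since $h_j'(\overline\omega)=-\sum n a_n\omega^{n+1}$ and $\overline\omega\,h_j(\omega)=\sum a_n\overline{\omega}^{\,n+1}$, one gets $\mathcal L_0(h_j)(\omega)=\tfrac i2\sum_{n\ge1}(n+1)a_n(\omega^{n+1}-\overline{\omega}^{\,n+1})$, which produces the diagonal $\Omega$-entries of $M_{n+1}$.

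For the self-induced part $\mathcal L_1(h_j)$ I would, in each singular integral, perform the change of variables $\tau=\omega\xi$, reducing everything to averages of the shape $\fint_\T\frac{(\cdot)(\xi)}{|\xi-1|}\frac{d\xi}{\xi}$ or $\fint_\T\frac{(\cdot)(\xi)}{|\xi-1|^{3}}\frac{d\xi}{\xi}$. The elementary relation $\frac{(\xi-1)^2}{|\xi-1|^2}=-\xi$ on $\T$ turns the cubic-kernel $\mathrm{Re}$–term into a linear-kernel one, after which the basic identity (the one used in Lemma~\ref{caracterisation}), $\fint_\T\frac{\overline{\xi}^{\,m}-1}{|\xi-1|}\frac{d\xi}{\xi}=-\tfrac2\pi-S_m$, evaluates all the pieces; the divergent $-\tfrac2\pi$ contributions cancel among the three summands of $\mathcal L_1$, leaving the $\mp S_{n+1}$ diagonal contributions. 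For the interaction parts $\mathcal L_2,\mathcal L_3$, whose kernels are $\frac1{|b\tau-\omega|}$ and $\frac1{|\tau-b\omega|}$, I would use the Bessel–Fourier expansion of the $(-\Delta)^{-1/2}$ kernel: from $\int_0^\infty J_0(t|x-y|)\,dt=\frac1{|x-y|}$ and Graf's addition formula one obtains, for $|x|=r_1,\ |y|=r_2$, that $\frac1{|x-y|}=\sum_{n\in\Z}e^{in(\arg x-\arg y)}\int_0^\infty J_n(r_1t)J_n(r_2t)\,dt$, whose Fourier coefficients are precisely $b\Lambda_n(b)$. Inserting the series for $h_1,h_2$, integrating term by term, taking imaginary parts, and then simplifying the arising hypergeometric expressions $\Lambda_0(b),\Lambda_1(b),\dots$ via the Sonine–Schafheitlin formula together with the contiguous relations \eqref{eq2}–\eqref{eq4} collapses the output into the off-diagonal entries $\mp b^{\bullet}\Lambda_{n+1}(b)$ and the diagonal entries $\pm b^{\bullet}\Lambda_1(b)$. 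Assembling all contributions and factoring out $\tfrac i2(n+1)(\omega^{n+1}-\overline{\omega}^{\,n+1})$ gives $M_{n+1}$ for $n+1\ge2$.

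The main obstacle is the bookkeeping of the real-part terms. Writing $\mathrm{Re}(w)=\tfrac12(w+\overline w)$, the factor $\overline{(h_j(\tau)-h_j(\omega))}\,(\Phi_j(\tau)-\Phi_j(\omega))$ and its interaction analogue spread over several Fourier modes at once, and one must verify that every "wrong-mode" term, together with the formally divergent $\tfrac2\pi$ and $\Lambda_0(b)$ pieces, cancels — so that only the clean rank-one-per-mode structure $M_{n+1}\binom{a_n}{c_n}(\omega^{n+1}-\overline{\omega}^{\,n+1})$ survives. A secondary difficulty is the handling of the special functions: justifying the termwise integration against the non-integrable kernels (legitimate only once the telescoping differences $\overline{\xi}^{\,m}-1$, resp. $\tau\Phi_i'(\tau)-\omega\Phi_j'(\omega)$, render the integrand integrable), and recognizing, through the hypergeometric identities of Section~3, that the combinations of Bessel integrals which appear are exactly $S_n$, $\Lambda_1(b)$ and $\Lambda_n(b)$.
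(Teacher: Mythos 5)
Your treatment of $\mathcal L_0$ and of the self-induced term $\mathcal L_1$ is sound and essentially the paper's: the relation $(\tau-\omega)^2=-\tau\omega\,\vert\tau-\omega\vert^2$ on $\T\times\T$ is exactly what converts the cubic-kernel piece into the linear-kernel identity \eqref{c1} (this is the content of \eqref{c2}), and the constants you call ``divergent $-\tfrac2\pi$ pieces'' are in fact finite and do not cancel outright — they recombine with the other coefficients to produce $(n+1)S_{n+1}$ — but the computation goes through. Likewise, obtaining the Fourier coefficients of $\vert b\tau-\omega\vert^{-1}$ as $b\Lambda_n(b)$ via $\int_0^\infty J_0(t\vert x-y\vert)\,dt=\vert x-y\vert^{-1}$ and Graf's addition theorem is a legitimate (and arguably more direct) alternative to the paper's identities \eqref{c3}--\eqref{c4}, which go through hypergeometric functions and only at the end are recognized as $\Lambda_n(b)$.

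The genuine gap is in the interaction terms $\mathcal L_2,\mathcal L_3$. Their kernels are not only $\vert b\tau-\omega\vert^{-1}$: the real-part terms produce, after writing $\mathrm{Re}(w)=\tfrac12(w+\overline w)$, integrals against $\dfrac{(b\tau-\omega)^2}{\vert b\tau-\omega\vert^{3}}$, and here the trick used for $\mathcal L_1$ fails because only one of the two points lies on $\T$ (one gets $\frac{(b\tau-\omega)^2}{\vert b\tau-\omega\vert^2}=\frac{\tau\omega(b\tau-\omega)}{b\omega-\tau}$, a nontrivial rational factor, not $-\tau\omega$). Your proposed tool does not extend to these cubic kernels either: the Bessel--Fourier representation of $\vert x-y\vert^{-3}$ would require $\int_0^{+\infty}tJ_n(bt)J_n(t)\,dt$, which diverges, and the Sonine--Schafheitlin formula does not apply there. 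This is precisely where the bulk of the paper's proof lives: the closed-form evaluations \eqref{c5}--\eqref{c8} of the cubic-kernel integrals in terms of $F\left(\tfrac32,\cdot,\cdot;b^2\right)$, followed by the cascade of contiguous relations \eqref{eq2}, \eqref{eq11}, \eqref{eq12}, \eqref{eq4}, are what make the many hypergeometric contributions collapse to the single entries $b^2\Lambda_1(b)$, $-b^2\Lambda_{n+1}(b)$, $b\Lambda_{n+1}(b)$, $-b\Lambda_1(b)$ per mode. In your proposal this collapse is asserted (``collapses the output into the off-diagonal entries'') rather than derived, and no substitute identities are given; if you expand the rational factor $\frac{1}{b\omega-\tau}$ in a geometric series instead, you obtain double series in $\Lambda_m(b)$ that do not visibly resum to a single $\Lambda_{n+1}(b)$. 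So the argument as written does not reach the stated matrix $M_{n+1}$ without supplying the cubic-kernel evaluations and the hypergeometric reduction that the paper carries out.
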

\begin{proof}
 We begin with the  easier  term  $ \mathcal{L}_0(h_j)(\omega)$. Thus by straightforward computations we obtain
\[\mathcal{L}_0(h_1)(\omega)=\frac{i}{2}\sum_{n=1}^{+ \infty}(n+1) a_{n}(\omega^{n+1}-\overline{\omega}^{n+1}).\]
and
\[\mathcal{L}_0(h_2)(\omega)=\frac{i}{2}\sum_{n=1}^{+ \infty}(n+1) c_{n}(\omega^{n+1}-\overline{\omega}^{n+1}).\]
The computation of  $\mathcal{L}_1(h_1)(\omega)$  lies on the following identities whose proofs can be found in  \cite{15}.
Let  $n\in \N^\star$ and  $\omega \in \T$ then
\begin{equation}\label{c1}
 \fint_{\T} \frac{\tau^n-\omega^n}{\vert \omega- \tau\vert }\frac{d\tau}{\tau}=-\frac{2\omega^n}{\pi}\sum_{k=0}^{n-1}\frac{1}{2k+1}
 \end{equation}
and
 \begin{equation}\label{c2}
  \fint_{\T} \frac{(\tau-\omega)^2(\tau^n-\omega^n)}{\vert \omega- \tau\vert^3 }\frac{d\tau}{\tau}=\frac{2\omega^{n+2}}{\pi}\sum_{k=1}^{n}\frac{1}{2k+1}.
  \end{equation}
Performing straightforward computations we obtain the result
\begin{align*}
\mathcal{L}_1(h_1)(\omega)&=\textnormal{Im} \left\lbrace \sum_{n=1}^{+ \infty} na_{n}\omega^{n}\fint_{\T}\frac{\tau-\omega}{\vert \tau - \omega \vert}\frac{d\tau}{\tau}+ \frac{1}{2\omega}\sum_{n=1}^{+ \infty} a_{n} \fint_{\T} \frac{(\tau^{n}-\omega^{n})(\tau - \omega)^2}{\vert \tau - \omega \vert^3} \frac{d\tau}{\tau} \right\rbrace\\
&+\textnormal{Im} \left\lbrace \overline{\omega}\sum_{n=1}^{+ \infty} n a_{n}\fint_{\T} \frac{\overline{\tau}^{n}-\overline{\omega}^{n}}{\vert \tau - \omega \vert}\frac{d\tau}{\tau}+\frac{1}{2\omega}\sum_{n=1}^{+ \infty} a_{n}\fint_{\T}\frac{\overline{\tau}^{n}-\overline{\omega}^{n}}{\vert \tau - \omega \vert}\frac{d\tau}{\tau} \right\rbrace .
\end{align*}
Noticing the following equality
\[ \forall \tau \in \T, \forall \omega \in \T,  \fint_{\T} \frac{\overline{\tau}^n-\overline{\omega}^n}{\vert \omega- \tau\vert }\frac{d\tau}{\tau}=\overline{ \fint_{\T} \frac{\tau^n-\omega^n}{\vert \omega- \tau\vert }\frac{d\tau}{\tau}},\]
we obtain thanks to $(\ref{c1})$ and $(\ref{c2})$ the following identity
\[\mathcal{L}_1(h_1)(\omega)=-\sum_{n=1}^{+ \infty} \alpha_{n}a_n \textnormal{Im}\left\lbrace \overline{\omega}^{n+1}\right\rbrace +\sum_{n=1}^{+ \infty} \beta_{n}a_n\textnormal{Im}\left\lbrace\omega^{n+1} \right\rbrace\]
where
\[\alpha_{n}\triangleq \frac{2n+1}{\pi}\sum_{k=0}^{n-1}\frac{1}{2k+1}\]
and
\[\beta_{n}\triangleq\ -\frac{2n}{\pi}+\frac{1}{\pi}\sum_{k=1}^{n}\frac{1}{2k+1}.\]
As
\[ \alpha_{n}+\beta_{n}=\frac{2(n+1)}{\pi}\sum_{k=1}^{n}\frac{1}{2k+1}.\]
Finally we get
\[\mathcal{L}_1(h_1)(\omega)=-\frac{i}{2}\sum_{n=1}^{+ \infty} a_{n}(n+1)S_{n+1}(\omega^{n+1} -\overline{\omega}^{n+1}).\]
In the same way we obtain
\[\mathcal{L}_1(h_2)(\omega)=\frac{i}{2}\sum_{n=1}^{+ \infty} c_{n}(n+1)S_{n+1}(\omega^{n+1} -\overline{\omega}^{n+1}).\]
To compute $\mathcal{L}_2(h_1,h_2)(\omega)$ we begin to rewrite
\begin{align*}
-\overline{\omega}\fint_{\T}  \frac{(b\tau-\omega )\textnormal{Re}\Big(h_1(\omega)-h_2(\tau))\overline{(\omega-b\tau)}\Big)}{\vert b\tau-\omega \vert^3} \frac{d\tau}{\tau} &=\frac{\overline{\omega}}{2}\fint_{\T}  \frac{h_1(\omega)-h_2(\tau)}{\vert b\tau-\omega \vert} \frac{d\tau}{\tau} \\
&+\frac{\overline{\omega}}{2}\fint_{\T}  \frac{(b\tau-\omega )^2 \overline{(h_1(\omega)-h_2(\tau))}}{\vert b\tau-\omega \vert^3} \frac{d\tau}{\tau} .
\end{align*} 
Replacing $h_j$ and $h_j'$ by their expressions, we obtain the following identity
\begin{eqnarray*}
\mathcal{L}_2(h_1,h_2)(\omega)&=&\textnormal{Im}\left\lbrace\ -\sum_{n=1}^{+ \infty} na_{n}\fint_{\T}  \frac{b\tau-\omega }{\vert b\tau-\omega\vert }\frac{d\tau}{\tau} +\sum_{n=1}^{+ \infty} n \overline{\omega} \fint_{\T}  \frac{ a_n \overline{\omega}^n- c_n \overline{\tau}^n}{\vert \omega-b\tau\vert }\frac{d\tau}{\tau} +\frac{1}{2}\sum_{n=1}^{+ \infty}\overline{\omega}\fint_{\T}  \frac{a_n \overline{\omega}^n - \overline{\tau}^n}{\vert b\tau-\omega \vert} \frac{d\tau}{\tau} \right\rbrace\\
&+&\textnormal{Im} \left\lbrace \frac{b}{2}\sum_{n=1}^{+ \infty} \overline{\omega} \frac{(b \omega- \tau)(a_n \omega^n - c_n \tau^n)}{\vert b\tau-\omega \vert^3} d\tau -\frac{1}{2}\sum_{n=1}^{+ \infty} \frac{(b - \omega \overline{\tau})(a_n \omega^n - c_n \tau^n)}{\vert b\tau-\omega \vert^3} d \tau \right\rbrace.
\end{eqnarray*}
To compute these terms, we will use the identities proved in  \cite{16}:
Let $b\in (0,1)$ and $n\in \N$, then for any $\omega \in \T$ we have
\begin{equation}\label{c3}
 \fint_{\T} \frac{\tau^{n-1}}{\vert b\tau - \omega \vert} d \tau=\omega^n b^n \frac{(\frac{1}{2})_n}{n!}F\left(\frac{1}{2},n+\frac{1}{2},n+1;b^2\right),
 \end{equation}
\begin{eqnarray}\label{c4}
\nonumber \fint_{\T} \frac{\overline{\tau}^{n+1}}{\vert b\tau - \omega \vert}  d\tau  &=&\overline{\fint_{\T} \frac{\tau^{n-1}}{\vert b\tau - \omega \vert} d\tau} \\
 &=&\overline{\omega}^n b^n \frac{(\frac{1}{2})_n}{n!}F\left(\frac{1}{2},n+\frac{1}{2},n+1;b^2\right),
\end{eqnarray} 
\begin{equation}\label{c5}
\fint_{\T}\frac{\overline{\tau}^{n+1}}{\vert \omega-b\tau \vert^3} d \tau=\omega^n b^n \frac{\left( \frac{3}{2}\right)_n}{n!}F \left( \frac{3}{2},n+ \frac{3}{2}, n+1;b^2\right),
\end{equation}
\begin{equation}\label{c6}
\fint_{\T}\frac{\tau^{n-1}}{\vert \omega-b\tau \vert^3} d \tau=\omega^n b^n \frac{\left( \frac{3}{2}\right)_n}{n!}F \left( \frac{3}{2},n+ \frac{3}{2}, n+1;b^2\right),
\end{equation}
\begin{equation}\label{c7}
\fint_{\T} \frac{(b \tau-\omega)(a \omega^n- c \tau^n)}{\vert \omega-b\tau \vert^3} d \tau= -\omega^{n+2} b \Bigg[ \frac{3}{2}a F\left( \frac{1}{2},\frac{5}{2},2; b^2 \right)-c b^n \frac{\left( \frac{3}{2}\right)_{n+1}}{(n+1)!} F \left( \frac{1}{2}, n+ \frac{5}{2},n+2; b^2 \right) \Bigg],
\end{equation}
and
\begin{equation}\label{c8}
\fint_{\T} \frac{(b \omega-\tau)(c \omega^n- a \tau^n)}{\vert \omega-b\tau \vert^3} d \tau= -\omega^{n+2} b^2 \Bigg[ \frac{3}{8}c F\left( \frac{3}{2},\frac{5}{2},3; b^2 \right)-a b^n \frac{\left( \frac{1}{2}\right)_{n+2}}{(n+2)!} F \left( \frac{3}{2}, n+ \frac{5}{2},n+3; b^2 \right) \Bigg].
\end{equation}
%Especially:
%\begin{align*}
%\fint_{\T} \frac{\tau^{-1}}{\vert b\tau - \omega \vert} d\tau&=F\left(\frac{1}{2},\frac{1}{2},1;b^2\right)\\
% \fint_{\T} \frac{1}{\vert b\tau - \omega \vert} d\tau & =\omega b \frac{1}{2}F\left(\frac{1}{2},1+\frac{1}{2},1+1;b^2\right)\\
%\fint_{\T} \frac{\tau}{\vert b\tau - \omega \vert} d\tau &=\omega^2 b^2 \frac{(\frac{1}{2})_2}{2!}F\left(\frac{1}{2},2+\frac{1}{2},2+1;b^2\right)
%\end{align*}
 We shall split the computation in many parts. By using $(\ref{c3})$ and $(\ref{c4})$ we find
\[-\sum_{n=1}^{+ \infty} na_{n}\fint_{\T}  \frac{b\tau-\omega }{\vert b\tau-\omega\vert }\frac{d\tau}{\tau}=\sum_{n=1}^{+ \infty} n a_{n} \left[ F\left(\frac{1}{2},\frac{1}{2},1;b^2\right)-\frac{b^2}{2} F\left(\frac{1}{2},\frac{3}{2},2;b^2\right) \right]\omega^{n+1}.\]
Moreover
\begin{align*}
\sum_{n=1}^{+ \infty} \left(n+\frac{1}{2}\right) \overline{\omega} \fint_{\T}  \frac{ a_n \overline{\omega}^n- c_n \overline{\tau}^n}{\vert \omega-b\tau\vert }\frac{d\tau}{\tau}=\sum_{n=1}^{+ \infty} \left(n+\frac{1}{2}\right) \left[a_n F\left(\frac{1}{2},\frac{1}{2},1;b^2\right)-c_n b^n \frac{(\frac{1}{2})_n}{n!}F\left(\frac{1}{2},n+\frac{1}{2},n+1;b^2\right) \right] \overline{\omega}^{n+1}.
\end{align*}
Now, using $(\ref{c7})$ we obtain
\[\frac{b}{2}\sum_{n=1}^{+ \infty} \overline{\omega} \fint_{\T} \frac{(b \omega- \tau)(a_n \omega^n - c_n \tau^n)}{\vert b\tau-\omega \vert^3} d\tau =\frac{b^2}{2} \Bigg[ c_n b^n \frac{\left( \frac{3}{2}\right)_{n+1}}{(n+1)!} F \left( \frac{1}{2}, n+ \frac{5}{2},n+2; b^2 \right)- \frac{3}{2}a_n F\left( \frac{1}{2},\frac{5}{2},2; b^2 \right)\Bigg] \omega^{n+1} .\]
For the last term of $\mathcal{L}_2(h_1,h_2)$ we use $(\ref{c5})$ and $(\ref{c6})$
\begin{align*}
& -\frac{1}{2}\sum_{n=1}^{+ \infty} \fint_{\T} \frac{(b - \omega \overline{\tau})(a_n \omega^n - c_n \tau^n)}{\vert b\tau-\omega \vert^3} d \tau= -\frac{1}{2}\sum_{n=1}^{+ \infty}  a_n  \Bigg[ \frac{3}{2} b^2 F\left( \frac{3}{2},\frac{5}{2},2; b^2 \right)-F\left( \frac{3}{2},\frac{3}{2},1; b^2 \right)  \Bigg] \omega^{n+1}\\
 & +\frac{1}{2}\sum_{n=1}^{+ \infty} c_n \frac{b^n}{n!} \left( \frac{3}{2}\right)_{n} \Bigg[ \frac{\left( n+ \frac{3}{2}\right)}{(n+1)} b^2 F\left( \frac{3}{2},n+\frac{5}{2},n+2; b^2 \right)-F\left( \frac{3}{2},n+\frac{3}{2},n+1; b^2 \right)\Bigg] \omega^{n+1} .
 \end{align*}
Now we shall apply  $(\ref{eq2})$ with $a= \frac{1}{2}$, $b=\tilde{n}+ \frac{3}{2}$, $c=\tilde{n}+1$ and $z= b^2$ where $\tilde{n} \in \lbrace 0,n \rbrace$
\[ -\frac{1}{2}\sum_{n=1}^{+ \infty} \fint_{\T} \frac{(b - \omega \overline{\tau})(a_n \omega^n - c_n \tau^n)}{\vert b\tau-\omega \vert^3} d \tau =\frac{1}{2}\sum_{n=1}^{+ \infty} \left[ a_n   F \left( \frac{1}{2},  \frac{3}{2},1; b^2 \right)-c_n\frac{b^n}{n!} \left( \frac{3}{2}\right)_{n} F \left( \frac{1}{2},n+  \frac{3}{2},1; b^2 \right)\right] \omega^{n+1}.\]
Finally we get
\[\mathcal{L}_2(h_1,h_2)(\omega)=\frac{i}{2}\sum_{n=1}^{+ \infty} \left[ a_{n}(\tilde{\gamma_{n}}-\gamma_{n})+(\tilde{\beta_{n}}-\beta_{n})c_{n}\right](\omega^{n+1}-\overline{\omega}^{n+1} )\]
with
\begin{align*}
\gamma_{n}=&n \left[F\left(\frac{1}{2},\frac{1}{2},1;b^2 \right)-\frac{b^2}{2}F \left(\frac{1}{2},\frac{3}{2},2;b^2 \right) \right]-\frac{3}{4}b^2F \left(\frac{1}{2},\frac{5}{2},2;b^2 \right)+\frac{1}{2} F\left(\frac{1}{2},\frac{3}{2},1;b^2\right),\\
\tilde{\gamma_{n}}=& (n+\frac{1}{2})F\left(\frac{1}{2},\frac{1}{2},1;b^2\right),\\
\beta_{n}=&\frac{b^{n+2}}{2} \frac{(\frac{3}{2})_{n+1}}{(n+1)!}F\left(\frac{1}{2},n+\frac{5}{2},n+2;b^2\right)-\frac{b^{n}}{2} \frac{(\frac{3}{2})_{n}}{n!}F\left(\frac{1}{2},n+\frac{3}{2},n+1;b^2\right)  \\
\intertext{ and }
\tilde{\beta_{n}}=&-(n+\frac{1}{2})b^{n} \frac{(\frac{1}{2})_{n}}{n!}F\left(\frac{1}{2},n+\frac{1}{2},n+1;b^2\right).
\end{align*}
Now,  we want to  simplify the expression of $\mathcal{L}_2 (h_1,h_2)$ through the use of the identities $(\ref{eq2})$-$ (\ref{eq4} )$. We begin with $(\ref{eq11} )$ with $a =\frac{1}{2}$, $b=\tilde{n}+\frac{1}{2}$, $c= \tilde{n}+1$ and $z=b^2$ where $\tilde{n} \in \lbrace 0,n \rbrace$ which implies
\[\gamma_{n}-\tilde{\gamma_n}=-n\frac{b^2}{2}F\left(\frac{1}{2},\frac{3}{2},2;b^2\right)-\frac{3}{4}b^2F\left(\frac{1}{2},\frac{5}{2},2;b^2\right)+\frac{b^2}{4}F\left(\frac{3}{2},\frac{3}{2},2;b^2\right)\]
and 
\[\beta_{n}-\tilde{\beta_{n}}=\frac{b^{n+2}}{2 (n+1)!} \left( \frac{3}{2} \right)_{n} \Bigg[ \left( n+ \frac{3}{2} \right) F\left(\frac{1}{2},n+\frac{5}{2},n+2;b^2\right)-\frac{1}{2}F\left(\frac{1}{2},n+ \frac{3}{2},n+1;b^2\right) \bigg]. \]
Thus using $(\ref{eq12})$, one may check the following expression
\begin{align*}
\mathcal{L}_2(h_1,h_2)(\omega)&=\frac{i}{2}\sum_{n=1}^{+ \infty} (n+1)\frac{b^2}{2}F\left(\frac{1}{2},\frac{3}{2},2;b^2\right)a_{n}(\omega^{n+1}-\overline{\omega}^{n+1} )\\
&-\frac{i}{2}\sum_{n=1}^{+ \infty}\frac{b^{n+2}}{n!}(\frac{1}{2})_{n+1}F\left(\frac{1}{2},n+\frac{3}{2},n+2;b^2\right)c_{n}(\omega^{n+1}-\overline{\omega}^{n+1} ).
\end{align*}
Now we focus on $\mathcal{L}_3(h_1,h_2)$ given by
\begin{align*}
\mathcal{L}_3(h_1,h_2)(\omega)&=\textnormal{Im}\left\lbrace -\overline{\omega}h_2'(\overline{\omega})\fint_{\T}  \frac{\tau-b\omega }{\vert \tau-b\omega\vert }\frac{d\tau}{\tau}+\overline{\omega}b\fint_{\T}  \frac{\omega h_2'(\omega)-\tau h_1'(\tau)}{\vert b\omega-\tau\vert }\frac{d\tau}{\tau}\right\rbrace \\
&+\textnormal{Im} \left\lbrace b\overline{\omega}\fint_{\T}  \frac{(\tau-b\omega )\textnormal{Re}\Big(\big(h_1(\tau)-h_2(\omega)\big) \big(\overline{\tau-b\omega}\big)\Big)}{\vert \tau-b\omega \vert^3} \frac{d\tau}{\tau} \right\rbrace .
\end{align*}
Observe that
\begin{align*} 
b\overline{\omega}\fint_{\T}  \frac{(\tau-b\omega )\textnormal{Re}\Big(\big(h_1(\tau)-h_2(\omega)\big)\big(\overline{\tau-b\omega}\big)\Big)}{\vert \tau-b\omega \vert^3} \frac{d\tau}{\tau}&=\frac{b}{2}\overline{\omega}\fint_{\T}  \frac{h_1(\tau)-h_2(\omega)}{\vert \tau-b\omega \vert} \frac{d\tau}{\tau}\\
&+\frac{b}{2}\overline{\omega}\fint_{\T} \frac{\Big(\tau-b\omega \Big)^2 \Big(\overline{h_1(\tau)-h_2(\omega)}\Big)}{\vert \tau-b\omega \vert^3} \frac{d\tau}{\tau}.
\end{align*}
Replacing $h_j$  and $h_j'$ by their expressions we get
\begin{align*}
\mathcal{L}_3(h_1,h_2)(\omega)&=\textnormal{Im}\left\lbrace  \sum_{n=1}^{+ \infty}n c_n \omega^n \fint_{\T}  \frac{\tau-b\omega }{\vert \tau-b\omega\vert }\frac{d\tau}{\tau}-\overline{\omega}b \sum_{n=1}^{+ \infty} \left( n+ \frac{1}{2} \right) \fint_{\T}  \frac{ c_n \overline{\omega}^n- a_n \overline{\tau}^n}{\vert b\omega-\tau\vert }\frac{d\tau}{\tau}\right\rbrace \\
&+\textnormal{Im}\left\lbrace \frac{b}{2}\overline{\omega} \sum_{n=1}^{+ \infty}\fint_{\T} \frac{\Big(b\omega- \tau \Big) \Big(  c_n \omega^n- a_n \tau^n\Big)}{\vert \tau-b\omega \vert^3} d\tau +\frac{b^2}{2} \sum_{n=1}^{+ \infty}\fint_{\T} \frac{\Big(1-b\omega \overline{\tau} \Big)\Big(  c_n \omega^n- a_n \tau^n\Big)}{\vert \tau-b\omega \vert^3}d\tau \right\rbrace.
\end{align*}
As before, we shall  split the computations in many parts. Thanks to  $(\ref{c3})$ and $(\ref{c4})$, the first term takes the form
\begin{align*}
\sum_{n=1}^{+ \infty}n c_n \omega^n \fint_{\T} \frac{\tau-b\omega }{\vert \tau-b\omega\vert }\frac{d\tau}{\tau} & =\sum_{n=1}^{+ \infty}n c_n \omega^n\fint_{\T} \frac{\tau-b\omega }{\vert \tau b-\omega\vert }\frac{d\tau}{\tau}\\
&=\sum_{n=1}^{+ \infty}n bc_n  \Big[ \frac{1}{2}F\left(\frac{1}{2},\frac{3}{2},2;b^2\right)-F\left(\frac{1}{2},\frac{1}{2},1;b^2\right)\Big]\omega^{n+1}.
\end{align*}
Note that we have used in the first line the identity
$$
\forall \tau, \omega\in \T,\quad |\tau-b\omega|=|b\tau-\omega|
$$
For the second term, we use $(\ref{c3})$ and $(\ref{c4})$ to obtain 
\begin{align*}
-\overline{\omega}b \sum_{n=1}^{+ \infty} \left( n+ \frac{1}{2} \right) \fint_{\T}  \frac{ c_n \overline{\omega}^n- a_n \overline{\tau}^n}{\vert b\omega-\tau\vert }\frac{d\tau}{\tau}&=- \sum_{n=1}^{+ \infty} \left( n+ \frac{1}{2} \right) b c_n  F\left(\frac{1}{2},\frac{1}{2},1;b^2\right) \overline{\omega}^{n+1}\\
&+\sum_{n=1}^{+ \infty} \left( n+ \frac{1}{2} \right) a_n \frac{b^{n+1}}{n!} \left( \frac{1}{2} \right)_n   F\left(\frac{1}{2},n+\frac{1}{2},n+1;b^2\right)\overline{\omega}^{n+1}.
\end{align*}
The computation of the third term can be done in view of $(\ref{c8})$,  \begin{align*}
\frac{b}{2}\overline{\omega} \sum_{n=1}^{+ \infty}\fint_{\T} \frac{\Big(b\omega-\tau \Big) \Big(  c_n \omega^n- a_n \tau^n\Big)}{\vert \tau-b\omega \vert^3} d\tau & =\sum_{n=1}^{+ \infty}  \frac{a_n}{2} \frac{b^{n+3}}{(n+2)!} \left( \frac{1}{2} \right)_{n+2} F\left(\frac{3}{2},n+\frac{5}{2},n+3;b^2\right)\omega^{n+1}\\
&-\sum_{n=1}^{+ \infty} \frac{3b^3}{16}c_n F\left(\frac{3}{2},\frac{5}{2},3;b^2\right) \omega^{n+1} .
\end{align*}
For the last term, we use $(\ref{c5})$ and $(\ref{c6})$
\begin{align*}
&\frac{b^2}{2} \sum_{n=1}^{+ \infty}\fint_{\T} \frac{\Big(1-b\omega \overline{\tau} \Big)\Big(  c_n \omega^n- a_n \tau^n\Big)}{\vert \tau-b\omega \vert^3}d\tau=\sum_{n=1}^{+ \infty} \frac{b^3}{2}c_n  \Bigg[ \frac{3}{2} F\left(\frac{3}{2},\frac{5}{2},2;b^2\right)-F\left(\frac{3}{2},\frac{3}{2},1;b^2\right)\Bigg]\omega^{n+1} \\
&-\sum_{n=1}^{+ \infty} \frac{b^{n+3}}{2}a_n  \Bigg[ \frac{\left( \frac{3}{2}\right)_{n+1}}{(n+1)!}F\left(\frac{3}{2},n+\frac{5}{2},n+2;b^2\right) -\frac{\left( \frac{3}{2}\right)_{n}}{n!}F\left(\frac{3}{2},n+\frac{3}{2},n+1;b^2\right) \Bigg]\omega^{n+1}
\end{align*}
Finally, we obtain the following expression
\[\mathcal{L}_3(h_1,h_2)(\omega)=\frac{i}{2} \sum_{n=1}^{+ \infty}  \left[a_{n}(\tilde{\Delta}_{n}-\Delta_{n})+c_{n}(\tilde{\theta}_{n}-\theta_{n})\right](\omega^{n+1}-\overline{\omega}^{n+1} )\]
with 
\begin{align*}
\Delta_{n}&=\frac{b^{n+3}}{2}  \frac{(\frac{1}{2})_{n+2}}{(n+2)!}F\left(\frac{3}{2},n+\frac{5}{2},n+3;b^2\right)-\frac{b^{n+3}}{2}   \frac{(\frac{3}{2})_{n+1}}{(n+1)!}F\left(\frac{3}{2},n+\frac{5}{2},n+2;b^2\right)\\
& +\frac{b^{n+3}}{2}\frac{(\frac{3}{2})_{n}}{n!}F\left(\frac{3}{2},n+\frac{3}{2},n+1;b^2\right),\\
\tilde{\Delta}_{n}&=\left(n+\frac{1}{2} \right)b^{n+1} \frac{(\frac{1}{2})_{n}}{n!}F\left(\frac{1}{2},n+\frac{1}{2},n+1;b^2\right),\\
\theta_{n}&= bn \left[  \frac{1}{2}F\left(\frac{1}{2},\frac{3}{2},2;b^2\right)- F\left(\frac{1}{2},\frac{1}{2},1;b^2\right)\right] -\frac{3b^3}{16} F\left(\frac{3}{2},\frac{5}{2},3;b^2\right)\\
&+ \frac{b^3}{2}\Bigg[ \frac{3}{2} F\left(\frac{3}{2},\frac{5}{2},2;b^2\right)-F\left(\frac{3}{2},\frac{3}{2},1;b^2\right)\Bigg]\\
\intertext{ and }
\tilde{\theta}_{n}&= -\left(n+\frac{1}{2} \right)bF\left(\frac{1}{2},\frac{1}{2},1;b^2\right).
\end{align*}
We want to simplify the expression of $\mathcal{L}_3(h_1,h_2)$. First we note that
\begin{align*}
\theta_n-\tilde{\theta}_n&= \frac{nb}{2} F\left(\frac{1}{2},\frac{3}{2},2;b^2\right)+\frac{b}{2}F\left(\frac{1}{2},\frac{1}{2},1;b^2\right)-\frac{3b^3}{16}F\left(\frac{3}{2},\frac{5}{2},3;b^2\right)\\
&+\frac{b^3}{2} \Bigg[ \frac{3}{2} F\left(\frac{3}{2},\frac{5}{2},2;b^2\right)-F\left(\frac{3}{2},\frac{3}{2},1;b^2\right)\Bigg].
\end{align*}
By using $(\ref{eq11})$ we get 
\begin{align*}
\theta_n-\tilde{\theta}_n&= \frac{b}{2}(n+1) F\left(\frac{1}{2},\frac{3}{2},2;b^2\right)+\frac{b}{2}F\left(\frac{1}{2},\frac{1}{2},1;b^2\right)-\frac{b}{2} F\left(\frac{1}{2},\frac{3}{2},1;b^2\right)\\
&+\frac{b^3}{2} \Bigg[ \frac{3}{2} F\left(\frac{3}{2},\frac{5}{2},2;b^2\right)-F\left(\frac{3}{2},\frac{3}{2},1;b^2\right)\Bigg].
\end{align*}
Now combining $(\ref{eq11})$ with $a=b=\frac{1}{2}$, $c=1$ and $z=b^2$ as well as  $(\ref{eq4})$ with  $a=b=\frac{3}{2}$, $c=1$ and $z=b^2$, one may obtain the following identity
\[  F\left(\frac{1}{2},\frac{1}{2},1;b^2\right)- F\left(\frac{1}{2},\frac{3}{2},1;b^2\right)+\frac{3}{2}b^2  F\left(\frac{3}{2},\frac{5}{2},2;b^2\right)-b^2 F\left(\frac{3}{2},\frac{3}{2},1;b^2\right)=0.\]
Consequently, 
\[\theta_n-\tilde{\theta}_n=\frac{b}{2}(n+1)F\left(\frac{1}{2},\frac{3}{2},2;b^2\right).\]
On the other hand
\begin{align*}
\Delta_n-\tilde{\Delta}_n&= b^{n+1} \frac{(\frac{1}{2})_{n+1}}{(n+1)!} \Bigg[\frac{b^2}{2} \frac{(n+\frac{3}{2})}{(n+2)}F\left(\frac{3}{2},n+\frac{5}{2},n+3;b^2\right) -b^2  \left( n+ \frac{3}{2} \right) F\left(\frac{3}{2},n+\frac{5}{2},n+2;b^2\right)\\
&+b^2 (n+1)F\left(\frac{3}{2},n+\frac{3}{2},n+1;b^2\right)- (n+1) F\left(\frac{1}{2},n+\frac{1}{2},n+1;b^2\right)\Bigg].
\end{align*}
Applying  $(\ref{eq11})$ with $a=\frac{1}{2},b=n+\frac{1}{2},c=n+1$ and $z=b^2$ one gets,
\begin{align*}
\Delta_n-\tilde{\Delta}_n&= b^{n+1} \frac{(\frac{1}{2})_{n+1}}{(n+1)!} \Bigg[\frac{b^2}{2} \frac{(n+\frac{3}{2})}{(n+2)}F\left(\frac{3}{2},n+\frac{5}{2},n+3;b^2\right) -b^2  \left( n+ \frac{3}{2} \right) F\left(\frac{3}{2},n+\frac{5}{2},n+2;b^2\right)\\
&+b^2 (n+1)F\left(\frac{3}{2},n+\frac{3}{2},n+1;b^2\right)- (n+1) F\left(\frac{1}{2},n+\frac{3}{2},n+1;b^2\right)+\frac{b^2}{2}F\left(\frac{3}{2},n+\frac{3}{2},n+2;b^2\right)\Bigg].
\end{align*}
Again applying $(\ref{eq11})$ with $a=\frac{1}{2},b=n+\frac{3}{2},c=n+2$ and $z=b^2$,  we deduce
\begin{align*}
\Delta_n-\tilde{\Delta}_n&= b^{n+1} \frac{(\frac{1}{2})_{n+1}}{(n+1)!} \Bigg[(n+\frac{3}{2})F\left(\frac{1}{2},n+\frac{5}{2},n+2;b^2\right) -(n+\frac{3}{2})F\left(\frac{1}{2},n+\frac{3}{2},n+2;b^2\right) \\
&-b^2  \left( n+ \frac{3}{2} \right) F\left(\frac{3}{2},n+\frac{5}{2},n+2;b^2\right)+b^2 (n+1)F\left(\frac{3}{2},n+\frac{3}{2},n+1;b^2\right)\\
&- (n+1) F\left(\frac{1}{2},n+\frac{3}{2},n+1;b^2\right)+\frac{b^2}{2}F\left(\frac{3}{2},n+\frac{3}{2},n+2;b^2\right)\Bigg].
\end{align*}
We use $(\ref{eq4})$ with $a=\frac{3}{2},b=n+\frac{3}{2},c=n+1$ and $z=b^2$ to cancel some terms 
\begin{align*}
\Delta_n-\tilde{\Delta}_n&= b^{n+1} \frac{(\frac{1}{2})_{n+1}}{(n+1)!} \Bigg[(n+\frac{3}{2})F\left(\frac{1}{2},n+\frac{5}{2},n+2;b^2\right) -(n+\frac{3}{2})F\left(\frac{1}{2},n+\frac{3}{2},n+2;b^2\right) \\
&- (n+1) F\left(\frac{1}{2},n+\frac{3}{2},n+1;b^2\right)\Bigg].
\end{align*}
Finally, using $(\ref{eq4})$ with $a=\frac{1}{2},b=n+\frac{3}{2},c=n+1$ and $z=b^2$, we obtain
\[\Delta_n-\tilde{\Delta}_n= -\frac{b^{n+1}}{(n+1)!}\left(\frac{1}{2}\right)_{n+1} (n+1)F\left(\frac{1}{2},n+\frac{3}{2},n+2;b^2\right).\]
Consequently, we have
\begin{align*}
\mathcal{L}_3(h_1,h_2)(\omega)=&\frac{i}{2}\sum_{n=1}^{+ \infty} \frac{b^{n+1}}{(n+1)!}\left(\frac{1}{2}\right)_{n+1} (n+1)F\left(\frac{1}{2},n+\frac{3}{2},n+2;b^2\right)a_{n}(\omega^{n+1}-\overline{\omega}^{n+1} )\\
&-\frac{i}{2}\sum_{n=1}^{+ \infty}\frac{b}{2}(n+1)F\left(\frac{1}{2},\frac{3}{2},2;b^2\right)c_{n}(\omega^{n+1}-\overline{\omega}^{n+1} ).
\end{align*}
As we have (see \cite{7})
\[\Lambda_n(b)=\frac{\left(\frac{1}{2}\right)_n}{n!}b^{n-1}F\left(\frac{1}{2},n+\frac{1}{2},n+1,b^2\right).\]
the proof of the proposition is now achieved.
\end{proof}
\subsection{Monotonicity of the eigenvalues}
 In what follows we shall use the variable $\lambda \triangleq 1-2\Omega$ instead of $\Omega$. The main task is to list the suitable conditions on the used parameters in order to guarantee a one-dimensional kernel. Recall from Proposition \ref{Matrice} that the operator $DG(\Omega,0,0)$ acts as a  Fourier matrix multiplier 
 and the determinant of each matrix $M_n$ is given by
 \begin{equation}\label{dis}
\textnormal{det}(M_n)=\frac{b}{4}(\lambda^2-2C_n\lambda+D_n)
\end{equation}
with\[ \left\lbrace\begin{array}{lcl}
C_n&=&1+\left(\frac{1}{b}-1\right)S_n-(1-b^2)\Lambda_1(b)\\
D_n&=&-\frac{4}{b}S_n^2+2 \left[\frac{1}{b}-1+2(1+b)\Lambda_1(b)\right]S_n-4b^2\left(\Lambda_1^2(b)-\Lambda_n^2(b)\right)-2(1-b^2)\Lambda_1(b)+1\\
\end{array}\right. .
\]
From that proposition one can easily see that the kernel of $DG(\Omega,0,0)$ is non trivial if and only if
$$
\big\{\exists n\geq2,\, \textnormal{det}(M_n)=0\big\}
$$
Therefore the dimension of the kernel is related to   the structure of the eigenvalues and to how they depend on  the frequency modes. 
Observe that $\lambda\mapsto  \textnormal{det}(M_n)$ is  a second order polynomial  and the roots structure  depends on the reduced discriminant which is given by
\[\Delta_n=\left(\frac{1}{b}+1\right) S_n-(1+b^2)\Lambda_1(b) ^2-4b^2\Lambda_n^2(b).\]
We shall prove the following proposition.
\begin{proposition}\label{vp}
\begin{itemize}
\item[(1)] For any $ n \in N^*$ we have $\Lambda_n(b) \geq 0$, $n \mapsto S_n$ is a  strictly increasing sequence, $n\mapsto \Lambda_n(b)$ is a strictly decreasing sequence and $b\mapsto \Lambda_n(b)$ is a strictly increasing function.
\item[(2)] There exists $N \geq 2$ such that for any $n \geq N$ we get $\Delta_n >0$ and the equation $\textnormal{det}(M_n)=0$ admits two different real solutions given by
\[ \lambda_n^{\pm}=C_n\pm \sqrt{\Delta_n}.\]
\item[(3)]The sequences $(\Delta_n)_{n\geq N}$ and $(\lambda_n^+)_{n\geq N}$ are strictly increasing and  $(\lambda_n^-)_{n\geq N}$ is strictly decreasing.
\item[(4)] $\forall m >n>N$ we have
\[ \lambda_m^-<\lambda_n^-<\lambda_n^+<\lambda_m^+. \]
\end{itemize}
\end{proposition}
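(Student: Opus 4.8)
For \emph{Part (1)}, the positivity of $\Lambda_n(b)$ and the strict growth of $n\mapsto S_n$ are immediate: $S_{n+1}-S_n=\tfrac{2}{\pi(2n+1)}>0$, and every term of the power series defining $F\big(\tfrac12,n+\tfrac12,n+1;b^2\big)$ is positive, whence $\Lambda_n(b)=\tfrac{(1/2)_n}{n!}b^{n-1}F\big(\tfrac12,n+\tfrac12,n+1;b^2\big)>0$. For the two monotonicity claims about $\Lambda_n$ I would use the Euler integral representation of the hypergeometric function recalled in Section~3 (legitimate since $n+1>n+\tfrac12>0$); cancelling the Gamma prefactors against $\tfrac{(1/2)_n}{n!}$ gives the transparent formula
\[
\Lambda_n(b)=\frac1\pi\int_0^1\frac{b^{n-1}x^{n-1/2}}{\sqrt{(1-x)(1-b^2x)}}\,dx .
\]
From it, $\Lambda_{n+1}(b)=\frac1\pi\int_0^1\frac{(b^2x)\,b^{n-1}x^{n-1/2}}{\sqrt{(1-x)(1-b^2x)}}\,dx<\Lambda_n(b)$ because $0<b^2x<1$ on $(0,1)$, so $n\mapsto\Lambda_n(b)$ is strictly decreasing; and for each fixed $x\in(0,1)$ the integrand is strictly increasing in $b\in(0,1)$ (the factor $b^{n-1}$ being non-decreasing and $(1-b^2x)^{-1/2}$ strictly increasing), so $b\mapsto\Lambda_n(b)$ is strictly increasing. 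The same formula gives $0<\Lambda_n(b)\le b^{n-1}\Lambda_1(b)$, hence $\Lambda_n(b)\to0$, a fact used below.

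For \emph{Part (2)}, recall $\det(M_n)=\tfrac b4\big(\lambda^2-2C_n\lambda+D_n\big)$, so the roots of $\det(M_n)=0$ are $C_n\pm\sqrt{C_n^2-D_n}$; a short computation (using $\tfrac{(1-b)^2}{b^2}+\tfrac4b=\tfrac{(1+b)^2}{b^2}$ and $(1-b^2)^2+4b^2=(1+b^2)^2$) confirms $C_n^2-D_n=E_n^2-4b^2\Lambda_n^2(b)=\Delta_n$, where $E_n\triangleq\big(\tfrac1b+1\big)S_n-(1+b^2)\Lambda_1(b)$. Since $S_n\to+\infty$ while $\Lambda_1(b)$ is fixed and $\Lambda_n(b)\to0$ by Part (1), we get $E_n\to+\infty$, so $E_n>2b\Lambda_n(b)\,(\ge0)$ holds for large $n$; a rearrangement (dividing by $\tfrac1b+1$) shows this is exactly
\[
S_n>b\Big(\tfrac{1+b^2}{1+b}\Lambda_1(b)+\tfrac{2b}{1+b}\Lambda_n(b)\Big).
\]
Because $S_n$ increases and $\Lambda_n(b)$ decreases, once this holds at one index it holds at all larger ones; let $N$ be the smallest such index. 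At $n=1$ it fails ($S_1=0$ while the right side is $b(1+b)\Lambda_1(b)>0$), so $N\ge2$, and for every $n\ge N$ we have $E_n>0$ and $\Delta_n=E_n^2-4b^2\Lambda_n^2(b)>0$; hence $\det(M_n)=0$ has the two distinct real roots $\lambda_n^{\pm}=C_n\pm\sqrt{\Delta_n}$.

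For \emph{Parts (3)--(4)}, write $C_n=1+\big(\tfrac1b-1\big)S_n-(1-b^2)\Lambda_1(b)$ and note $C_{n+1}-C_n=\big(\tfrac1b-1\big)(S_{n+1}-S_n)>0$ and $E_{n+1}-E_n=\tfrac{1+b}{1-b}(C_{n+1}-C_n)>C_{n+1}-C_n$. Part (1) immediately gives that $\Delta_n=E_n^2-4b^2\Lambda_n^2(b)$ is strictly increasing for $n\ge N$ ($E_n>0$ strictly increasing, $\Lambda_n(b)>0$ strictly decreasing), and therefore that $\lambda_n^{+}=C_n+\sqrt{\Delta_n}$ is strictly increasing, being a sum of two strictly increasing sequences. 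For $\lambda_n^{-}=C_n-\sqrt{\Delta_n}$ one must show $\sqrt{\Delta_n}$ grows strictly faster than $C_n$: from $\Lambda_n(b)>0$ we get $\sqrt{\Delta_n}<E_n$, hence $\sqrt{\Delta_{n+1}}+\sqrt{\Delta_n}<E_{n+1}+E_n$, while $\Delta_{n+1}-\Delta_n=\big(E_{n+1}^2-E_n^2\big)+4b^2\big(\Lambda_n^2(b)-\Lambda_{n+1}^2(b)\big)>E_{n+1}^2-E_n^2$, so
\[
\sqrt{\Delta_{n+1}}-\sqrt{\Delta_n}=\frac{\Delta_{n+1}-\Delta_n}{\sqrt{\Delta_{n+1}}+\sqrt{\Delta_n}}>\frac{(E_{n+1}-E_n)(E_{n+1}+E_n)}{E_{n+1}+E_n}=E_{n+1}-E_n>C_{n+1}-C_n ,
\]
whence $\lambda_{n+1}^{-}-\lambda_n^{-}=(C_{n+1}-C_n)-\big(\sqrt{\Delta_{n+1}}-\sqrt{\Delta_n}\big)<0$, i.e. $(\lambda_n^{-})_{n\ge N}$ is strictly decreasing. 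Part (4) is then formal: for $m>n>N$, monotonicity gives $\lambda_m^{-}<\lambda_n^{-}$ and $\lambda_n^{+}<\lambda_m^{+}$, while $\lambda_n^{-}<\lambda_n^{+}$ because $\Delta_n>0$.

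The only genuinely delicate point is the monotonicity of $\lambda_n^{-}$ in Part (3): unlike $\lambda_n^{+}$ it is a difference of two increasing sequences, and one has to use both the strict positivity of $\Lambda_n(b)$ (to obtain $\sqrt{\Delta_n}<E_n$) and the gap $\tfrac{1+b}{1-b}>1$ between the growth rates of $E_n$ and $C_n$. Everything else, including Part (4), follows from Part (1) together with elementary bookkeeping — in particular the identity $C_n^2-D_n=\Delta_n$, which is routine but unavoidable.
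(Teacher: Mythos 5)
Your proof is correct and takes essentially the same route as the paper: the Euler integral representation of $\Lambda_n(b)$ for part (1), positivity of the dominant part of the discriminant to define $N$ (your rearranged inequality is exactly the paper's characterization of $N$), and the bounds $\sqrt{\Delta_n}<E_n$ plus the growth-rate comparison between $E_n$ and $C_n$ for parts (3)--(4), your version of the $\lambda_n^-$ estimate being a cleaner reorganization of the paper's chain of inequalities. One harmless slip: in the monotonicity of $n\mapsto\Lambda_n(b)$ the extracted factor should be $bx$, not $b^2x$, which does not affect the conclusion since $0<bx<1$ on $(0,1)$.
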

\begin{proof}
$(1)$ The positivity and the monotonicity of  $\Lambda_n(b)$ follow easily from the integral representation
 \[\Lambda_n(b)=\frac{b^{n-1}}{\Gamma^2(\frac{1}{2})}\int_0^1 x^{n-\frac{1}{2}}(1-x)^{-\frac{1}{2}}(1-b^2x)^{-\frac{1}{2}} dx \text{, for }b\in(0,1).\]
 As to the monotonicity of $S_n$ it is obvious. \\
$(2)$  We write $\Delta_n(b)=E_n(b)F_n(b)$
with
\[ \left\lbrace\begin{array}{lcl}
E_n(b)&=&\left(\frac{1}{b}+1 \right)S_n-(1+b^2)\Lambda_1(b)-2b\Lambda_n(b)\\
F_n(b)&=&\left(\frac{1}{b}+1\right)S_n-(1+b^2)\Lambda_1(b)+2b\Lambda_n(b)\\
\end{array}\right..
\]
We remark that
\[ \Delta_n(b)>0  \textnormal{ if and only if } E_n(b)>0 \text{ or } F_n(b)<0. \]
Using the strictly monotonicity of the sequences $(\Lambda_n)_{n\in \N*}$ and  $(S_n)_{n\in \N*}$  we get
\[E_{n+1}(b)-E_n(b)=\left(1+\frac{1}{b}\right)(S_{n+1}-S_n)-2b \big( \Lambda_{n+1}(b)-\Lambda_n(b)\big)>0.\]
Therefore $(E_n(b))_{n\in \N*}$ is a strictly increasing.
As $\underset{n\mapsto + \infty}{\lim}E_n(b)=+\infty$ and 
\[E_1(b)=-(1+b)^2\Lambda_1(b),\] we obtain that 
\[ \exists N \in \N^{\star} \text{such that } \forall n \geq N , \; E_n(b)>0. \]
This implies the assertion $(2)$.

$(3)$ Straightforward computations yield
\begin{align*}
\Delta_{n+1}-\Delta_n&=\left(1+\frac{1}{b}\right)^2(S_{n+1}^2-S_n^2)-4b^2\left(\Lambda_{n+1}^2(b)-\Lambda_n^2(b)\right)-2(1+b^2)\left(1+\frac{1}{b}\right)\Lambda_1(b)(S_{n+1}-S_n)\\
&=\left(1+\frac{1}{b}\right)(S_{n+1}-S_n)\bigg[\left(1+\frac{1}{b}\right)(S_{n+1}+S_n)-2(1+b^2)\Lambda_1(b)\bigg]-4b^2\left(\Lambda^2_{n+1}(b)-\Lambda^2_n(b)\right)\\
&>\left(1+\frac{1}{b}\right)(S_{n+1}-S_n)\bigg[\left(1+\frac{1}{b}\right)(S_{n+1}+S_n)-2(1+b^2)\Lambda_1(b) \bigg]\\
&>\left(1+\frac{1}{b}\right)(S_{n+1}-S_n) \bigg[\left(1+\frac{1}{b}\right)(S_{n+1}+S_n)-2(1+b^2)\Lambda_1(b)-2b \Big[\Lambda_{n+1}(b)+\Lambda_n(b)\Big] \bigg]\\
&>\left(1+\frac{1}{b}\right)(S_{n+1}-S_n)\big(E_{n+1}(b)+E_n(b) \big)>0 .
\end{align*}

As $(\Delta_n)_{n\geq n}$ and $(S_n)_{n\in \N^{\star}}$ are strictly increasing,  $(\lambda_n^+)_{n\geq N}$ is also strictly increasing.
We focus now on  $\lambda_n^-$:
\begin{align*}
\lambda_{n+1}^--\lambda_n^-&=\left(\frac{1}{b}-1\right)(S_{n+1}-S_n)-\big[\sqrt{\Delta_{n+1}}-\sqrt{\Delta_n}\big]\\
&=\left(\frac{1}{b}-1\right)(S_{n+1}-S_n)-\frac{\Delta_{n+1}-\Delta_n}{\sqrt{\Delta_{n+1}}+\sqrt{\Delta_n}}\\
&=-\frac{1}{\sqrt{\Delta_{n+1}}+\sqrt{\Delta_n}} \left[\left(1+\frac{1}{b}\right)(S_{n+1}-S_n)\left[\left(1+\frac{1}{b}\right)(S_{n+1}+S_n)-2(1+b^2)\Lambda_1(b)\right] \right]\\
&+\frac{4b^2(\Lambda_{n+1}^2(b)-\Lambda_n^2(b))}{\sqrt{\Delta_{n+1}}+\sqrt{\Delta_n}}+\left(\frac{1}{b}-1\right)(S_{n+1}-S_n)\\
&<-\frac{1}{\sqrt{\Delta_{n+1}}+\sqrt{\Delta_n}} \left[\left(1+\frac{1}{b}\right)(S_{n+1}-S_n)\left[\left(1+\frac{1}{b}\right)(S_{n+1}+S_n)-2(1+b^2)\Lambda_1(b)\right] \right]\\
&+\left(\frac{1}{b}-1\right)(S_{n+1}-S_n)\\
&<\frac{1}{b}\left[1-\frac{1}{\sqrt{\Delta_{n+1}}+\sqrt{\Delta_n}} \left[\left(1+\frac{1}{b}\right)(S_{n+1}+S_n)-2(1+b^2)\Lambda_1(b) \right]\right](S_{n+1}-S_n)\\
&-\left[1+\frac{1}{\sqrt{\Delta_{n+1}}+\sqrt{\Delta_n}} \left[\left(1+\frac{1}{b}\right)(S_{n+1}+S_n)-2(1+b^2)\Lambda_1(b) \right]\right](S_{n+1}-S_n)\\
&<0
\end{align*}
because $\big[\left(1+\frac{1}{b}\right)(S_{n+1}+S_n)-2(1+b^2)\Lambda_1(b) \big]>E_{n+1}(b)+E_n(b)>0$ and $\sqrt{\Delta_n}<\left(1+\frac{1}{b}\right)S_n-(1+b^2)\Lambda_1(b)$.

Consequently the sequence $(\lambda_n^-)_{n\geq N}$ is strictly decreasing.

$(4)$  It is obvious and follows from $(2)$ and $(3)$.
\end{proof}
\subsection{Proof of Theorem \ref{existence}}\label{Bifurcation at simple eigenvalues}\label{conclusion}
This section is dedicated to the proof of the main result of this paper which is deeply related to the spectral study developed in the preceding section combined with Crandall-Rabinowitz's theorem.  To proceed, fix $b \in (0,1)$ and $m\geq N$, where $N$ was defined in Proposition $\ref{vp}$. Set,
\begin{eqnarray*}
X^{k+\log}_m=  X^{k+\log}\cap \mathcal{A}_\varepsilon^m.
\end{eqnarray*}
 We define the ball of radius $r \in (0,1)$ by 
 \[ B_r^m = \Big\{ f \in X^{k+\log}_m, \Vert f \Vert_{X^{k+\log}_m} \leq r \Big\}\]
 and we introduce the neighborhood of the trivial solution $(0,0)$, 
 \[ V_{m,r}\triangleq B_r^m \times B_r^m. \]
 The set $V_{m,r}$ is endowed with the induced topology of the product spaces.
 Take $(f_1,f_2) \in V_{m,r}$ then the expansions of the associated conformal mappings $\Phi_1,\Phi_2$ in $\mathbb{\Delta}_{\varepsilon} $ are given successively by
 \[ \Phi_1(z)=z+ f_1(z)=z\left( 1+\sum_{n=1}^{+ \infty} \frac{a_n}{z^{nm}}\right)\]
 and
 \[ \Phi_2(z)=bz+ f_2(z)=z\left( b+\sum_{n=1}^{+ \infty} \frac{c_n}{z^{nm}}\right).\]
Consequently for any $z \in \mathbb{\Delta}_{\varepsilon} $
\begin{equation}\label{mprem}
 \Phi_j(e^{\frac{2i\pi}{m}}z) = e^{\frac{2i\pi}{m}} \Phi_j(z) \text{, }j=1,2 \text{ and } \vert z \vert > \varepsilon .
 \end{equation}
 
From Proposition $\ref{vp}$ recall the definition of the eigenvalues $\lambda_m^{\pm}$ and the associated angular velocities are
\begin{align*}
\Omega_m^{\pm}&= \frac{1}{2}-\frac{1}{2}\lambda_m^{\pm}\\
&=\frac{1}{2}\tilde{C}_m \pm \frac{1}{2} \sqrt{\Delta_m}
\end{align*}
with
\[\Delta_m=\bigg(\Big(\frac{1}{b}+1\Big)S_m-(1+b^2)\Lambda_1(b)\bigg)^2-4b^2\Lambda_m^2(b)\]
and
\[\tilde{C}_m=\left(1-\frac{1}{b}\right)S_m+(1-b^2)\Lambda_1(b).\]
Note that $S_m$ and $\Lambda_m(b)$ were introduced in Proposition $\ref{Matrice}$. The V-states equations are described in $(\ref{G_j})$ and $(\ref{V-state})$ which we restate here, for $ j \in \lbrace 1,2 \rbrace$,
\[\tilde{G}(\Omega,\Phi_1,\Phi_2) \triangleq G(\Omega,f_1,f_2) \text{ and  } G=(G_1,G_2) \]
with
\[\tilde{G}_j(\Omega,\Phi_1,\Phi_2)(\omega)= \textnormal{Im} \left\lbrace \left(\Omega\Phi_j(\omega)- \fint_{\T} \frac{\tau\Phi_1'(\tau)-\omega \Phi_j'(\omega)}{\vert \Phi_1(\tau)-\Phi_j(\omega)\vert }\frac{d\tau}{\tau}+ \fint_{\T} \frac{\tau\Phi_2'(\tau)-\omega \Phi_j'(\omega)}{\vert \Phi_2(\tau)-\Phi_j(\omega)\vert }\frac{d\tau}{\tau}\right)\overline{\Phi_j'(\omega)}\overline{\omega} \right\rbrace. \]
The following result is more precise than Theorem \ref{existence}.
\begin{theorem}\label{appli}
Let $k\geq3, N$ be as in the Proposition $\ref{vp}$, $m\geq N$, and take $\Omega\in \lbrace \Omega_m^{\pm} \rbrace$. Then, the following assertions hold true.
\begin{itemize}
\item[(1)] There  exists $r>0$ such that $G: \R \times V_{m,r} \mapsto Y^{k-1}_m \times Y^{k-1}_m$ is well- defined and is of class $C^1$.
\item[(2)] The kernel of $DG(\Omega,0,0)$ is one dimensional and generated by
\[  v_{0,m}:\omega\in T \mapsto \left(\begin{array}{c}
\Omega+\frac{S_m}{b}-\Lambda_1(b) \\ 
-\Lambda_m(b)
\end{array} \right) \overline{\omega}^{m-1}.\]
\item[(3)] The range of $DG(\Omega,0,0)$ is closed and is of co-dimension one in $Y^{k-1}_m \times Y^{k-1}_m$.
\item[(4)] Transversality assumption: If $\Omega$ is a simple eigenvalue $(\Delta_m>0)$ then
\[ \partial_{\Omega}DG(\Omega_m^{\pm},0,0)v_{0,m} \notin \textnormal{Im}\left(DG(\Omega_m^{\pm},0,0)\right).\]
\end{itemize}
\end{theorem}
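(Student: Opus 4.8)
The four assertions all reduce to facts already established in the excerpt, so my plan is to dispatch them in order, exploiting the explicit diagonalization of the linearized operator.

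For assertion (1), Propositions \ref{biendef} and \ref{regularite} already give that $G=(G_1,G_2)$ is well-defined and of class $C^1$ from $\R\times V_r\times V_r$ into $Y^{k-1}\times Y^{k-1}$, so since $V_{m,r}\subset V_r\times V_r$ the only new point is preservation of the $m$-fold symmetry, i.e.\ that $G$ maps $\R\times V_{m,r}$ into $Y^{k-1}_m\times Y^{k-1}_m$. I would derive this from \eqref{mprem}: the change of variables $\tau\mapsto e^{2i\pi/m}\tau$ in the singular integral defining $S(\Phi_i,\Phi_j)$ gives $S(\Phi_i,\Phi_j)(e^{2i\pi/m}\omega)=e^{2i\pi/m}S(\Phi_i,\Phi_j)(\omega)$, and combining this with $\Phi_j(e^{2i\pi/m}\omega)=e^{2i\pi/m}\Phi_j(\omega)$ and $\Phi_j'(e^{2i\pi/m}\omega)=\Phi_j'(\omega)$ one checks $G_j(\Omega,f_1,f_2)(e^{2i\pi/m}\omega)=G_j(\Omega,f_1,f_2)(\omega)$, which forces the Fourier support of $G_j(\Omega,f_1,f_2)$ into $m\N$, hence into $Y^{k-1}_m$.

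For (2) and (3) the plan is to use Proposition \ref{Matrice}: on $X^{k+\log}_m\times X^{k+\log}_m$ the operator $DG(\Omega,0,0)$ acts as a Fourier block-multiplier sending the input mode of frequency $nm-1$ to the output mode $\omega^{nm}-\overline\omega^{nm}$ with matrix coefficient $\tfrac{i}{2}\,nm\,M_{nm}$, $n\ge1$. The hypothesis $\Omega\in\{\Omega_m^\pm\}$ means exactly $\det M_m=0$, equivalently $\lambda=1-2\Omega\in\{\lambda_m^\pm\}$, while Proposition \ref{vp}(3) gives $\lambda_{nm}^-<\lambda_m^-\le\lambda_m^+<\lambda_{nm}^+$ for every $n\ge2$, so by \eqref{dis} one gets $\det M_{nm}=\tfrac b4(\lambda-\lambda_{nm}^-)(\lambda-\lambda_{nm}^+)<0\ne0$. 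Hence $M_{nm}$ is invertible for $n\ge2$, the kernel is carried entirely by the block $n=1$, where $\det M_m=0$ and $M_m\ne0$ force $\operatorname{rk}M_m=1$; reading a kernel vector off the second row of $M_m$ gives $(\Omega+\tfrac{S_m}{b}-\Lambda_1(b),\,-\Lambda_m(b))$, nonzero because $\Lambda_m(b)>0$ by the integral representation in Proposition \ref{vp}(1), and it is a single Fourier mode, hence trivially in $X^{k+\log}_m$. This is $v_{0,m}$, proving (2). For (3), the same block structure identifies $\operatorname{Im}DG(\Omega,0,0)$ with the set of $(g_1,g_2)\in Y^{k-1}_m\times Y^{k-1}_m$ whose mode-$m$ coefficient lies in the codimension-one subspace $\operatorname{Im}M_m$ of $\R^2$; closedness follows from the uniform bound $\|M_{nm}^{-1}\|\to0$ as $n\to\infty$, itself a consequence of $\det M_{nm}\sim-\tfrac1b S_{nm}^2$ against adjugate entries of size $O(S_{nm})$, together with the norm equivalence of Lemma \ref{caracterisation}.

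Assertion (4) is then a short computation. Keeping $v_{0,m}$ fixed and using that $DG(\Omega,0,0)$ is affine in $\Omega$ with $\partial_\Omega M_m=\operatorname{diag}(1,b)$, Proposition \ref{Matrice} yields $\partial_\Omega DG(\Omega,0,0)v_{0,m}=\tfrac{i}{2}m\,(\Omega+\tfrac{S_m}{b}-\Lambda_1(b),\,-b\Lambda_m(b))(\omega^m-\overline\omega^m)$, so by the description of the range in (3) it suffices to show that the vector $(\Omega+\tfrac{S_m}{b}-\Lambda_1(b),\,-b\Lambda_m(b))$ is not in $\operatorname{Im}M_m$. A vector spanning $\ker M_m^{\top}=(\operatorname{Im}M_m)^{\perp}$ is $(b\Lambda_m(b),\,-(\Omega-S_m+b^2\Lambda_1(b)))$; pairing it with the previous vector and simplifying by means of $\tilde C_m=(1-\tfrac1b)S_m+(1-b^2)\Lambda_1(b)$ and $2\Omega-\tilde C_m=\pm\sqrt{\Delta_m}$ gives $b\Lambda_m(b)(2\Omega-\tilde C_m)=\pm\,b\Lambda_m(b)\sqrt{\Delta_m}$, which is nonzero exactly when $\Delta_m>0$. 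This establishes the transversality condition for simple eigenvalues, and Theorem \ref{existence} then follows by invoking Crandall--Rabinowitz's Theorem \ref{CR} at $(\Omega_m^\pm,0,0)$ on the spaces $X^{k+\log}_m\times X^{k+\log}_m$ and $Y^{k-1}_m\times Y^{k-1}_m$.

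I expect the main obstacle to be the closedness and codimension statement in (3): converting the formal block-diagonal picture into a genuine topological statement requires the uniform invertibility bound on the blocks $M_{nm}$, which rests on the asymptotics $S_n\sim\log n$ and the explicit determinant formula \eqref{dis}, and must be read against the weighted $\ell^2$ norms on $X^{k+\log}_m$ and $Y^{k-1}_m$ supplied by Lemma \ref{caracterisation}. Everything else is either quoted verbatim from Propositions \ref{biendef}, \ref{regularite}, \ref{Matrice} and \ref{vp}, or is elementary $2\times2$ linear algebra on the individual Fourier blocks.
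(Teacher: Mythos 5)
Your proposal is correct and follows the paper's proof almost step for step: part (1) by the same change of variables $\tau\mapsto e^{2i\pi/m}\xi$ giving $S(\Phi_i,\Phi_j)(e^{2i\pi/m}\omega)=e^{2i\pi/m}S(\Phi_i,\Phi_j)(\omega)$, parts (2) and (3) by the Fourier block picture of Proposition \ref{Matrice}, the non-vanishing of $\det M_{nm}$ for $n\ge2$ from the eigenvalue monotonicity of Proposition \ref{vp}, and the solvability of $DG(\Omega,0,0)(h_1,h_2)=(g_1,g_2)$ with the explicit $M_{nm}^{-1}$, where the gain $|\det M_{nm}|\sim S_{nm}^2$ (the constant you write, $\tfrac1b$, is immaterial) absorbs the extra $(1+\log(nm))$ weight of $X^{k+\log}_m$ in Lemma \ref{caracterisation} --- exactly the paper's $Z_m$ argument. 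The only genuine divergence is in (4): the paper argues by contradiction, writing the transversality vector as $M_m(a_1,c_1)^{T}$, deducing the proportionality condition \eqref{trans} and combining it with $\det M_m=0$ to force either $\Lambda_m(b)=0$ or a double eigenvalue; you instead pair the vector $\bigl(\Omega+\tfrac{S_m}{b}-\Lambda_1(b),\,-b\Lambda_m(b)\bigr)$ against the explicit cokernel vector $\bigl(b\Lambda_m(b),\,-(\Omega-S_m+b^2\Lambda_1(b))\bigr)$ spanning $\ker M_m^{\top}$ and obtain the pairing $\pm\,b\Lambda_m(b)\sqrt{\Delta_m}$, which is nonzero precisely when $\Delta_m>0$. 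The two are equivalent pieces of $2\times2$ linear algebra, but your version is slightly more direct and makes transparent why transversality is exactly the simplicity condition, whereas the paper's reductio reaches the same dichotomy through the auxiliary identity \eqref{trans}.
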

\begin{proof}
$(1)$ Compared to Theorems $\ref{biendef}$ and $\ref{regularite}$, we just need to check that $G=(G_1,G_2)$ preserves the $m$-fold symmetry and maps $X_m^{k+\log} \times X_m^{k+\log}$ into $Y^{k-1}_m \times Y^{k-1}_m$. To this end, it is sufficient to check that for given $(f_1,f_2)\in X_m^{k+\log} \times X_m^{k+\log}$ the Fourier coefficients of $\tilde{G}_j(\Omega,\Phi_1,\Phi_2)$ vanish at frequencies which are not integer multiple of $m$.  This amounts to proving that,
\[\tilde{G}_j(\Omega,\Phi_1,\Phi_2)(e^{i\frac{2\pi}{m}}\omega)=\tilde{G}_j(\Omega,\Phi_1,\Phi_2)(\omega) \text{, } \forall \omega \in \T \text{, }j=1,2. \]
As
\begin{equation}\label{mder}
\Phi_j'(e^{\frac{2i\pi}{m}}\omega)=\Phi_j'(\omega),
 \end{equation}
the property is obvious for the first term $\textnormal{Im} \lbrace \Omega\overline{\omega}\overline{\Phi_j'(\omega)}\Phi_j(\omega) \rbrace$. For the two last terms of $\tilde{G}_j$ it is enough to check the identity,
 \[ \forall \omega \in \T, \; S(\Phi_i,\Phi_j)(e^{\frac{2i\pi}{m}} \omega)=e^{\frac{2i\pi}{m}}S(\Phi_i,\Phi_j)( \omega).\]
 This follows easily by making the change of variables $\tau =e^{\frac{2i\pi}{m}} \xi$ and from $(\ref{mprem})$ and $(\ref{mder})$, 
\begin{align*}
 S(\Phi_i,\Phi_j)(e^{\frac{2i\pi}{m}} \omega)&=\fint_{\T} \frac{e^{\frac{2i\pi}{m}} \xi \Phi_i'(e^{\frac{2i\pi}{m}} \xi)-e^{\frac{2i\pi}{m}} \omega \Phi_j'(e^{\frac{2i\pi}{m}}\omega)}{\vert \Phi_i(e^{\frac{2i\pi}{m}} \xi)-\Phi_j(e^{\frac{2i\pi}{m}}\omega)\vert }\frac{d\xi}{\xi}\\
 &=e^{\frac{2i\pi}{m}} \fint_{\T} \frac{ \xi \Phi_i'(\xi)- \omega \Phi_j'(\omega)}{\vert \Phi_i( \xi)-\Phi_j(\omega)\vert }\frac{d\xi}{\xi}\\
 &=e^{\frac{2i\pi}{m}} S(\Phi_i,\Phi_j)( \omega).
\end{align*}
This concludes the proof of the following statement,
\[ \forall\,(f_1,f_2)\in V_{m,r},\quad G(\Omega,f_1,f_2) \in Y_m^{k-1} \times Y^{k-1}_m. \]
$(2)$ We shall describe the kernel of linear operator $DG(\Omega_m^{\pm},0,0)$ and show that it is one-dimensional. Let $h_1,h_2$ be two functions in $X^{k+\log}_m$ such that 
\begin{equation}\label{fourier}
h_1(\omega)=\sum_{n=1}^{+ \infty}a_n\overline{\omega}^{nm-1} \text{ and }h_2(\omega)=\sum_{n=1}^{+ \infty}c_n\overline{\omega}^{nm-1}.
\end{equation}
Recall from Proposition $\ref{Matrice}$ the following expression,
\begin{equation}\label{diff2}
DG(\Omega,0,0)(h_1,h_2)=\frac{i}{2}\sum_{n\geq1}nmM_{nm} \left(\begin{array}{c}
a_n \\ 
c_n
\end{array}  \right)(\omega^{nm}-\overline{\omega}^{nm})
\end{equation}
where the matrice $M_n$ is given for $n\geq 2$ by :
\[ M_n= \left(\begin{array}{cc}
\Omega-S_n+b^2\Lambda_1(b) & -b^2\Lambda_n(b) \\ 
b\Lambda_n(b) & b\Omega+S_n-b\Lambda_1(b)
\end{array} \right).\]
Now if $\Omega\in \{ \Omega_m^{\pm}\}$ then
\[\textnormal{det}(M_m)=0.\]
Thus, the kernel of $DG(\Omega,0,0)$ is non trivial and is one-dimensional if and only if:
\[ \textnormal{det}(M_{nm}) \neq 0 \text{, } \forall n \geq 2.\]
This condition is ensured by Proposition $\ref{vp}$. Hence we have the equivalence:
\begin{equation}\label{equiv}
 (h_1,h_2) \in \textnormal{Ker}(DG(\Omega,0,0))\quad \textnormal{ if and only if }\quad  a_n=c_n=0 \; \forall n \geq 2 \text{ and } (a_1,c_1)\in \textnormal{Ker}(M_m)
 \end{equation}
Hence, a generator of $\textnormal{Ker}\left( DG(\Omega,0,0)\right)$ can be chosen as the pair of functions
\[ \omega\in \T \mapsto \left(\begin{array}{c}
\Omega+\frac{S_m}{b}-\Lambda_1(b) \\ 
-\Lambda_m(b)
\end{array} \right) \overline{\omega}^{m-1}.\]
$(3)$ We introduce 
\begin{align*}
Z_m=&\left\lbrace g=(g_1,g_2)\in Y^{k-1}_m \times Y^{k-1}_m | g(\omega)=\sum_{n \geq 1} \left(\begin{array}{c}
A_n \\ 
C_n
\end{array}\right) (\omega^{nm}-\overline{\omega}^{nm}), \, \forall \omega \in \T \right.\\
&\left. \text{s.t.} \,\,(A_n,C_n)\in \R^2 \text{ }\forall n\geq 2 \text{ and } \exists(a_1,c_1)\in \R^2 \text{ with  }M_m\left( \begin{array}{c}
a_1 \\ 
c_1
\end{array} \right)=\left( \begin{array}{c}
A_1 \\ 
C_1
\end{array} \right)  \right\rbrace.
\end{align*}
$Z_m$ is closed  and of codimension 1 in $Y^{k-1}_m \times Y^{k-1}_m$.
The following inclusion is obvious
\[\textnormal{Im}(DG(\Omega,0,0))\subset Z_m. \]
Therefore it remains just to check the converse. Let $(g_1,g_2)\in Z_m$, we shall  prove that the equation :
\[DG(\Omega,0,0)(h_1,h_2)=(g_1,g_2)\]
admits a solution $(h_1,h_2)\in X^{k+\log}_m\times X^{k+\log}_m$ where the Fourier expansions of these functions are given in $(\ref{fourier})$. According to $(\ref{diff})$, the preceding equation is equivalent to
\[ nm M_{nm}\left( \begin{array}{c}
a_n \\ 
c_n
\end{array} \right)=\left( \begin{array}{c}
A_n \\ 
C_n
\end{array} \right)  \text{, } \forall n \in \N^{\star}.\]
For $n=1$, the existence follows from the condition of space $Z_m$ and therefore we shall only focus \mbox{on
 $ n \geq 2 $.} Owing to $(\ref{equiv})$ the sequences $(a_n)_{n \geq 2}$ and $(c_n)_{n \geq 2}$ are uniquely determined by the formula
 \[ \left( \begin{array}{c}
a_n \\ 
c_n
\end{array} \right)=\frac{1}{nm} M_{nm}^{-1} \left( \begin{array}{c}
A_n \\ 
C_n
\end{array} \right)  \text{, } \forall n \geq2.\] 
 By computing the matrix $M_{nm}^{-1}$ we deduce that for all $n \geq 2$,
\[ \left\lbrace\begin{array}{lcl}
a_n&=&\frac{b(\Omega+\frac{1}{b}S_{nm}-\Lambda_1(b))}{mn\textnormal{det}(M_{nm})}A_n+\frac{b^2\Lambda_{nm}(b)}{mn\textnormal{det}(M_{nm})}C_n\\
c_n&=&-\frac{b\Lambda_{nm}(b)}{mn\textnormal{det}(M_{nm})}A_n+\frac{(\Omega-{b}S_{nm}+b^2\Lambda_1(b))}{mn \textnormal{det}(M_{nm})}C_n
\end{array}\right. .
\]
We just need to check that  $(h_1,h_2)\in  X^{k+\log}_m \times X^{k+\log}_m$. We shall develop the computations only for $h_1$ since the same analysis can be applied to $h_2$. By using  the characterization given by Lemma $\ref{caracterisation}$ one writes 
\begin{align*}
\|h_1\|_{X^{k+\log}}^2&\approx |a_1|^2+
\sum_{n=2}^{+ \infty} \frac{(mn)^{2k}}{\varepsilon^{2(nm+k-1)}} \left(1+\textnormal{log}(nm)\right)^2 \left[\frac{b\left(\Omega+\frac{1}{b}S_{nm}-\Lambda_1(b)\right)}{mn\textnormal{det}(M_{nm})}A_n+\frac{b^2\Lambda_{nm}(b)}{mn\textnormal{det}(M_{nm})}C_n \right]^2\\
& \lesssim |a_1|^2+  \sum_{n=2}^{+ \infty} \frac{(mn)^{2(k-1)}}{\varepsilon^{2(nm+k-1)}}\frac{\left(1+\textnormal{log}(nm)\right)^2}{\textnormal{det}(M_{nm})^2} \left[S_{nm}^2 A_n^2+\Lambda_{nm} (b)^2 C_n^2 \right] \\
& \lesssim |a_1|^2+ \sum_{n=2}^{+ \infty}  \frac{(mn)^{2(k-1)}}{\varepsilon^{2(nm+k-1)}} \left(A_n^2+C_n^2 \right)  \\
&\lesssim   \Vert g_1 \Vert_{Y^{k-1}_m} + \Vert g_2 \Vert_{Y^{k-1}_m}.
\end{align*}
We have used the asymptotics $S_{nm}\thicksim \textnormal{log}(nm)$ and $|\textnormal{det}(M_{nm})| \thicksim S_{nm}^2$.\\
$(4)$
We have
\[\partial_{\Omega}DG(\Omega_m^{\pm},0,0)v_{0,m}=\frac{im}{2}\left(\begin{array}{c}
\Omega+\frac{S_m}{b}-\Lambda_1(b) \\ 
-b \Lambda_m(b)
\end{array}  \right) (\omega^m-\overline{\omega}^m).\]
 We resort to reductio  ad absurdum and we suppose that
\[\partial_{\Omega}DG(\Omega_m^{\pm},0,0)v_{0,m}\in \textnormal{Im}(DG(\Omega_m^{\pm},0,0)).\]
Then there exists $(a_1,c_1)\in \R^2$ such that
\[ \left(\begin{array}{c}
\Omega+\frac{S_m}{b}-\Lambda_1(b) \\ 
-b\Lambda_m(b)
\end{array}  \right) =M_m \left( \begin{array}{c}
a_1 \\ 
c_1
\end{array} \right).\]

As $M_m$  has  a one-dimension kernel, $\left(\begin{array}{c}
\Omega+\frac{S_m}{b}-\Lambda_1(b) \\ 
-\Lambda_m(b)
\end{array}  \right)$ will be a scalar multiple of one column of the matrix $M_m$ which happens if and only if
\begin{equation}\label{trans}
 \big(\Omega+S_m-\Lambda_1(b)\big)^2-b^2\Lambda_m(b)^2=0.
\end{equation}
Combining this equation with $\textnormal{det}(M_m)=0$, we get
\[\big(\Omega-S_m+b^2\Lambda_1(b) \big) \left(\Omega+\frac{S_m}{b}-\Lambda_1(b)\right)+\left(\Omega+\frac{S_m}{b}-\Lambda_1(b)\right)^2=0.\]
This yields
\[\left(\Omega+\frac{S_m}{b}-\Lambda_1(b)\right) \left(2\Omega+(b^2-1)\Lambda_1(b)+\left(-1+\frac{1}{b}\right)S_m\right)=0\]
which is equivalent to
\[\Omega+\frac{S_m}{b}-\Lambda_1(b)=0 \text{ ou } \Omega=\frac{1}{2}\left((1-b^2)\Lambda_1(b)+(1-\frac{1}{b})S_m\right).\]
This first possibility is excluded by $(\ref{trans})$ because  $\Lambda_m(b)\neq 0$ and the second one is also impossible because it corresponds to a double  eigenvalue which is not also the case here. We obtain an absurdity and this concludes the proof of Theorem $\ref{appli}$.
\end{proof}

\end{document}